\title{Prop profile of bi-Hamiltonian structures}
\author{Henrik Strohmayer}
\address{Department of Mathematics \\ Stockholm University \\ 106 91 Stockholm \\ Sweden}
\email{henriks@math.su.se}
\newcommand{\N}{{\ensuremath{\mathbb N}}}
\newcommand{\Z}{{\ensuremath{\mathbb Z}}}
\newcommand{\R}{{\ensuremath{\mathbb R}}}
\newcommand{\K}{{\ensuremath{\mathbb K}}}
\newcommand{\Bcal}{\ensuremath{\mathcal{B}}}
\newcommand{\Ccal}{\ensuremath{\mathcal{C}}}
\newcommand{\Jcal}{\ensuremath{\mathcal{J}}}
\newcommand{\Mcal}{{\ensuremath{\mathcal{M}}}}
\newcommand{\Ocal}{\ensuremath{\mathcal{O}}}
\newcommand{\Pcal}{\ensuremath{\mathcal{P}}}
\newcommand{\Qcal}{\ensuremath{\mathcal{Q}}}
\newcommand{\Tcal}{\ensuremath{\mathcal{T}}}
\newcommand{\Ucal}{\ensuremath{\mathcal{U}}}
\newcommand{\gfrak}{{\ensuremath{\mathfrak{g}}}}
\newcommand{\Gfrak}{{\ensuremath{\mathfrak{G}}}}
\newcommand{\G}{\Gamma}
\newcommand{\Com}{\mathcal{C}om}
\newcommand{\Lie}{\mathcal{L}ie}
\newcommand{\Lietwo}{\lc{\Lie}} 
\newcommand{\Lieone}{\Lie^1}
\newcommand{\Comone}{\Com^1}
\newcommand{\Colie}{\mathcal{C}o\mathcal{L}ie}
\newcommand{\Colietwo}{\lc{\Colie}}
\newcommand{\End}{\mathcal{E}nd}
\newcommand{\Liebi}{\Lie \Bcal i}
\newcommand{\Lieonebi}{\Lieone \Bcal i}
\newcommand{\Lietwoonebi}{\Lieone_2 \Bcal i}
\newcommand{\Poisson}{\Lieonebi} 
\newcommand{\Bipoisson}{\Lietwoonebi}
\newcommand{\Poissonalg}{Lie $1$-bialgebra}
\newcommand{\Poissonalgs}{Lie $1$-bialgebras}
\newcommand{\Bipoissonalg}{$\text{Lie}_2$ $1$-bialgebra}
\newcommand{\s}{\ensuremath{\mathbb{S}}}
\newcommand{\smodule}{\ensuremath{\s}\text{-module}}
\newcommand{\sbimodule}{\ensuremath{\s}\text{-bimodule}}
\newcommand{\Free}{\mathcal{F}}
\newcommand{\QO}[2]{\Free(#1)/ ( #2 )}
\newcommand{\p}{\partial}
\newcommand{\ddt}[1]{\frac{\p\hspace{8pt}}{\p t^{#1}}}
\newcommand{\ol}[1]{{\ensuremath{\overline{#1}}}}
\newcommand{\olp}{\ensuremath{\overline{p}}}
\newcommand{\olq}{\ensuremath{\overline{q}}}
\newcommand{\wtp}{\ensuremath{\widetilde{p}}}
\newcommand{\Qf}{\hat{Q}}
\newcommand{\Pf}{\hat{P}}
\newcommand{\Pfa}{\hat{P}_1}
\newcommand{\Pfb}{\hat{P}_2}
\newcommand{\bvfs}{\wedge^2\Tcal_V}
\newcommand{\pvfs}{\wedge^\bullet\Tcal_V}
\newcommand{\pvfsh}{\wedge^\bullet\Tcal_V\llbracket\hslash\rrbracket}
\newcommand{\lc}[1]{#1^2}
\newcommand{\cz}{\scriptscriptstyle{\lor}}
\newcommand{\antishriek}{\text{\normalfont{<}}}
\DeclareMathOperator{\coker}{coker}
\DeclareMathOperator{\Ho}{H}
\DeclareMathOperator{\Hom}{Hom}
\DeclareMathOperator{\sgn}{sgn}
\DeclareMathOperator{\inp}{in}
\DeclareMathOperator{\outp}{out}
\DeclareMathOperator{\Bij}{Bij}
\DeclareMathOperator{\internal}{int}
\DeclareMathOperator{\connecting}{con}
\DeclareMathOperator{\op}{op}
\DeclareMathOperator{\Id}{Id}
\DeclareMathOperator{\ELL}{L}
\DeclareMathOperator{\CBC}{\Omega}
\DeclareMathOperator{\BC}{B}
\DeclareMathOperator{\dioperad}{dioperad}
\DeclareMathOperator{\properad}{properad}
\DeclareMathOperator{\propp}{prop}
\DeclareMathOperator{\wheeledpropp}{wheeled prop}
\newtheorem*{lem*}{Lemma}
\newtheorem{prop}[subsubsection]{Proposition}
\newtheorem*{prop*}{Proposition}
\newtheorem{thm}[subsubsection]{Theorem}
\newtheorem*{thm*}{Theorem}
\newtheorem{cor}[subsubsection]{Corollary}
\newtheorem{thmalpha}{Theorem}
\theoremstyle{definition}
\newtheorem{rem}[subsubsection]{Remark}
\newtheorem*{rem*}{Remark}
\newtheorem*{rems*}{Remarks}
\newtheorem*{defn*}{Definition}
\newtheorem{ex}[subsubsection]{Example}
\newtheorem*{ex*}{Example}
\newcommand{\bbone}{1\hspace{-2.6pt}\mbox{\normalfont{l}}}
\newcommand{\half}{{\ensuremath{\frac{1}{2}}}}
\newcommand{\iso}{\cong}
\newcommand{\niso}{\ncong}
\newcommand{\isoto}{\stackrel{\vspace{10pt}\sim}{\to}}
\newcommand{\isomapsto}{\stackrel{\vspace{10pt}\sim}{\mapsto}}
\newcommand{\bhs}{\hspace{10pt}}
\newcommand{\shs}{\hspace{5pt}}
\newcommand{\leftsub}[2]{{\vphantom{#2}}_{#1}{#2}}
\newcommand{\twovertexgraph}{\ensuremath{
 \xygraph{
!{<0pt,0pt>;<10pt,0pt>:<0pt,8pt>::}
!{(1,-0.6)}*{v_2 }
!{(1,-4.6)}*{v_1 }
!{(-6,5)}*{\scriptstyle i}
!{(-3,5)}*{\scriptstyle i+1 }
!{(3,5)}*{\scriptstyle i+n_1-2 \hspace {3pt}}
!{(6,5)}*{\scriptstyle \hspace{9pt} i+n_1-1 }
!{(-6,4)}="a"
!{(-3,4)}="b"
!{(0,4)}*{... }="c"
!{(3,4)}="d"
!{(6,4)}="e"
!{(-12,1)}*{\scriptstyle 1 }
!{(-9,1)}*{\scriptstyle 2 }
!{(-3,1)}*{\scriptstyle i-1 }
!{(3,1)}*{\scriptstyle \hspace{6pt} i+n_1 }
!{(9,1)}*{\scriptstyle  n_1+n_2-2 \hspace{9pt}}
!{(12,1)}*{\scriptstyle \hspace{15pt} n_1+n_2-1 }
!{(-12,0)}="ff"
!{(-9,0)}="f"
!{(-6,0,)}*{... }="g"
!{(-3,0)}="h"
!{(0,0)}="i"
!{(3,0)}="j"
!{(6,0)}*{... }="k"
!{(9,0)}="l"
!{(12,0)}="ll"
!{(0,-4)}="m"
!{(0,-8)}="n"
"a"-"i"
"b"-"i"
"d"-"i"
"e"-"i"
"ff"-"m"
"f"-"m"
"h"-"m"
"i"-"m"
"j"-"m"
"l"-"m"
"ll"-"m"
"m"-"n"
}
}}
\newcommand{\GraphG}{\ensuremath{
 \xygraph{
!{<0pt,0pt>;<6pt,0pt>:<0pt,6pt>::}
!{(-3.5,4)}*{ v_1}
!{(3.5,0)}*{ v_2}
!{(-3.5,-4)}*{ v_3}
!{(-5,7)}="a"
!{(-2,7)}="b"
!{(1,7)}="c"
!{(-2,4)}="d"
!{(5,3)}="e"
!{(-5,1)}="f"
!{(2,0)}="g"
!{(5,-3)}="h"
!{(-2,-4)}="i"
!{(-5,-7)}="j"
!{(1,-7)}="k"
"a"-"d"
"b"-"d"
"c"-"d"
"d"-"f"
"d"-@/^3pt/"i"
"d"-@/_3pt/"i"
"d"-@/^3pt/"g"
"d"-@/_3pt/"g"
"e"-"g"
"g"-@/_3pt/"i"
"g"-@/^3pt/"i"
"g"-"h"
"i"-"j"
"i"-"k"
}
}}
\newcommand{\GraphHb}{\ensuremath{
 \xygraph{
!{<0pt,0pt>;<6pt,0pt>:<0pt,6pt>::}
!{(-3.5,4)}*{v_1}
!{(-3.5,-4)}*{v_3}
!{(-5,7)}="a"
!{(-2,7)}="b"
!{(1,7)}="c"
!{(-2,4)}="d"
!{(-5,1)}="f"
!{(0,1)}="ga"
!{(2,1)}="gb"
!{(0,-1)}="gc"
!{(2,-1)}="gd"
!{(5,-3)}="h"
!{(-2,-4)}="i"
!{(-5,-7)}="j"
!{(1,-7)}="k"
"a"-"d"
"b"-"d"
"c"-"d"
"d"-"f"
"d"-@/^3pt/"i"
"d"-@/_3pt/"i"
"d"-"ga"
"d"-"gb"
"gc"-"i"
"gd"-"i"
"i"-"j"
"i"-"k"
}
}}
\newcommand{\GraphGHb}{\ensuremath{
 \xygraph{
!{<0pt,0pt>;<6pt,0pt>:<0pt,6pt>::}
!{(-6,0)}*{v_{H}}
!{(5.5,0)}*{v_2}
!{(-7,3)}="a"
!{(-4,3)}="b"
!{(-1,3)}="c"
!{(7,3)}="d"
!{(-4,0)}="e"
!{(4,0)}="f"
!{(-7,-3)}="g"
!{(-4,-3)}="h"
!{(-1,-3)}="i"
!{(7,-3)}="j"
"a"-"e"
"b"-"e"
"c"-"e"
"d"-"f"
"e"-@/^8pt/"f"
"e"-@/^3pt/"f"
"e"-@/_3pt/"f"
"e"-@/_8pt/"f"
"e"-"g"
"e"-"h"
"e"-"i"
"f"-"j"
}
}}
\newcommand{\GraphGnumbered}{\ensuremath{
 \xygraph{
!{<0pt,0pt>;<6pt,0pt>:<0pt,6pt>::}
!{(-5,8)}*{\scriptstyle 1}
!{(-2,8)}*{\scriptstyle 3}
!{(1,8)}*{\scriptstyle 4}
!{(5,4)}*{\scriptstyle 2}
!{(-5,0)}*{\scriptstyle 2}
!{(5,-4)}*{\scriptstyle 1}
!{(-5,-8)}*{\scriptstyle 3}
!{(1,-8)}*{\scriptstyle 4}
!{(-5,7)}="a"
!{(-2,7)}="b"
!{(1,7)}="c"
!{(-2,4)}="d"
!{(5,3)}="e"
!{(-5,1)}="f"
!{(2,0)}="g"
!{(5,-3)}="h"
!{(-2,-4)}="i"
!{(-5,-7)}="j"
!{(1,-7)}="k"
"a"-"d"
"b"-"d"
"c"-"d"
"d"-"f"
"d"-@/^3pt/"i"
"d"-@/_3pt/"i"
"d"-@/^3pt/"g"
"d"-@/_3pt/"g"
"e"-"g"
"g"-@/_3pt/"i"
"g"-@/^3pt/"i"
"g"-"h"
"i"-"j"
"i"-"k"
}
}}
\newcommand{\GraphsigmaGtau}{\ensuremath{
 \xygraph{
!{<0pt,0pt>;<6pt,0pt>:<0pt,6pt>::}
!{(-5,8)}*{\scriptstyle \tau^{-1}(1)\hspace{22pt}}
!{(-2,8)}*{\scriptstyle \tau^{-1}(3)}
!{(1,8)}*{\scriptstyle \hspace{22pt}\tau^{-1}(4)}
!{(5,4)}*{\scriptstyle \tau^{-1}(2)}
!{(-5,0)}*{\scriptstyle \sigma(2)}
!{(5,-4)}*{\scriptstyle \sigma(1)}
!{(-5,-8)}*{\scriptstyle \sigma(3)}
!{(1,-8)}*{\scriptstyle \sigma(4)}
!{(-5,7)}="a"
!{(-2,7)}="b"
!{(1,7)}="c"
!{(-2,4)}="d"
!{(5,3)}="e"
!{(-5,1)}="f"
!{(2,0)}="g"
!{(5,-3)}="h"
!{(-2,-4)}="i"
!{(-5,-7)}="j"
!{(1,-7)}="k"
"a"-"d"
"b"-"d"
"c"-"d"
"d"-"f"
"d"-@/^3pt/"i"
"d"-@/_3pt/"i"
"d"-@/^3pt/"g"
"d"-@/_3pt/"g"
"e"-"g"
"g"-@/_3pt/"i"
"g"-@/^3pt/"i"
"g"-"h"
"i"-"j"
"i"-"k"
}
}}
\newcommand{\econinout}{\ensuremath{
 \xygraph{
!{<0pt,0pt>;<10pt,0pt>:<0pt,10pt>::}
!{(-1,3)}*{ w_i}
!{(-1,-3)}*{w_j}
!{(0.7,2)}*{ e_i}="a"
!{(0.7,0)}*{ e}="b"
!{(0.7,-2)}*{ e_j}="c"
!{(1.5,2)}="aa"
!{(1.5,0.2)}="bbm"
!{(1.5,-0.2)}="bbn"
!{(1.5,-2)}="cc"
!{(-0.5,2)}="aaa"
!{(-0.5,0.2)}="bbbm"
!{(-0.5,-0.2)}="bbbn"
!{(-0.5,-2)}="ccc"
!{(0,3)}*{\bullet }="d"
!{(0,1.2)}="e"
!{(0,0.8)}="f"
!{(0,-0.8)}="g"
!{(0,-1.2)}="h"
!{(0,-3)}*{\bullet }="i"
"aa"-@/^3pt/@{->}"bbm"^{\connecting}
"bbn"-@/^3pt/@{->}"cc"^{\outp}
"ccc"-@/^3pt/@{->}"bbbn"^{\connecting}
"d"-"e"
"f"-"g"
"h"-"i"
}
}}
\newcommand{\XX}[5]{\ensuremath{
 \xygraph{
!{<0pt,0pt>;<4pt,0pt>:<0pt,4pt>::}
!{(-3,3.5)}*{\scriptscriptstyle #1}
!{(1,3.5)}*{\scriptscriptstyle #2}
!{(-3,2.5)}="a"
!{(1,2.5)}="b"
!{(3,1.5)}*{\scriptscriptstyle #3}
!{(-1,0.5)}*{\scriptscriptstyle #4}="c"
!{(3,0.5)}="d"
!{(1,-1.5)}*{\scriptscriptstyle #5}="e"
!{(1,-3.5)}="f"
"a"-"c"
"b"-"c"
"c"-"e"
"d"-"e"
"e"-"f"
}
}}
\newcommand{\YY}[3]{\XX{#1}{#2}{#3}{{}}{{}}}
\newcommand{\XXb}[6]{\ensuremath{
 \xygraph{
!{<0pt,0pt>;<4pt,0pt>:<0pt,4pt>::}
!{(-3,3.5)}*{\scriptscriptstyle #1}
!{(1,3.5)}*{\scriptscriptstyle #2}
!{(-3,2.5)}="a"
!{(1,2.5)}="b"
!{(3,1.5)}*{\scriptscriptstyle #3}
!{(-1,0.5)}*{\scriptscriptstyle #4}="c"
!{(3,0.5)}="d"
!{(1,-1.5)}*{\scriptscriptstyle #5}="e"
!{(1,-3.5)}="f"
!{(1,-4.5)}*{\scriptscriptstyle #6}
"a"-"c"
"b"-"c"
"c"-"e"
"d"-"e"
"e"-"f"
}
}}
\newcommand{\YYb}[3]{\XXb{#1}{#2}{#3}{{}}{{}}{1}}
\newcommand{\coXcoX}[5]{\ensuremath{
 \xygraph{
!{<0pt,0pt>;<4pt,0pt>:<0pt,4pt>::}
!{(1,3.5)}="a"
!{(1,1.5)}*{\scriptscriptstyle #4}="b"
!{(-1,-0.5)}*{\scriptscriptstyle #5}="c"
!{(3,-0.5)}="d"
!{(3,-1.5)}*{\scriptscriptstyle #3}
!{(-3,-2.5)}="e"
!{(1,-2.5)}="f"
!{(-3,-3.5)}*{\scriptscriptstyle #1}
!{(1,-3.5)}*{\scriptscriptstyle #2}
"a"-"b"
"b"-"c"
"b"-"d"
"c"-"e"
"c"-"f"
}
}}
\newcommand{\coYcoY}[3]{\coXcoX{#1}{#2}{#3}{{}}{{}}}
\newcommand{\coWcoW}[3]{\coXcoX{#1}{#2}{#3}{\circ}{\circ}}
\newcommand{\coWcoB}[3]{\coXcoX{#1}{#2}{#3}{\circ}{\bullet}}
\newcommand{\coBcoW}[3]{\coXcoX{#1}{#2}{#3}{\bullet}{\circ}}
\newcommand{\coBcoB}[3]{\coXcoX{#1}{#2}{#3}{\bullet}{\bullet}}
\newcommand{\XcoX}[6]{\ensuremath{
 \xygraph{
!{<0pt,0pt>;<4pt,0pt>:<0pt,4pt>::}
!{(-2,4)}*{\scriptscriptstyle #1}
!{(2,4)}*{\scriptscriptstyle #2}
!{(-2,3)}="a"
!{(2,3)}="b"
!{(0,1)}*{\scriptscriptstyle #5}="c"
!{(0,-1)}*{\scriptscriptstyle #6}="d"
!{(-2,-3)}="e"
!{(2,-3)}="f"
!{(-2,-4)}*{\scriptscriptstyle #3}
!{(2,-4)}*{\scriptscriptstyle #4}
"a"-"c"
"b"-"c"
"c"-"d"
"d"-"e"
"d"-"f"
}
}}
\newcommand{\YcoY}[4]{\XcoX{#1}{#2}{#3}{#4}{{}}{{}}}
\newcommand{\YcoW}[4]{\XcoX{#1}{#2}{#3}{#4}{{}}{\circ}}
\newcommand{\YcoB}[4]{\XcoX{#1}{#2}{#3}{#4}{{}}{\bullet}}
\newcommand{\coXopX}[6]{\ensuremath{
 \xygraph{
!{<0pt,0pt>;<4pt,0pt>:<0pt,4pt>::}
!{(-1,4)}*{\scriptscriptstyle #1}
!{(-1,3)}="a"
!{(3,2)}*{\scriptscriptstyle #2}
!{(-1,1)}*{\scriptscriptstyle #5}="b"
!{(3,1)}="c"
!{(-3,-1)}="d"
!{(1,-1)}*{\scriptscriptstyle #6}="e"
!{(-3,-2)}*{\scriptscriptstyle #3}
!{(1,-3)}="f"
!{(1,-4)}*{\scriptscriptstyle #4}
"a"-"b"
"b"-"d"
"b"-"e"
"c"-"e"
"e"-"f"
}
}}
\newcommand{\coYopY}[4]{\coXopX{#1}{#2}{#3}{#4}{{}}{{}}}
\newcommand{\coWopY}[4]{\coXopX{#1}{#2}{#3}{#4}{\circ}{{}}}
\newcommand{\coBopY}[4]{\coXopX{#1}{#2}{#3}{#4}{\bullet}{{}}}
\newcommand{\Ysmall}{\ensuremath{
 \xygraph{
!{<0pt,0pt>;<2.5pt,0pt>:<0pt,-2.5pt>::}
!{(0,1.5)}="a"
!{(0,-0.5)}="b"
!{(-2,-2.5)}="c"
!{(2,-2.5)}="d"
"a"-"b"
"b"-"c"
"b"-"d"
}
}}
\newcommand{\Wsmall}{\ensuremath{
 \xygraph{
!{<0pt,0pt>;<2.5pt,0pt>:<0pt,-2.5pt>::}
!{(0,1.5)}="a"
!{(0,-0.5)}*{\scriptscriptstyle{\circ}}="b"
!{(-2,-2.5)}="c"
!{(2,-2.5)}="d"
"a"-"b"
"b"-"c"
"b"-"d"
}
}}
\newcommand{\Bsmall}{\ensuremath{
 \xygraph{
!{<0pt,0pt>;<2.5pt,0pt>:<0pt,-2.5pt>::}
!{(0,1.5)}="a"
!{(0,-0.5)}*{\scriptscriptstyle{\bullet}}="b"
!{(-2,-2.5)}="c"
!{(2,-2.5)}="d"
"a"-"b"
"b"-"c"
"b"-"d"
}
}}
\newcommand{\coYsmall}{\ensuremath{
 \xygraph{
!{<0pt,0pt>;<2.5pt,0pt>:<0pt,2.5pt>::}
!{(0,2.5)}="a"
!{(0,0.5)}="b"
!{(-2,-1.5)}="c"
!{(2,-1.5)}="d"
"a"-"b"
"b"-"c"
"b"-"d"
}
}}
\newcommand{\coWsmall}{\ensuremath{
 \xygraph{
!{<0pt,0pt>;<2.5pt,0pt>:<0pt,2.5pt>::}
!{(0,2.5)}="a"
!{(0,0.5)}*{\scriptscriptstyle{\circ}}="b"
!{(-2,-1.5)}="c"
!{(2,-1.5)}="d"
"a"-"b"
"b"-"c"
"b"-"d"
}
}}
\newcommand{\coBsmall}{\ensuremath{
 \xygraph{
!{<0pt,0pt>;<2.5pt,0pt>:<0pt,2.5pt>::}
!{(0,2.5)}="a"
!{(0,0.5)}*{\scriptscriptstyle{\bullet}}="b"
!{(-2,-1.5)}="c"
!{(2,-1.5)}="d"
"a"-"b"
"b"-"c"
"b"-"d"
}
}}
\newcommand{\YcoWcoBcorolla}{\ensuremath{
 \xygraph{
!{<0pt,0pt>;<4pt,0pt>:<0pt,4pt>::}
!{(-1,7)}="a"
!{(3,7)}="b"
!{(1,5)}="c"
!{(5,5)}="d"
!{(1.5,4.5)}*{.}="cea"
!{(2,4)*{.}}="ceb"
!{(2.5,3.5)}*{.}="cec"
!{(3,3)}="e"
!{(3,1)}*{\scriptscriptstyle{\circ}}="f"
!{(2.5,0.5)}*{.}="fga"
!{(2,0)}*{.}="fgb"
!{(1.5,-0.5)}*{.}="fgc"
!{(1,-1)}*{\scriptscriptstyle{\circ}}="g"
!{(5,-1)}="h"
!{(-1,-3)}*{\scriptscriptstyle{\bullet}}="i"
!{(-1.5,-3.5)}*{.}="ika"
!{(-2,-4)}*{.}="ikb"
!{(-2.5,-4.5)}*{.}="ikc"
!{(3,-3)}="j"
!{(-3,-5)}*{\scriptscriptstyle{\bullet}}="k"
!{(1,-5)}="l"
!{(-5,-7)}="m"
!{(-1,-7)}="n"
!{(-1,8)}*{\scriptscriptstyle{1}}="A"
!{(3,8)}*{\scriptscriptstyle{2}}="B"
!{(5,6)}*{\scriptscriptstyle{n}}="D"
!{(-5,-8)}*{\scriptscriptstyle{1}}="M"
!{(-1,-8)}*{\scriptscriptstyle{2}}="N"
!{(1,-6)}*{\scriptscriptstyle{i+1}}="L"
!{(3,-4)}*{\scriptscriptstyle{i+2}}="J"
!{(5,-2)}*{\scriptscriptstyle{m}}="H"
"a"-"c"
"b"-"c"
"d"-"e"
"e"-"f"
"f"-"h"
"g"-"i"
"g"-"j"
"i"-"l"
"k"-"m"
"k"-"n"
}
}}
\newcommand{\boxcorolla}[5]{\ensuremath{
 \xygraph{
!{<0pt,0pt>;<4.5pt,0pt>:<0pt,4.5pt>::}
!{(-4,4)}="a"
!{(-2,4)}="b"
!{(0,4)}="c"
!{(2,4)}="d"
!{(4,4)}="e"
!{(-2,1)}="f"
!{(-1,1)}="g"
!{(1,1)}="h"
!{(2,1)}="i"
!{(0,0)}*{\scriptscriptstyle #3}
!{(-2,-1)}="k"
!{(-1,-1)}="l"
!{(1,-1)}="m"
!{(2,-1)}="n"
!{(-4,-4)}="o"
!{(-2,-4)}="p"
!{(0,-4)}="q"
!{(2,-4)}="r"
!{(4,-4)}="s"
!{(-4.5,5)}*{\scriptscriptstyle #1}  
!{(-0.8,4)}*{.}                   
!{(0,4)}*{.}                   
!{(0.8,4)}*{.}                   
!{(4.5,5)}*{\scriptscriptstyle #2}   
!{(-4.5,-5)}*{\scriptscriptstyle #4} 
!{(-0.8,-4)}*{.}                  
!{(0,-4)}*{.}                  
!{(0.8,-4)}*{.}                  
!{(4.5,-5)}*{\scriptscriptstyle #5}  
"a"-"f"
"b"-"g"
"d"-"h"
"e"-"i"
"f"-"i"
"f"-"k"
"i"-"n"
"k"-"n"
"k"-"o"
"l"-"p"
"m"-"r"
"n"-"s"
}
}}
\newcommand{\mirroredsplitboxcorolla}[8]{\ensuremath{
 \xygraph{
!{<0pt,0pt>;<4.5pt,0pt>:<0pt,-4.5pt>::}
!{(0,8)}="a"
!{(2,8)}="b"
!{(4,8)}="c"
!{(6,8)}="d"
!{(8,8)}="e"
!{(2,5)}="f"
!{(3,5)}="g"
!{(5,5)}="h"
!{(6,5)}="i"
!{(4,4)}*{\scriptscriptstyle i_2}
!{(2,3)}="k"
!{(3,3)}="l"
!{(5,3)}="m"
!{(6,3)}="n"
!{(0,0)}="o"
!{(2,0)}="p"
!{(4,0)}="q"
!{(6,0)}="r"
!{(8,0)}="s"
!{(-0.5,9)}*{\scriptscriptstyle #1}  
!{(3.2,8)}*{.}                   
!{(4,8)}*{.}                   
!{(4.8,8)}*{.}                   
!{(8.5,9)}*{\scriptscriptstyle #2}   
!{(2.5,-1)}*{\scriptscriptstyle #3} 
!{(3.2,0)}*{.}                   
!{(4,0)}*{.}                   
!{(4.8,0)}*{.}                   
!{(8.5,-1)}*{\scriptscriptstyle #4}  
!{(-8,0)}="A"
!{(-6,0)}="B"
!{(-4,0)}="C"
!{(-2,0)}="D"
!{(-6,-3)}="F"
!{(-5,-3)}="G"
!{(-3,-3)}="H"
!{(-2,-3)}="I"
!{(-4,-4)}*{\scriptscriptstyle i_1}
!{(-6,-5)}="K"
!{(-5,-5)}="L"
!{(-3,-5)}="M"
!{(-2,-5)}="N"
!{(-8,-8)}="O"
!{(-6,-8)}="P"
!{(-4,-8)}="Q"
!{(-2,-8)}="R"
!{(0,-8)}="S"
!{(-8.5,1)}*{\scriptscriptstyle #5}  
!{(-4.8,0)}*{.}                   
!{(-4,0)}*{.}                   
!{(-3.2,0)}*{.}                   
!{(-1.6,1)}*{\scriptscriptstyle #6}   
!{(-8.5,-9)}*{\scriptscriptstyle #7} 
!{(-4.8,-8)}*{.}                   
!{(-4,-8)}*{.}                   
!{(-3.2,-8)}*{.}                   
!{(0.5,-9)}*{\scriptscriptstyle #8}  
"a"-"f"
"b"-"g"
"d"-"h"
"e"-"i"
"f"-"i"
"f"-"k"
"i"-"n"
"k"-"n"
"k"-"I"
"l"-"p"
"m"-"r"
"n"-"s"
"A"-"F"
"B"-"G"
"D"-"H"
"F"-"I"
"F"-"K"
"I"-"N"
"K"-"N"
"K"-"O"
"L"-"P"
"M"-"R"
"N"-"S"
}
}}
\newcommand{\mirroredboxcorolladiffup}[5]{\ensuremath{
 \xygraph{
!{<0pt,0pt>;<4.5pt,0pt>:<0pt,-4.5pt>::}
!{(-4,4)}="a"
!{(-2,4)}="b"
!{(0,4)}="c"
!{(2,4)}="d"
!{(4,4)}="e"
!{(4,8)}="t"
!{(4,9)}*{\scriptscriptstyle \sigma(#2)}
!{(4,6)}*{\scriptstyle \times}="u"
!{(-2,1)}="f"
!{(-1,1)}="g"
!{(1,1)}="h"
!{(2,1)}="i"
!{(0,0)}*{\scriptscriptstyle #3}
!{(-2,-1)}="k"
!{(-1,-1)}="l"
!{(1,-1)}="m"
!{(2,-1)}="n"
!{(-4,-4)}="o"
!{(-2,-4)}="p"
!{(0,-4)}="q"
!{(2,-4)}="r"
!{(4,-4)}="s"
!{(-6.5,5)}*{\scriptscriptstyle \sigma(#1)}  
!{(-3.5,5)}*{\dots}                   
!{(1,5)}*{\scriptscriptstyle \sigma(#2-1)}   
!{(-4.5,-5)}*{\scriptscriptstyle #4} 
!{(0,-5)}*{\dots}                  
!{(4.5,-5)}*{\scriptscriptstyle #5}  
"a"-"f"
"b"-"g"
"d"-"h"
"e"-"i"
"f"-"i"
"f"-"k"
"i"-"n"
"k"-"n"
"k"-"o"
"l"-"p"
"m"-"r"
"n"-"s"
"t"-"u"
"u"-"e"
}
}}
\newcommand{\mirroredboxcorolladiffdown}[5]{\ensuremath{
 \xygraph{
!{<0pt,0pt>;<4.5pt,0pt>:<0pt,-4.5pt>::}
!{(-4,4)}="a"
!{(-2,4)}="b"
!{(0,4)}="c"
!{(2,4)}="d"
!{(4,4)}="e"
!{(-2,1)}="f"
!{(-1,1)}="g"
!{(1,1)}="h"
!{(2,1)}="i"
!{(0,0)}*{\scriptscriptstyle #3}
!{(-2,-1)}="k"
!{(-1,-1)}="l"
!{(1,-1)}="m"
!{(2,-1)}="n"
!{(-4,-4)}="o"
!{(-4,-6)}*{\scriptstyle \times}="t"
!{(-4,-8)}="u"
!{(-4,-9)}*{\scriptscriptstyle \tau(1)}
!{(-2,-4)}="p"
!{(0,-4)}="q"
!{(2,-4)}="r"
!{(4,-4)}="s"
!{(-4.5,5)}*{\scriptscriptstyle #1}  
!{(0,5)}*{\dots}                   
!{(4.5,5)}*{\scriptscriptstyle #2}   
!{(-2,-5)}*{\scriptscriptstyle \tau(#4)} 
!{(1.5,-5)}*{\dots}                  
!{(5.5,-5)}*{\scriptscriptstyle \tau(#5)}  
"a"-"f"
"b"-"g"
"d"-"h"
"e"-"i"
"f"-"i"
"f"-"k"
"i"-"n"
"k"-"n"
"k"-"o"
"l"-"p"
"m"-"r"
"n"-"s"
"o"-"t"
"t"-"u"
}
}}
\newcommand{\lietwoboxcorolla}[3]{\ensuremath{
 \xygraph{
!{<0pt,0pt>;<4.5pt,0pt>:<0pt,4.5pt>::}
!{(-4,4)}="a"
!{(-2,4)}="b"
!{(0,4)}="c"
!{(2,4)}="d"
!{(4,4)}="e"
!{(-2,1)}="f"
!{(-1,1)}="g"
!{(1,1)}="h"
!{(2,1)}="i"
!{(0,0)}*{\scriptscriptstyle #3}
!{(-2,-1)}="k"
!{(-1,-1)}="l"
!{(1,-1)}="m"
!{(2,-1)}="n"
!{(0,-1)}="o"
!{(0,-4)}="p"
!{(-4.5,5)}*{\scriptscriptstyle #1}  
!{(0,5)}*{\dots}                   
!{(4.5,5)}*{\scriptscriptstyle #2}   
"a"-"f"
"b"-"g"
"d"-"h"
"e"-"i"
"f"-"i"
"f"-"k"
"i"-"n"
"k"-"n"
"o"-"p"
}
}}
\newcommand{\lietwosplitboxcorolla}[4]{\ensuremath{
 \xygraph{
!{<0pt,0pt>;<-4.5pt,0pt>:<0pt,4.5pt>::}
!{(-2,8)}="a"
!{(0,8)}="b"
!{(2,8)}="c"
!{(4,8)}="d"
!{(6,8)}="e"
!{(0,5)}="f"
!{(1,5)}="g"
!{(3,5)}="h"
!{(4,5)}="i"
!{(2,4)}*{\scriptscriptstyle i_1}
!{(0,3)}="k"
!{(1,3)}="l"
!{(2,3)}="lm"
!{(3,3)}="m"
!{(4,3)}="n"
!{(-2,0)}="o"
!{(-2.5,9)}*{\scriptscriptstyle #2}  
!{(1.2,8)}*{.}                   
!{(2,8)}*{.}                   
!{(2.8,8)}*{.}                   
!{(6.5,9)}*{\scriptscriptstyle #1}   
!{(-8,0)}="A"
!{(-6,0)}="B"
!{(-4,0)}="C"
!{(-2,0)}="D"
!{(-6,-3)}="F"
!{(-5,-3)}="G"
!{(-3,-3)}="H"
!{(-2,-3)}="I"
!{(-4,-4)}*{\scriptscriptstyle i_2}
!{(-6,-5)}="K"
!{(-5,-5)}="L"
!{(-4,-5)}="LM"
!{(-3,-5)}="M"
!{(-2,-5)}="N"
!{(-4,-8)}="Q"
!{(-9,1)}*{\scriptscriptstyle #4}  
!{(-4.8,0)}*{.}                   
!{(-4,0)}*{.}                   
!{(-3.2,0)}*{.}                   
!{(-2.5,1)}*{\scriptscriptstyle #3}   
"a"-"f"
"b"-"g"
"d"-"h"
"e"-"i"
"f"-"i"
"f"-"k"
"i"-"n"
"k"-"n"
"lm"-"I"
"A"-"F"
"B"-"G"
"D"-"H"
"F"-"I"
"F"-"K"
"I"-"N"
"K"-"N"
"LM"-"Q"
}
}} 
\begin{document}

\begin{abstract}
Recently S.A.~Merkulov established a link between differential geometry and homological algebra by giving descriptions of several differential geometric structures in terms of minimal resolutions of props. In particular he described the prop profile of Poisson geometry. In this paper we define a prop such that representations of its minimal resolution in a vector space $V$ are in a one-to-one correspondence with bi-Hamiltonian structures, i.e.~pairs of compatible Poisson structures, on the formal manifold associated to $V$.
\end{abstract}

\maketitle

\section*{Introduction}

Poisson geometry plays a prominent role in Hamiltonian mechanics; the differential equations associated to a Hamiltonian system can be formulated via Poisson structures. The presence of two compatible Poisson structures makes it possible to solve a wide range of integrable Hamiltonian equations, e.g.~the KdV-equations, by providing a hierarchy of integrable vector fields. This kind of geometric structure is called a Poisson pair or a bi-Hamiltonian structure. See e.g.~\cite{Arnold1988} 
for a treatment of Hamiltonian systems, \cite{Weinstein1998} for a survey on Poisson geometry and \cite{Magri2003} for an introduction to bi-Hamiltonian structures.

In the papers \cite{Merkulov2004a}, \cite{Merkulov2005} and \cite{Merkulov2006} S.A.~Merkulov made the 
discovery that certain differential geometric structures, including Hertling-Manin, Nijenhuis, and Poisson structures, allow descriptions as the degree zero part of minimal resolutions of certain simple algebraic props. Merkulov called such descriptions prop profiles. Apart from the sheer beauty of these observations they provide us with new and surprising links between differential geometry, homological algebra and algebraic topology. For example, the prop profile of Hertling-Manin's weak Frobenius manifolds was shown to be given by a minimal resolution of the operad of Gerstenhaber algebras which in turn is quasi-isomorphic to the chain operad of the little 2-disc operad \cite{Cohen1976}. The prop profile of Poisson geometry, on the other hand, predicts existence of rather mysterious wheeled Poisson structures which can be deformation quantized \cite{Merkulov2008,Merkulov2008a} in a wheeled propic way. Here by wheeled we mean that we allow graphs with oriented cycles which on the geometric side translates to traces of the involved structures. It is an open and interesting question whether or not the associated props have topological meaning as in the case of Hertling-Manin geometry.

The general philosophy of constructing prop profiles can be expressed as follows.
\[
 \xymatrix{
\text{DiffGeom} \ar[r]^{\eqref{A}} & \text{DiffGeom} \ar[r]^{\eqref{B}} & \text{Props}\ar[r]^{\eqref{C}} & \text{Props} \ar@/^20pt/[lll]_{\eqref{D}}
}
\]
\begin{enumerate}
\item \label{A} Extract the fundamental part of the differential geometric structure.
\item \label{B} Translate this fundamental part into a prop $\Pcal$.
\item \label{C} Compute its minimal resolution $\Pcal_\infty$.
\item \label{D} Translate $\Pcal_\infty$ back into a differential geometric structure.
\end{enumerate}

A Poisson structure on a graded manifold $V$ is a graded Lie bracket on the structure sheaf $\Ocal_V$ which acts as a derivation in each argument with respect to the multiplication on $\Ocal_V$. A Poisson structure can equivalently be defined as a bivector field $\G$ of degree two satisfying $[\G,\G]_S=0$. Here the bracket is the Schouten bracket on the polyvector fields on $V$. 
The fundamental part of this structure translates into the prop $\Poisson$ of \Poissonalgs, i.e.~ of Lie bialgebras with bracket and cobracket differing by one in degree. The prop profile of Poisson geometry, constructed in \cite{Merkulov2006}, is given by its minimal resolution $\Poisson_\infty$. Translating the prop profile back into differential geometry yields a polyvector field $\G$ of degree two, but not necessarily concentrated in $\bvfs$, such that $[\G,\G]_S=0$. We show in this paper (\S \ref{familyofbrackets}) that one can interpret such a polyvector field as a family $\{L_n\}_{n\in\N}$ of $n$-ary brackets on the structure sheaf $\Ocal_V$. These brackets form an $\ELL_\infty$ algebra and act as derivations in each argument with respect to the multiplication in $\Ocal_V$. 

Our main result is that formal bi-Hamiltonian structures can be derived from a rather simple algebraic structure comprising a Lie bracket of degree one and two compatible Lie cobrackets of degree zero, with the further relations that each cobracket together with the Lie bracket form a \Poissonalg. We call such a structure a \Bipoissonalg\ and denote the corresponding prop by $\Bipoisson$. Using results from \cite{Gan2003}, \cite{Dotsenko2007}, and \cite{Strohmayer2007a}, we show that its dioperadic part is Koszul, which makes it possible to compute its minimal resolution $\Bipoisson_\infty$ (\S \ref{bipoissonresolution}) and leads us to the following conclusion.

\begin{thmalpha}
\label{bipoissoninRn}
There is a one-to-one correspondence between representations of $\Bipoisson_\infty$ in $\R^n$ and formal bi-Hamiltonian structures on $\R^n$ vanishing at the origin.
\end{thmalpha}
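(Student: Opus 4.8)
The plan is to carry out step (\ref{D}) of the translation scheme (\ref{A})--(\ref{D}) applied to the explicit minimal model $\Bipoisson_\infty$ constructed in \S\ref{bipoissonresolution}. Recall that $\Bipoisson_\infty$ is quasi-free, $\Bipoisson_\infty=(\Free(E),\partial)$, where $E$ is the $\s$-bimodule assembled (with the appropriate degree shifts) from the Koszul dual of the dioperadic part of $\Bipoisson$; in particular the output legs of the generators in $E$ carry a two-colouring recording from which of the two Lie cobrackets a given generator descends, all generators have at least one input, and the degree-two condition built into $\Bipoisson$ fixes their internal degrees. Since $\Free(E)$ is free as a graded prop, a representation of $\Bipoisson_\infty$ in $V=\R^n$ is precisely a morphism of $\s$-bimodules $E\to\End_V$ of degree zero which intertwines $\partial$ with the (zero) differential of $\End_V$; equivalently, a collection of $\s_m\times\s_n$-equivariant maps $E(m,n)\to\Hom(V^{\otimes m},V^{\otimes n})$ subject to the single constraint $\rho\circ\partial=0$.

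First I would set up the dictionary between such equivariant families and formal polyvector fields on the formal manifold with space of coordinates $V$, exactly as in Merkulov's treatment of $\Poisson_\infty$ recalled in \S\ref{familyofbrackets}: a map $V^{\otimes m}\to V^{\otimes n}$ of the relevant symmetry type is the $m$-th Taylor coefficient of an $n$-vector field, the degree-two normalization pins down the total degree (so the result need not be concentrated in $\bvfs$, whence the $\ELL_\infty$-interpretation of \S\ref{familyofbrackets}), and "at least one input'' forces all coefficient functions to vanish at the origin. The two output colours then split $\rho$ into two such families, assembling into a pair $\G_1,\G_2$ of degree-two formal polyvector fields vanishing at the origin or, packaged with a formal parameter, into a single $\Gh=\G_1+\hslash\G_2\in\pvfsh$ that is linear in $\hslash$.

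The crux is to identify the chain-map condition $\rho\circ\partial=0$ with the Maurer--Cartan equation $\tfrac12[\Gh,\Gh]_S=0$. The differential $\partial$ on $\Free(E)$ is the unique prop derivation extending the dioperadic decomposition coproduct of the Koszul dual, and under the dictionary a single decomposition of a generator into two generators corresponds to grafting two polyvector fields along one of the summands of the Schouten bracket; tracking signs and degree shifts, $\rho$ is a chain map exactly when $[\Gh,\Gh]_S=0$. Expanding in $\hslash$ and using $\hslash$-linearity of $\Gh$, the constant, linear and quadratic coefficients give $[\G_1,\G_1]_S=[\G_1,\G_2]_S=[\G_2,\G_2]_S=0$, i.e. $\G_1,\G_2$ form a compatible pair of formal Poisson structures — a formal bi-Hamiltonian structure vanishing at the origin. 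Reading off the Taylor coefficients of a prescribed such $\Gh$ as the values of $\rho$ on $E$ is the inverse assignment, so the correspondence is a bijection.

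The main obstacle I anticipate is twofold. On the homological side one must know that $\Bipoisson_\infty$ really is a minimal model of the \emph{prop} $\Bipoisson$, not merely of its dioperadic part; this is where the genus-filtration argument together with the Koszulness of the relevant (compatible) Lie-bialgebra type dioperads — obtained from \cite{Gan2003}, \cite{Dotsenko2007} and \cite{Strohmayer2007a} — enters, and one has to check that the higher-genus components of $\partial$ impose no further conditions on representations in an ordinary vector space beyond those already recorded in $[\Gh,\Gh]_S=0$. On the bookkeeping side, the delicate point is arranging signs and the interplay of the degree-two and $\hslash$-gradings so that the cross term of $[\Gh,\Gh]_S$ reproduces $[\G_1,\G_2]_S$ precisely via the compatibility of the two cobrackets in $\Bipoisson$, and so that the support condition on $E$ matches "vanishing at the origin'' on the nose.
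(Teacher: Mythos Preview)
Your overall strategy---translate generators of $E$ into Taylor coefficients of polyvector fields and identify $\rho\circ\partial=0$ with a Schouten Maurer--Cartan equation---is the paper's strategy. But the key step is garbled, and as written the argument does not go through.

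The issue is your description of $E$ and of how the ``two colours'' enter. The generators are not two-coloured on their output legs; rather
\[
E(m,n)\;=\;\underbrace{\sgn_m\oplus\dotsb\oplus\sgn_m}_{m\text{ copies}}\otimes\bbone_n[m-2],
\]
one copy for each $k$ with $0\le k\le m-1$, recording how many of the $m-1$ cobracket vertices in the Koszul-dual basis are black. Consequently a representation assembles not into a \emph{pair} $\G_1,\G_2$ but into
\[
\G=\sum_{k\ge 0}\leftsub{k}{\G}\,\hslash^k,\qquad \leftsub{k}{\G}\in\wedge^{\bullet\ge k+1}\Tcal_V,
\]
which is in general an honest power series in $\hslash$, and the chain-map condition becomes $[\G,\G]_{\tilde S_\hslash}=0$ in $\pvfsh$. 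Your claim that $\Gh$ is automatically linear in $\hslash$ because there are ``two output colours'' is false for graded $V$ and is not the mechanism even for $V=\R^n$.

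What actually forces $\hslash$-linearity over $\R^n$ is the degree count you allude to but never use: the generator of type $(m,n,k)$ sits in cohomological degree $2-m$, so a degree-zero representation in an ungraded $V$ kills everything except $m=2$; then $k\in\{0,1\}$ and $\G=\leftsub{0}{\G}+\leftsub{1}{\G}\hslash$ with both summands bivector fields. Only at this point does expanding $[\G,\G]_{\tilde S_\hslash}=0$ in $\hslash$ yield the three bi-Hamiltonian identities. The paper does exactly this: it first proves the general graded statement (Theorem~\ref{bipoissoninV} via Proposition~\ref{bipoissonpolyvectorprop}) and then specializes by the degree argument. Your write-up should make that specialization explicit.

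A minor point: your worry about ``higher-genus components of $\partial$'' is moot. By construction $\Bipoisson_\infty$ is obtained by applying the free-prop functor to the dioperadic cobar construction, so the differential remains genus-zero; there are no extra relations to check beyond the dioperadic ones.
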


In fact we prove a stronger result. When considering representations in arbitrary graded vector spaces we obtain the following result which we prove in Section \ref{geominterpretation}.

\begin{thmalpha}
\label{bipoissoninV}
There is a one-to-one correspondence between representations of $\Bipoisson_\infty$ in a graded vector space $V$ and polyvector fields $\G=\sum_k\leftsub{k}{\G}\hslash^k\in\pvfsh$ on the formal manifold associated to $V$ which depend on the formal parameter $\hslash$ and satisfy the conditions
\begin{enumerate}
\item\label{bipoissonspreadout} $\leftsub{k}{\G}\in\wedge^{\bullet\geq k+1}\Tcal_V$,
\item\label{bipoissondegreetwo} $|\G|=2$,
\item\label{bipoissonschouten} $[\G,\G]_{S}=0$, 
\item\label{bipoissonpointedproperty} $\G|_{0}=0$.
\end{enumerate}
\end{thmalpha}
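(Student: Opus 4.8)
The plan is to follow the standard pattern by which one translates a propic structure into differential geometry, reducing Theorem \ref{bipoissoninV} to a composition of a few functorial identifications. First I would unwind what a representation of $\Bipoisson_\infty$ in $V$ means: since $\Bipoisson_\infty$ is a minimal (quasi-free) resolution, such a representation is a Maurer--Cartan element in the convolution dg Lie algebra $\Hom(\Bipoisson_\infty, \End_V)$, which by freeness of the underlying prop is the same as a degree-preserving morphism of dg props determined by its values on the generators, subject to the condition that it respect the differential. The generators of $\Bipoisson_\infty$, being the cogenerators of the Koszul dual co(di)operad computed in \S\ref{bipoissonresolution}, are $(n,m)$-corollas carrying the appropriate sign representation and homological degree; I would record the explicit degree and symmetry of each generator and observe that the differential is a sum over ways of splitting a corolla into two along a single internal edge.

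Next I would set up the geometric side. A polyvector field $\G\in\pvfsh$ that is pointed ($\G|_0=0$), i.e.\ has no constant or linear-in-coordinates term of the wrong kind, is nothing but a collection of its Taylor coefficients, and each homogeneous component of $\wedge^p\Tcal_V$ of polynomial degree $q$ in the coordinates is an element of $\Hom(V^{\otimes q}, \wedge^p V)$ with the symmetry dictated by (anti)commutativity of coordinates and wedges; decorating by $\hslash^k$ and imposing condition (i), $\leftsub k\G\in\wedge^{\bullet\ge k+1}\Tcal_V$, matches exactly the ``spread-out'' bigrading of the resolution's generators, while condition (ii), $|\G|=2$, pins the homological degree. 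I would make this into a degree-$0$ bijection between the space of such $\hslash$-dependent pointed polyvector fields and the space of generator-data for a candidate dg-prop map; this is essentially bookkeeping once the conventions are fixed, but it must be done carefully so that the $\hslash$-adic filtration on the geometry side corresponds to the genus/weight filtration making everything well defined. The key remaining point is that the Schouten bracket of polyvector fields is precisely the bilinear operation obtained by contracting one polyvector field into another along a single coordinate/tangent pairing — the operadic ``edge contraction'' — so that the quadratic equation $[\G,\G]_S=0$ translates term by term into the statement that the candidate map intertwines the resolution differential $\partial$ with the zero differential on $\End_V$.

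Concretely, the chain of identifications I would assemble is:
\begin{align*}
\{\,\G\in\pvfsh : (\mathrm{i}),(\mathrm{ii}),(\mathrm{iv})\,\} \;&\xrightarrow{\ \sim\ }\; \Hom_{\s}\big(\text{cogenerators of }\Bipoisson^{\antishriek},\End_V\big)_0,\\
[\G,\G]_S = 0 \;&\Longleftrightarrow\; d_{\End_V}\circ f = f\circ\partial,
\end{align*}
after which a representation of $\Bipoisson_\infty$ is by definition a dg map, hence corresponds to a $\G$ satisfying all four conditions. For the first bijection I would lean on the explicit form of $\Bipoisson_\infty$ from \S\ref{bipoissonresolution} (which rests on the Koszulness of the dioperadic part, already established using \cite{Gan2003}, \cite{Dotsenko2007}, \cite{Strohmayer2007a}); for the second I would cite Merkulov's treatment of the Poisson case in \cite{Merkulov2006}, since the computation that the Schouten bracket matches the propic composition is identical — the only new feature here is the extra $\hslash$-grading, which is additive under the bracket and therefore poses no difficulty.

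**The main obstacle** I anticipate is not the conceptual framework but the matching of signs, degrees, and symmetry-group actions: one must check that the homological degree shift built into the generators of the minimal resolution (coming from the desuspensions in the bar/cobar construction of the Koszul dual) is exactly compensated by placing $\leftsub k\G$ in $\wedge^{\bullet\ge k+1}\Tcal_V$ with total degree $2$, and that the sign rules in the Schouten bracket agree with the Koszul signs in the differential $\partial$ on corollas. I would handle this by fixing once and for all the identification between a $(p,q)$-corolla generator in weight $k$ and a homogeneous polyvector-field monomial, verifying the degree bookkeeping on the lowest nontrivial generators (the bracket $\Ysmall$, the two cobrackets, and the first genuinely higher generator), and then invoking the functoriality of the free-prop construction to extend. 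Everything else — surjectivity and injectivity of the correspondence, $\hslash$-adic convergence, and the pointedness condition (iv), which simply says we discard the unique grading piece that would otherwise obstruct the dg map being well defined — is then routine.
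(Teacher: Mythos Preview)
Your proposal is correct and follows essentially the same route as the paper: reduce a representation of the quasi-free $\Bipoisson_\infty$ to its values on generators subject to compatibility with the differential, identify generator-data with Taylor coefficients of a pointed $\hslash$-dependent polyvector field in $\gfrak_V$, and then verify that the differential-compatibility condition (the paper's equation \eqref{corollaeqn}) matches the vanishing of the $\hslash$-linearized Schouten bracket (the paper's equation \eqref{gammaeqn}). The Maurer--Cartan/convolution-Lie-algebra framing you lead with is exactly what the paper records afterward in \S6.2 as a ``conceptual interpretation''; the core computation and the sign/degree bookkeeping you flag as the main obstacle are what the paper carries out explicitly in Proposition~\ref{bipoissonpolyvectorprop}.
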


A pair of Lie algebras are called compatible if the sum of their Lie brackets is again a Lie bracket. We denote the operad of compatible Lie algebras by $\Lietwo$. As a byproduct of the resolution of $\Bipoisson$ we obtain a minimal resolution of $\Lietwo$ (\S \ref{lietworesolution}). We call algebras over this operad $\ELL^2_\infty$ algebras. 

A pair of Poisson structures are called compatible if their brackets are compatible as Lie brackets. We show (\S \ref{abiggerfamily}) how an element $\G\in\pvfsh$ with properties \eqref{bipoissonspreadout}, \eqref{bipoissondegreetwo}, and \eqref{bipoissonschouten} of Theorem \ref{bipoissoninV} corresponds to a family $\{\leftsub{k}{L}_n\}_{n\in\N,1\leq k \leq n}$ of $n$-ary brackets on the structure sheaf $\Ocal_V$. These brackets form an $\ELL^2_\infty$ algebra and act as derivations in each argument with respect to the multiplication in $\Ocal_V$. When $V$ is concentrated in degree zero we obtain precisely a bi-Hamiltonian structure. 
Property \eqref{bipoissonpointedproperty} means that the structure vanish at the distinguished point. By Remark \ref{nonpointedrem} this is not a serious restriction.

To deformation quantize in the propic sense of Merkulov one needs a wheeled propic resolution of $\Bipoisson$. From the dioperadic resolution that we construct one obtains a propic resolution by known results. We note (\S \ref{propstowheeledprops}) that the same obstruction occurs as in the case of $\Poisson$ when trying to extend it to a resolution of wheeled props.

Section \ref{galgebras} comprises definitions of operads, dioperads, properads and props and in Section \ref{galgebraresolutions} we give a formulation of the Koszul duality machinery that enable us to compute resolutions of such algebraic structures. This is done using the unifying approach of $\Gfrak^*$-algebras \cite{Merkulov2007a} which are algebraic structures in which the product is modeled by classes of directed graphs. This makes it possible to define all the above structures as instances of $\Gfrak^*$-algebras differing just in which class of graphs one considers. Also the Koszul duality theory of the respective structures can be expressed as special cases of this unifying theory. Although these sections essentially contain no new material we think their rather encompassing length is motivated for two reasons. Firstly, not all of this material has been expressed in this unifying language and that which has has not been so in this amount of detail. Secondly, to the best of our knowledge their is as of yet no canonical source gathering this material, we wish to keep the paper fairly self-contained and accessible to differential geometers as well as to algebraists.

The rest of the paper is organized as follows. In Section \ref{poissongeometry} we recall basic notions of Poisson geometry and give an interpretation of the prop profile of Poisson structures in terms of a family of brackets. 
Then, in Sections \ref{propprofileextraction} and \ref{propprofileresolution}, we extract the prop profile of bi-Hamiltonian structures. Finally in Section \ref{geominterpretation} we prove Theorem \ref{bipoissoninRn} and Theorem \ref{bipoissoninV} and interpret the prop profile of bi-Hamiltonian structures as a family of brackets.

A few words about our notation. All vector spaces and tensor products are considered to be over a field $\K$ of characteristic zero unless otherwise specified. For a graded vector space $V=\oplus_{j\in\Z}V_j$ we denote by $V[i]$ the vector space whose graded components are given by $V[i]_j=V_{i+j}$. Given a finite set $S$ we denote its cardinality by $|S|$. By $\N$ we mean the set $\{0,1,2,\dotsc \}$. For $n\in\N$, we denote by $[n]$ the set $\{1,\dotsc,n\}$.  Let $\s_n$ denote the symmetric group of permutations of $[n]$. By $\bbone_n$ we denote the trivial representation of $\s_n$ and by $\sgn_n$ the sign representation. 
\setcounter{tocdepth}{1}
\tableofcontents
\section{$\Gfrak^*$-algebras}
\label{galgebras}

Operads, dioperads and properads are all generalizations of algebras. An algebra consists of a vector space and a product; the generalized structures consist of certain families of these. See \cite{Merkulov2007a} for an introduction to these generalizations via an interpretation of the multiplicative structure of associative algebras in terms of graphs. Below we give thorough definitions of the above structures, as well as their co-versions, using this graph-approach.

\subsection{$\s$-bimodules}

First we define the underlying spaces of our generalized structures.

\begin{defn*}
 An $(\s_m,\s_n)$-\emph{bimodule} is a vector space $M$ with a right action of $\s_n$ and a commuting left action of $\s_m$. A family $\{M(m,n)\}_{m,n\in\N}$ of $(\s_m,\s_n)$-bimodules is called an $\s$-\emph{bimodule}.

A family $\{M(n)\}_{n\in\N}$ of right $(\s_n)$-modules is called an $\s$-\emph{module}.
\end{defn*}

If an $\s$-bimodule $M$ satisfies $M(m,n)=0$ for $m \neq 1$ and all $n$ we can consider it as an $\s$-module since the action of $\s_1$ is trivial. We denote $M(1,n)$ by $M(n)$.

Let $M$ and $N$ be $\s$-bimodules. An \emph{$\s$-bimodule homomorphism} $\theta\colon M\to N$ is a family $\{\theta_{m,n}\colon M(m,n)\to N(m,n) \}_{m,n\in\N}$ of $(\s_m,\s_n)$-bimodule homomorphisms. We will often write $\theta(p)$ for $\theta_{m,n}(p)$.

\subsection{Labeled directed graphs}

Composition of elements of $\s$-bimodules is modeled by graphs. Intuitively we can think of these graphs as 1-dimensional regular CW-complexes with the 1-cells given an orientation. Two subsets of the 1-cells are singled out, directed towards and away from the graph, respectively, and are labeled with integers.

\begin{defn*}

A \emph{labeled directed graph} $G$ is the data
\[
(V_G,E_G,\Phi_G, E^{\inp}_G, E^{\outp}_G, \inp_G ,\outp_G ).
\]
The elements of the set $V_G$ are called the \emph{vertices} of $G$, the elements of the set $E_G$ the \emph{edges}. Further $\Phi_G\colon E_G \to (V_G\times V_G) \sqcup V_G$. The edges in the preimage $\Phi_G^{-1}(V_G)$ are called \emph{external edges} and the edges in the preimage $\Phi_G^{-1}(V_G\times V_G)$ are called \emph{internal}. We denote the internal edges by $E^{\internal}_G$. For an edge $e$ with $\Phi_G(e)=(u,v)$ we say that $e$ is an edge from $u$ to $v$ and in this case we call the vertices $u$ and $v$ \emph{adjacent}.

The set of external edges is partitioned into the sets  $ E^{\inp}_G$ and  $ E^{\outp}_G$ of \emph{global input edges} and \emph{global output edges}, respectively. We denote by $n_G$ and $m_G$ the cardinalities of these sets. The external edges are labeled by integers via the bijections $\inp_G\colon [n_G] \to  E^{\inp}_G$ and $\outp_G\colon  E^{\outp}_G \to [m_G]$.

A \emph{labeled directed $(m,n)$-graph} $G$ is a labeled directed graph with $m_G=m$ and $n_G=n$.

\end{defn*}

Note that the data $(V_G,E^{\internal}_G,\Phi_G|_{E^{\internal}_G})$ is an ordinary directed graph.

There exist a natural right action of $\s_n$ and a commuting left action of $\s_m$ on the class of $(m,n)$-graphs given by permuting the labels. For a labeled directed $(m,n)$-graph $G$ the action of $\tau\in\s_n$ is given by $(\inp_G \tau)(i):=\inp_G\circ\tau(i)$. Similarly $\sigma\in\s_m$ acts to the left by $(\sigma \outp_G)(e):=\sigma\circ\outp_G(e)$, cf.~Figure \ref{graphfig}.

\begin{figure}[h!]
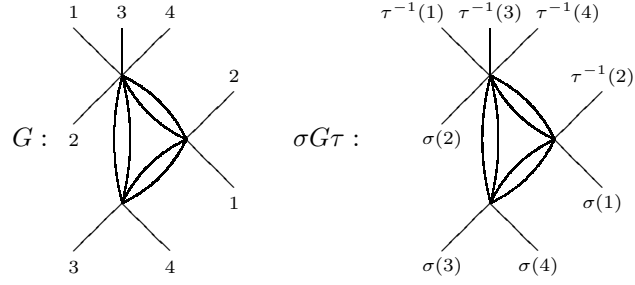

\[
 G:\shs \GraphGnumbered \bhs\bhs \sigma G \tau: \shs \GraphsigmaGtau
\]
\caption{\label{graphfig} The graphs $G$ and $\sigma G \tau$.}
\end{figure}

A \emph{path} from a vertex $u$ to a vertex $v$ in a labeled directed graph is a sequence of edges $e_1,\dotsc , e_r$ such that for some sequence of vertices $u=v_1,\dotsc v_{r+1}=v$ either $\Phi_G(e_i)=(v_i,v_{i+1})$ or $\Phi_G(e_i)=(v_{i+1},v_i)$. A path is called \emph{directed} if $\Phi_G(e_i)=(v_i,v_{i+1})$ for all $i$ and it is called \emph{closed} if $u=v$. A closed directed path is called a \emph{wheel}.
A graph is \emph{connected} if for each pair of vertices there is a path between them.

\subsection{Isomorphisms of graphs}

We are only interested in the structure of the graphs up to a certain level of detail: how many vertices there are, how many internal edges there are in each direction between any two vertices, how many external edges are directed towards and away from each vertex and how they are labeled. Thus we need to define isomorphisms of graphs.

 Let $G$ and $G'$ be labeled directed graphs. An \emph{isomorphism} of labeled directed graphs $\Psi\colon G \to G'$ is a pair
$(\Psi_V,\Psi_E)$, where $\Psi_V\colon V_G\to V_{G'}$  and $\Psi_E\colon E_G\to E_{G'}$ are bijections with the properties
\begin{enumerate}
 \item \label{isoprop-intext} $\Psi_E( E^{\inp}_G)= E^{\inp}_{G'}$ and $\Psi_E( E^{\outp}_G)= E^{\outp}_{G'}$,
 \item \label{isoprop-intstructure} $\Phi_{G'}(\Psi_E(e))=\Psi_V\times\Psi_V(\Phi_{G}(e))$ for all internal edges $e$,
 \item \label{isoprop-extstructure} $\Phi_{G'}(\Psi_E(e))=\Psi_V(\Phi_G(e))$ for all external edges  $e$,
 \item \label{isoprop-globlabel} $\inp_{G'}=\Psi_E\circ\inp_G$ and $\outp_G=\outp_{G'}\circ\Psi_E$.

\end{enumerate}

\begin{ex*} Three graphs, of which the third not is isomorphic to the first two because of the labeling of the edges.
\[ \YYb{1}{2}{3}\iso\YYb{2}{1}{3}\niso\YYb{1}{3}{2}\]
\end{ex*}

\subsection{Classes of graphs}
\label{familiesofgraphs}

From now on we will refer to isomorphism classes of labeled directed graphs simply as graphs. We define the following classes of graphs:
\begin{enumerate}
 \item $\Gfrak^{\circlearrowleft}$ is the class of all graphs. 
 \label{wheeledpropfamily}
 \item $\Gfrak^{\downarrow}$ is the class of graphs without wheels. 
 \label{propfamily}
 \item $\Gfrak^{\downarrow}_c$ is the class of connected graphs without wheels. 
 \label{properadfamily}
 \item $\Gfrak^{\downarrow}_{c,0}$ is the class connected graphs without closed paths (the class of trees).
 \label{dioperadfamily}
 \item $\Gfrak^{\downarrow_1}_c$ is the class of connected graphs without closed paths whose vertices have 
exactly one edge directed from it (the class of rooted trees).
 \label{operadfamily}
 \item $\Gfrak^{\downarrow^1_1}_c$ is the class of connected graphs without directed paths whose vertices have exactly one edge directed towards it and exactly one edge directed from it (the class of ladder graphs).
 \label{algebrafamily}
\end{enumerate}
We observe that $\Gfrak^{\downarrow^1_1}_c\subset\Gfrak^{\downarrow_1}_c\subset\Gfrak^{\downarrow}_{c,0}\subset\Gfrak^{\downarrow}_c\subset\Gfrak^{\downarrow}\subset\Gfrak^{\circlearrowleft}$. When depicting graphs of the classes \eqref{propfamily}-\eqref{algebrafamily} we think of them as having a global flow, from global input edges to global output edges, downwards.

From now on let $\Gfrak^{*}$ denote an arbitrary class of the classes  \eqref{wheeledpropfamily}-\eqref{algebrafamily}. We denote by $\Gfrak^*(m,n)$ the subclass of $\Gfrak^*$ consisting of all $(m,n)$-graphs and by $\Gfrak^*_{(i)}$ the subclass consisting of graphs with $i$ vertices.

\subsection{Subgraphs}

In order to describe the associativity of the compositions described by graphs we need to define subgraphs and the notion of contraction of a subgraph in a graph.

Loosely speaking, a subgraph consists of some subset of the vertices of a graph, all edges of the edges in the original graphs attached to the subset of vertices, and an arbitrary global labeling.

 Let $G$ be a graph. A \emph{subgraph} $H$ of $G$ is a graph satisfying
\begin{enumerate}
 \item \label{subgraph-vertexedgeinclusion} $V_H\subset V_G$ and $E_H\subset E_G$,
 \item \label{subgraph-edgepreservation} if $\Phi_G(e)=(u,v)$ and $u,v\in V_H$, then $e\in E_H$ and $\Phi_H(e)=\Phi_G(e)$,
 \item \label{subgraph-internaltoexternal} if $e\in E_G$, $\Phi_G(e)=(u,v)$, $u\notin V_H$ and $v\in V_H$, then $e\in  E^{\inp}_H$ and $\Phi_H(e)=v$, similarly if $u\in V_H$, $v\notin V_H$, then $e\in  E^{\outp}_H$ and $\Phi_H(e)=u$,
 \item \label{subgraph-externaltoexternal} if $e\in  E^{\inp}_G$, $\Phi(e)=v$ and $v\in V_H$, then $e\in E^{\inp}_H$, similarly if $e\in  E^{\outp}_G$
 \end{enumerate}
Note that $\inp_H$ and $\outp_H$ are arbitrary labelings of the global input and output edges of $H$.


\subsection{Contraction of subgraphs}

The contraction of a subgraph in a graph can be thought of as replacing all vertices and internal edges of the subgraph with a single vertex.

 Let $H$ be a subgraph of a graph $G$. The \emph{contraction of $H$ in $G$} is the labeled directed graph $G/H$ defined by the same data as $G$ except
\begin{enumerate}
 \item $V_{G/H}=(V_G\setminus V_H)\sqcup\{v_H\}$, where by $v_H$ we denote the vertex into which $H$ is contracted,
 \item $E_{G/H}=E_G \setminus E^{\internal}_H$,
 \item \[\Phi_{G/H}(e)=
\begin{cases} \phantom{(}\Phi_G(e)  &\text{if $e\in E_G\setminus E_H$}\\
(u,v_H)  &\text{if $\Phi_G(e)=(u,v)$ for some $v\in V_H$} \\
(v_H,u)  &\text{if $\Phi_G(e)=(v,u)$ for some $v\in V_H$} \\
\phantom{(}v_H  &\text{if $\Phi_G(e)=v$ for some $v\in V_H$} .\\
\end{cases}\]
\end{enumerate}

We say that a subgraph $H$ of a graph $G\in\Gfrak^*$ is $\Gfrak^*$-\emph{admissible} if both $G/H\in\Gfrak^*$ and $H\in\Gfrak^*$. In Figure \ref{contractionfigb} we see an example of a subgraph $H$ which is $\Gfrak^{\circlearrowleft}$-admissible but not $\Gfrak^{\downarrow}$-admissible.

\begin{figure}[h!]
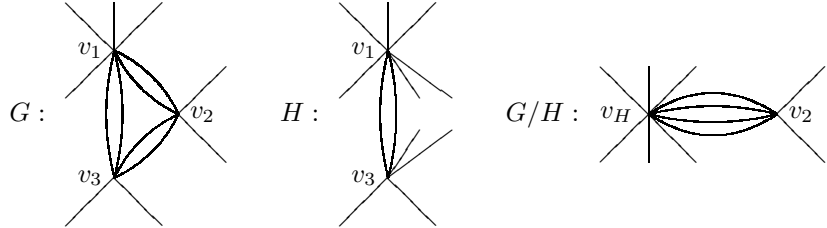

\[
  G:\shs \GraphG \bhs\bhs H:\shs \GraphHb \bhs\bhs G/H:\shs \GraphGHb
\]
\caption{\label{contractionfigb} The contraction $G/H$ of a subgraph $H$ in a graph $G$.}
\end{figure}

\subsection{Decorated graphs}

Compositions of elements of $\s$-bimodules is described by graphs decorated with $\s$-bimodules. When decorating a vertex $v$ with an element $p$ of an $\s$-bimodule $M$ we want to keep track of how we connect $p$ to the internal edges attached to $v$.

We define the set of \emph{local input edges} of $v$ to be
\[
 E^{\inp}_v:=\{e\in E^{\internal}_G \shs |\shs \Phi_G(e)=(u,v) \shs \text{for some $u\in V_G$} \}\cup \{e\in E^{\inp}_G\shs|\shs \Phi_G(e)=v\}
\]
and the set of \emph{local output edges} of $v$ as
\[
 E^{\outp}_v:=\{e\in E^{\internal}_G \shs|\shs \Phi_G(e)=(v,u) \shs \text{for some $u\in V_G$} \}\cup \{e\in E^{\outp}_G\shs|\shs \Phi_G(e)=v\},
\]
Note that we allow edges from a vertex to itself. Such an edge will be both a local input and output edge of the same vertex.

For two finite sets $I$,$J$ of the same cardinality, let $\Bij(I,J)$ denote the set of bijections from $I$ to $J$ and let $\langle I\isoto J\rangle$ denote the vector space generated over $\K$ by $\Bij(I,J)$. If $|I|=n$ there is a natural left action of $\s_{n}$ on $\langle I\isoto [n]\rangle$ given by $\tau g:=\tau\circ g$ for $g\in\Bij(I ,[n])$ and $\tau\in\s_{n}$. Similarly, if $|J|=m$, $\s_{m}$ acts to the right on $\langle [m]\isoto J \rangle$ by $f \sigma:=f \circ\sigma$ for $f\in\Bij([m],J)$ and $\sigma\in\s_{m}$.

We define a vector space by
\[
 M( E^{\outp}_v, E^{\inp}_v):=\langle[m] \isoto  E^{\outp}_v\rangle\otimes_{\s_m} M(m,n)\otimes_{\s_n}\langle E^{\inp}_v \isoto [n]\rangle ,
\]
where $m$ and $n$ are the cardinalities of $ E^{\outp}_v$ and $ E^{\inp}_v$, respectively. Often we will denote an element $f\otimes_{\s_m} p \otimes_{\s_n} g\in M( E^{\outp}_v, E^{\inp}_v)$ by $\overline{p}$ or simply $p$.

\begin{rem*}
 Decorating by $M( E^{\outp}_v, E^{\inp}_v)$ rather than by $M(m,n)$ corresponds to an additional labeling of the internal edges locally, cf.~Figure 1 of \cite[p4869]{Vallette2007b}.
\end{rem*}

We want decorated graphs to extend the notion of tensor products, but for a general graph there is no natural ordering of the vertices.

Let $\{V_i\}_{i\in I}$ be a family of vector spaces indexed by some finite set $I$ with $|I|=k$. The \emph{unordered tensor product} of this family is defined to be
\[
 \bigotimes_{i\in I} V_i:=\left(\bigoplus_{s \in \Bij([k],I)}V_{s(1)}\otimes\dotsb\otimes V_{s(k)}\right)_{\s_k}.
\]
Here we consider the coinvariants with respect to the right action of $\s_k$ on $\Bij([k],I)$. We denote an equivalence class in $\bigotimes_{i\in I} V_i$ by $[v_1\otimes\dotsb\otimes v_k]$, where $v_1\otimes\dotsb\otimes v_k\in V_{s(1)}\otimes\dotsb\otimes V_{s(k)}$ for some $s\in\Bij([k],I)$.

\begin{defn*}
We define the vector space of decorations of a graph $G$ by an $\s$-bimodule $M$ to be $G\langle M \rangle:=\bigotimes_{v\in V_G} M( E^{\outp}_v, E^{\inp}_v)$.

An element of $G\langle M \rangle$ is called a \emph{decorated graph} and is denoted by $(G,[p_1\otimes \dotsb \otimes p_k])$.
\end{defn*}

A \emph{decorated subgraph} of a decorated graph $(G,[p_1\otimes \dotsb \otimes p_k])$ is a decorated graph $(H,[p_{i_1}\otimes \dotsb \otimes p_{i_l}])$ such that $H$ is a subgraph of $G$, $\{i_1,\dotsc, i_l\}=\{i\in [k]\shs | \shs p_i\in M( E^{\outp}_v, E^{\inp}_v) \shs \text{for some $v\in V_H$}\}$, and $i_1 < \dotsb < i_l$.

We define the vector space of decorated $(m,n)$-graphs of $\Gfrak^{*}$ by
\[
 \Gfrak^{*}\langle M \rangle(m,n):= \bigoplus_{G\in\Gfrak^{*}(m,n)}G\langle M \rangle.
\]
There is a natural $(\s_m,\s_n)$-bimodule structure on $\Gfrak^{*}\langle M \rangle(m,n)$ induced by the actions of $\s_m$ and $\s_n$ on $G$, $\sigma( G,[p_1\otimes \dotsb \otimes p_k])\tau:=(\sigma G \tau,[p_1\otimes \dotsb \otimes p_k])$. Thus $\Gfrak^{*}\langle M \rangle(m,n)$ is naturally an $(\s_m,\s_n)$-bimodule. This lets us define the $\s$-bimodule of decorated graphs
\[
 \Gfrak^{*}\langle M \rangle:=\{\Gfrak^{\circlearrowleft}\langle M \rangle(m,n)\}_{m,n\in\N}.
\]

\subsection{$\Gfrak^*$-algebras}
\label{Galgebras}

We are now ready to define the compositions in our generalized structures.

Let $\mu\colon\Gfrak^*\langle M \rangle\to M$ be a homomorphism of $\s$-bimodules. We call such a morphism a \emph{composition product} in $M$. Denote by $\mu_G\colon G\langle M\rangle \to M$ the restriction of $\mu$ to $G\langle M\rangle$. We will write $\mu_G(p_1\otimes \dotsb \otimes p_k)$ for $\mu((G,[p_1\otimes \dotsb \otimes p_k]))$.

Given an $(r,s)$-subgraph $H$ of a graph $G$ we define the morphism
\[
\mu^G_H\colon G\langle M\rangle \to G/H \langle M\rangle
\]
by
\begin{multline*}
 \mu^G_H(G,[\olp_1\otimes\dotsb\otimes \olp_k \otimes \olq_{1}\otimes\dotsb\otimes \olq_{l}]):=\\
(G/H,[( \outp_H^{-1}\otimes_{\s_r}\mu_{H}(\olp_1\otimes\dotsb\otimes \olp_k)\otimes_{\s_s} \inp_H^{-1})\otimes \olq_{1}\otimes\dotsb\otimes \olq_{l}]),
\end{multline*}
where $[\olp_1 \otimes\dotsb\otimes \olp_k]$ is the decoration of $H$.

\begin{defn*}
A \emph{$\Gfrak^*$-algebra} is an $\s$-bimodule $M$ together with a composition product
$\mu:\Gfrak^*\langle M \rangle\to M$ satisfying the associativity condition
 \[
 \mu_G=\mu_{G/H}\circ\mu^G_H
 \]
for each $G\in\Gfrak^*$ and each $\Gfrak^*$-admissible subgraph $H$ of $G$.
\end{defn*}

\subsection{$\Gfrak^*$-coalgebras}
\label{Gcoalgebras}

Let $\Delta\colon M \to \Gfrak^*\langle M \rangle$ be a homomorphism of $\s$-bimodules. We call such a morphism a \emph{cocomposition coproduct} in $M$. Denote by $_G\Delta\colon M \to G\langle M\rangle $ the composition of $\Delta$ with the projection $\Gfrak^*\langle M \rangle \twoheadrightarrow G\langle M\rangle$.

Given an $(r,s)$-subgraph $H$ of a graph $G$ we define the morphism
\[
\mbox{$^{G}_H\hspace{-1pt}\Delta$} \colon G/H\langle M\rangle \to G \langle M\rangle
\]
by
\begin{multline*}
\mbox{$^{G}_H\hspace{-1pt}\Delta$}(G/H,[( \outp_H^{-1}\otimes_{\s_r} \ol{p}_H \otimes_{\s_s} \inp_H^{-1})\otimes \olq_{1}\otimes\dotsb\otimes \olq_{l}]):=\\
(G,[\olp_1\otimes\dotsb\otimes \olp_k \otimes \olq_{1}\otimes\dotsb\otimes \olq_{l}]),
\end{multline*}
where $(H,[\olp_1 \otimes\dotsb\otimes \olp_k])=\Delta_H(\olp_H)$.

\begin{defn*}
 A \emph{$\Gfrak^*$-coalgebra} is an $\s$-bimodule $M$ together with an $\s$-bimodule homomorphisms $\Delta:M\to\Gfrak^*\langle M \rangle$ satisfying the coassociativity condition
 \[
 _G\Delta=\mbox{$^{G}_H\hspace{-1pt}\Delta$} \circ \mbox{$_{G/H}\Delta$}
  \]
for each $G\in\Gfrak^*$ and $\Gfrak^*$-admissible subgraph $H$ of $G$.
\end{defn*}

\subsection{$\Gfrak^\downarrow$-(co)algebras versus $\Gfrak^\circlearrowleft$-(co)algebras}

Some notions related to $\Gfrak^*$-(co)algebras allow simpler expositions when one forgets about $\Gfrak^\circlearrowleft$-(co)algebras. Since we will only implicitly be needing  $\Gfrak^\circlearrowleft$-(co)algebras we avoid the subtleties related to them by restricting our attention to the strict subclasses of $\Gfrak^\circlearrowleft$; from now on let $\Gfrak^*$ be one of the subclasses \eqref{propfamily}-\eqref{algebrafamily} in \S \ref{familiesofgraphs}. See e.g.~\cite{Markl2006a,Merkulov2008} for a treatment of $\Gfrak^\circlearrowleft$-(co)algebras, also called wheeled props (without unit).

\subsection{Graphs decorated by several $\s$-bimodules}

Given a graph $G$ with $|V_G|>1$, we can decorate it by more than one $\s$-bimodule. Let $M_1,\dotsc , M_l$ be $\s$-bimodules and let $V_G=V_1\sqcup\dotsb\sqcup V_l$ be a partition of the set of vertices of $G$. We define the vector space
\[
 G\langle M_1^{V_1},\dotsc M_l^{V_l} \rangle:=\bigotimes_{v\in V_G} M_v,
\]
where $M_v=M_i(E^{\outp}_v,E^{\inp}_v)$ for $v\in V_i$.

Given morphisms of $\s$-bimodules $\theta_1\colon M_1\to N_1,\dotsc , \theta_l\colon M_l\to N_l$ we define the morphism
\[
(\theta_1^{V_1},\dotsc,\theta_l^{V_l})\colon G\langle  M_1^{V_1},\dotsc M_l^{V_l} \rangle \to G\langle  N_1^{V_1},\dotsc N_l^{V_l} \rangle
\]
 by
\[
 (G,[\ol{p}_1\otimes\dotsb\otimes \ol{p}_l])\mapsto  (G,[\ol{\theta_{i_1}(p_1)}\otimes\dotsb\otimes \ol{\theta_{i_k}(p_k)}]),
\]
where $\theta_{i_j}=\theta_r$ when $p_r\in M_r$.

\subsection{Units and counits}
\label{unitsandcounits}

We define the $\s$-bimodule $I$ by
\[
 \begin{cases}
  I(1,1)=\K &   \\
  I(m,n)=0  & \text{for $(m,n)\neq(1,1)$}
 \end{cases}.
\]

Let $G$ be a $(m,n)$-graph satisfying $| E^{\inp}_u|=| E^{\outp}_u|=1$ for all vertices $u\in V_G$ except for one vertex $v$ which then satisfies $| E^{\inp}_v|=n$ and $| E^{\outp}_v|=m$. The maps $\inp_G$ and $\outp_G$ naturally induce maps $\widetilde{\inp}_G\colon [n] \to  E^{\inp}_v$ and $\widetilde{\outp}_G \colon  E^{\outp}_v \to [m]$.

Let $M$ be an $\s$-bimodule. There exists a natural isomorphism 
\[
 G\langle I^{V_G\setminus \{v\}},M^{\{v\}} \rangle \isoto M(m,n)
\]
defined by
\[
 (G,[\ol{c}_1\otimes\dotsb\otimes \ol{c}_{k-1}\otimes \ol{p}])\mapsto (c_1\dotsb c_{k-1})\sigma^{-1} p \tau^{-1},
\]
where $\sigma\in\s_m$ and $\tau\in\s_n$ are permutations such that, for $\ol{p}=f\otimes_{\s_m}p\otimes_{\s_n}g\in M(E^{\outp}_v,E^{\inp}_v)$, $(\tau g) \circ \widetilde{\inp}_G=\Id_{[n]}$ and $\widetilde{\outp}_G \circ (f \sigma)=\Id_{[m]}$.

Let $\mu$ be a composition product in $M$ and let $\eta\colon I \to M$ be an $\s$-bimodule homomorphism. We say that $\eta$ is \emph{unit} with respect to $\mu$ if the following diagram commutes
\[
 \xymatrix{
G\langle I^{V_G\setminus \{v\}},M^{\{ v\}} \rangle \ar[rr]^(.6){(\eta^{V_G\setminus \{v\}},\Id_M^{\{v\}})} \ar@{->}[drr]^(.6)\sim && G\langle M \rangle \ar[d]^{\mu_G} \\
& & M
}.
\]

We denote the element $\eta(1)\in M(1,1)$ by $\bbone$. The above condition is then equivalent to that, for all $G$ as above, the morphism $\mu_G$ satisfies
\begin{equation}
\label{unitdef}
 \mu_G(\bbone\otimes\dotsb\otimes \bbone \otimes (f \otimes_{\s_m} p \otimes_{\s_n} g) \otimes \bbone\otimes\dotsb\otimes \bbone)=\sigma^{-1}p\tau^{-1}.
\end{equation}

On the coside, let $\Delta$ be a cocomposition coproduct in $M$ and let $\epsilon\colon M \to I$ be an $\s$-bimodule homomorphism. We say that $\epsilon$ is \emph{counit} with respect to $\Delta$ if the following diagram commutes
\[
 \xymatrix{
  M\ar[rr]^{_G\Delta} \ar@{->}[drr]^\sim && G\langle M \rangle \ar[d]^{(\epsilon^{V_G\setminus \{v\}},\Id_M^{\{v\}})} \\
& &  G\langle I^{V_G\setminus \{v\}},M^{\{ v\}}\rangle
}.
\]

\subsection{Unital $\Gfrak^*$-algebras and $\Gfrak^*$-coalgebras}

If there exists a morphism $\eta \colon I\to M$, which is a unit with respect to $\mu$, we call the data $(M,\mu,\eta)$ a \emph{unital $\Gfrak^*$-algebra}.

If there exists a morphism $\epsilon \colon M \to I$, which is a counit with respect to $\Delta$, we call the data $(M,\Delta,\epsilon)$ a \emph{counital $\Gfrak^*$-coalgebra}.

\begin{defn*}
We have
\begin{enumerate}
 \item a (co)unital $\Gfrak^{\downarrow}$-(co)algebra is called a \emph{(co)prop},
 \item a (co)unital $\Gfrak^{\downarrow}_c$-(co)algebra is called a \emph{(co)properad},
 \item a (co)unital $\Gfrak^{\downarrow}_{c,0}$-(co)algebra is called a \emph{(co)dioperad},
 \item a (co)unital $\Gfrak^{\downarrow_1}_c$-(co)algebra such that $M(m,n)=0$ if $m\neq 1$ is called an \emph{(co)operad},
 \item a (co)unital $\Gfrak^{\downarrow^1_1}_c$-(co)algebra such that $M(m,n)=0$ if $m,n\neq 1$ is called an \emph{(co)associative (co)algebra}.
\end{enumerate}
\end{defn*}

In \S \ref{galgvsoperad} we show how the above definitions relate to the classical ones.

\subsection{Homomorphisms of $\Gfrak^*$-algebras}
\label{homsofgalgebras}

Let $G$ be graph in $\Gfrak^{*}$ and $v$ be a vertex of $G$. A homomorphism $\theta\colon M \to M'$ of $\s$-bimodules canonically gives rise to a morphism $\theta_v\colon M( E^{\outp}_v, E^{\inp}_v)\to M'( E^{\outp}_v, E^{\inp}_v)$ by
\[
\theta_v(f \otimes_{\s_m} p \otimes_{\s_n} g):=f \otimes_{\s_m} \theta_{m,n}(p) \otimes_{\s_n} g.
\]
We will write $\theta(p)$ for $\theta_v(f\otimes p \otimes g)$. This further extends to a morphism $\theta_G \colon G\langle M \rangle \to G\langle M' \rangle$ by
\[
\theta_G(G,[p_1\otimes \dotsb \otimes p_k]):=(G,\theta(p_1)\otimes\dotsb\otimes\theta(p_k)).
\]
Finally this gives us a morphism of $\s$-bimodules $\theta_{\Gfrak^{*}}\colon\Gfrak^{*}\langle M \rangle \to \Gfrak^{*}\langle M' \rangle$.

Let $(\Pcal,\mu_\Pcal,\eta_\Pcal)$ and $(\Qcal,\mu_\Qcal,\eta_\Qcal)$ be $\Gfrak^*$-algebras. A \emph{$\Gfrak^*$-algebra homomorphism} is a homomorphism of $\s$-bimodules $\theta\colon \Pcal \to \Qcal$ such that $\theta\circ\eta_{\Pcal}=\eta_{\Qcal}$ and for all decorated graphs $G\in\Gfrak^*$ we have $\theta \circ (\mu_\Pcal)_G = (\mu_\Qcal)_G \circ \theta_G$.

\subsection{The endomorphism $\Gfrak^*$-algebra}
We define the endomorphism $\Gfrak^*$-algebra $\End^*_V$ by $\End^*_V(m,n):=\Hom(V^{\otimes n},V^{\otimes m})$. The $(\s_m,\s_n)$ action is given by permuting the input and output. For a graph $G\in\Gfrak^*$, the composition product $\mu_G\colon G\langle \End^*_V \rangle \to \End^*_V$ is defined as the composition of multivariate functions according to $G$. The local labelings of the vertices dictate, in an obvious way, which output is to be plugged into which input of functions decorating adjacent vertices. The global labeling plays a similar role. A unit $\eta\colon I \to \End^*_V$ is given by $\eta(1):=\Id_V$.

\subsection{Representations of $\Gfrak^*$-algebras}

A \emph{representation} of a $\Gfrak^*$-algebra $\Pcal$ in a vector space $V$ is a homomorphism $\rho\colon\Pcal\to\End_V$ of $\Gfrak^*$-algebras. We say that $\rho$ gives $V$ the structure of a \emph{$\Pcal$-algebra}.

We can think of a $\Pcal$-algebra as an assignment of multilinear operations on $V$, possibly with several inputs and outputs, satisfying axioms encoded by the composition product in $\Pcal$.
\section{Resolutions of $\Gfrak^*$-algebras}
\label{galgebraresolutions}

In this section we make definitions of $\Gfrak^*$-algebras presented by generators and relations. To this end we describe the free $\Gfrak^*$-algebra. We also set up the differential graded framework and describe two kinds of resolutions of $\Gfrak^*$-algebras. One kind of resolution is based on an extension of the Koszul duality theory for associative algebras to $\Gfrak^*$-algebras. As the absence of wheels in directed graphs makes a more accessible presentation possible, we restrict our attention in this section to the strict subfamilies of $\Gfrak^\circlearrowleft$, i.e.~in this section $\Gfrak^*$ denotes one of the subfamilies \eqref{propfamily}-\eqref{algebrafamily} defined in \S \ref{familiesofgraphs}.

This section contains no new material; we merely wish to express the results we need from \cite{Ginzburg1994},\cite{Fresse2004},\cite{Gan2003} and \cite{Vallette2007b} in the unifying language of \cite{Merkulov2007a}.

\subsection{Differential graded $\s$-bimodules}

We can also define $\Gfrak^*$-(co)algebras in the differential graded framework.

A \emph{graded $\s$-bimodule} is an $\s$-bimodule $M$ which can be decomposed as $M(m,n):= \bigoplus_{i\in \Z}M(m,n)^i$. We denote by $M^i$ the collection $\{M(m,n)^i\}_{m,n\in\N}$. For an element $p\in M^i$ we write $|p|=i$, and say that $p$ is of degree $i$. We will refer to this degree as the \emph{cohomological degree}.

A \emph{homomorphism $\theta\colon M \to N$ of graded $\s$-bimodules} of degree $j$ is a homomorphism of $\s$-bimodules satisfying $\theta(M^i)\subset N^{i+j}$.

A \emph{differential graded} (dg) \emph{$\s$-bimodule} is a pair $(M,d)$, where $M$ is a graded $\s$-bimodule and $d\colon M \to M$ is a homomorphism of graded $\s$-bimodules of degree one satisfying $d^2=0$.

A \emph{homomorphism $\theta$ of dg $\s$-bimodules} is a homomorphism of graded $\s$-bimodules satisfying $d \circ \theta = \theta \circ d$.

In the differential graded framework we apply the Koszul-Quillen sign rules; whenever a symbol of degree $a$ is moved past a symbol of degree $b$ the sign $(-1)^{ab}$ is introduced.

\subsection{Differential graded  $\Gfrak^*$-algebras and $\Gfrak^*$-coalgebras}
\label{dgalgsandcoalgs}

The differential $d$ of a dg $\s$-bimodule $M$ extends to a differential $d^G$ on the vector space $G\langle M \rangle$ defined by
\[
d^G (G,[p_1\otimes \dotsb \otimes p_k]):=(G,[\sum_{i=1}^{k}(-1)^{|p_1|+\dotsb |p_{i-1}|} p_1\otimes \dotsb \otimes d(p_i) \otimes \dotsb \otimes p_k]).
\]
The grading of $M$ induces a grading on $G\langle M \rangle$ given by $|(G,[p_1\otimes\dotsb\otimes p_k])|=|p_1|+\dotsb+|p_k|$. Together this makes $\Gfrak^*\langle M \rangle$ into a dg $\s$-bimodule.

\begin{defn*}
A \emph{dg $\Gfrak^*$-algebra} is a triple $((\Pcal,d),\mu,\eta)$ where $(\Pcal,\mu,\eta)$ is a $\Gfrak^*$-algebra, $(\Pcal,d)$ is a dg $\s$-bimodule, and $\mu$ is degree zero morphism of dg $\s$-bimodules.
\end{defn*}

Explicitly, the condition that $\mu$ is a morphism of dg $\s$-bimodules is given by $d \mu_G= \mu_G d^G$ for all $G\in\Gfrak^*$. We say that a morphism $d\colon \Pcal\to\Pcal$ is a \emph{$\Gfrak^*$-algebra derivation} if this condition is satisfied.

\begin{defn*}
 A \emph{dg $\Gfrak^*$-coalgebra} is a triple $((\Ccal,d),\Delta,\epsilon)$ where $(\Ccal,\Delta,\epsilon)$ is a $\Gfrak^*$-coalgebra, $(\Ccal,d)$ is a dg $\s$-bimodule, and $\Delta$ is a degree zero morphism of dg $\s$-bimodules.
\end{defn*}

The last condition can be expressed by $_G\Delta d = d^G \mbox{$_G\Delta$}$ for all $G\in\Gfrak^*$. We call an $\s$-bimodule homomorphism $d\colon \Ccal \to \Ccal$ a \emph{$\Gfrak^*$-coalgebra coderivation} if it satisfies this condition.

\subsection{Weight graded $\s$-bimodules and $\Gfrak^*$-(co)algebras}

We will need to consider an extra grading on the objects we study. We call a dg $\s$-bimodule $M$ \emph{weight graded} if it has a decomposition $M=\bigoplus_{s\in\N}M_{(s)}$, where each $M_{(s)}$ is a  dg sub-$\s$-bimodule. This is an extra grading which differs from the cohomological degree in that it does not effect signs, i.e.~the Koszul-Quillen sign rules only apply to homological degree. We call $M_{(s)}$ the weight $s$ part of $M$. A tensor product of weight graded $\s$-bimodules inherits a weight grading by $(M\otimes N)_{(t)}=\bigoplus_{r+s=t}M_{(r)}\otimes N_{(s)}$.

We call a $\Gfrak^*$-algebra $(\Pcal,\mu,\eta)$ \emph{weight-graded} if $\Pcal$ is a weight graded $\s$-bimodule and $\mu$ preserves the weight grading. Note that we necessarily have $\eta(I)\subset \Pcal_{(0)}$.

Similarly we call a $\Gfrak^*$-coalgebra $(\Ccal,\Delta,\epsilon)$ \emph{weight graded} if $\Ccal$ is a weight graded $\s$-bimodule and $\Delta$ preserves the weight grading.

\subsection{Free $\Gfrak^*$-algebras}
\label{freeGalgs}

The free $\Gfrak^*$-algebra $\Free^*(M)$ on an $\s$-bimodule $M$ is characterized by the classical universal property; there is a natural inclusion $\iota\colon M\to\Free^*(M)$ such that given any homomorphism of $\s$-bimodules $\theta\colon M \to \Pcal$ to the underlying $\s$-bimodule of $\Gfrak^*$-algebra, there is a unique extension $\tilde{\theta}$ making the following diagram commute
\[\xymatrix{
 M\ar[dr]^{\iota} \ar[rr]^{\theta}&  & \Pcal \\
 & \Free^*(M)\ar@{-->}[ur]^{\tilde{\theta}} &
}.\]

Here we give an explicit construction. The free non-unital $\Gfrak^*$-algebra, $\Free^*(M)$, on an $\s$-bimodule $M$ has $\Gfrak^*\langle M \rangle$ as underlying $\s$-bimodule. The composition product $\mu\colon\Gfrak^*\langle\Free^*(M)\rangle\to\Free^*(M)$ maps a graph decorated with graphs decorated with $M$ to a graph decorated with $M$. Intuitively we may thing of this composition product as grafting the external edges of the decorating graphs together according to the internal edges of the graph they decorate, leaving the decoration by $M$ unchanged, except for a minor modification of the internal labeling.

To be more precise, for a graph $G\in\Gfrak^*$, the morphism $\mu_G$ maps
\[
(G,[\ol{(G_1,[\olp^1_1\otimes\dotsb\otimes \olp^1_{k_l}])}\otimes\dotsb\otimes \ol{(G_k,[\olp^k_1\otimes\dotsb\otimes \olp^k_{l_k}])}] ) \in G\langle(\Gfrak^*\langle M \rangle ) \rangle
\]
to
\[
(G(G_1,\dotsc,G_k),[\wtp^1_1\otimes\dotsb\otimes \wtp^k_{l_k}])\in G(G_1,\dotsc , G_k)\langle M \rangle,
\]
where $G(G_1,\dotsc,G_k)$ is the result of the grafting and $\wtp^a_b$ is equal to $\olp^a_b$ up to a modification of the labeling to keep track of how the $p^a_b$ connect to the grafted graph. We describe in detail the graph $G(G_1,\dotsc G_k)$ as well as the modification of the labeling in \S \ref{freegrafting} of the appendix.

Since it does not matter in which order we graft the edges, the associativity condition $\mu_G=\mu_{G/H}\circ\mu^G_H$ is immediate.

To define a unit of $\Free^*(M)$ we have to add a special graph, $|$, to $\Gfrak^*$ consisting of a single edge and no vertices. The space of decorations is defined as $|\langle M \rangle := \K$, in analogy with the tensor product of zero factors. We define the grafting $G(|,\dotsc,| , G', |,\dotsc,|):=\sigma^{-1} G' \tau^{-1}$, where $\sigma$ and $\tau$ are defined as in~\eqref{unitdef} in \S \ref{unitsandcounits}. The unit is then defined by $\bbone:=(|,[1])$.

The inclusion $\iota \colon M \to \Free^*(M)$ is defined as follows, for an element $p\in M(m,n)$, its image $\iota(p)$ is the decorated one vertex $(m,n)$-graph $(G,[f\otimes_\s p \otimes_\s g])$ such that $g \circ \inp_G=\Id_{[n]}$ and $\outp_G \circ f=\Id_{[m]}$.

We will usually omit the $*$ and denote a free $\Gfrak^*$-algebra simply by $\Free(M)$ when it is clear which family of graphs we consider.

\subsection{Cofree $\Gfrak^*$-coalgebras}

The cofree $\Gfrak^*$-coalgebra on an $\s$-bimodule $M$ is characterized by the universal property obtained by reversing all arrows in the diagram characterizing free $\Gfrak^*$-algebras. Its underlying $\s$-bimodule is also $\Gfrak^*\langle M \rangle$. The cocomposition product $\Delta$ is defined as follows. Let $_G\Delta$ be the projection of $\Delta$ onto $G\langle M \rangle$, then for a decorated graph $X=(\widetilde{G},[p_1\otimes\dotsb\otimes p_k])$ the image of $X$ under $_G\Delta$ is the sum over all decorated graphs $Y=(G,[(G_1,[p^1_1\otimes\dotsb\otimes p^1_{k_l}])\otimes\dotsb\otimes (G_k,[p^k_1\otimes\dotsb\otimes p^k_{l_k}])])$ such that $\mu(Y)=X$ in the free $\Gfrak^*$-algebra on $M$. The counit is given by $\epsilon\colon (|,[1])\to 1$ and zero otherwise.

\subsection{Quadratic $\Gfrak^*$-algebras}

As for associative algebras we want to give presentations of $\Gfrak^*$-algebras in terms of generators and relations.

An \emph{ideal} of a $\Gfrak^*$-algebra $\Pcal$ is a sub-$\s$-bimodule $\Jcal$ satisfying $\mu_G(p_1\otimes \dotsb \otimes p_k)\in\Jcal$ whenever at least one of the $p_i$ is in $\Jcal$. We denote the ideal generated by a subset $J \subset\Pcal$ by $(J)$.

Let $\Pcal$ be a $\Gfrak^*$-algebra and $\Jcal$ be an ideal of $\Pcal$. The \emph{quotient $\Gfrak^*$-algebra} $\Pcal/\Jcal$ is defined by  $\Pcal/\Jcal(m,n):=\Pcal(m,n)/\Jcal(m,n)$. If $\Pcal$ is weight graded and the ideal $\Jcal$ is homogeneous with respect to this weight grading, i.e.~$\Jcal=\bigoplus_{s \in \N}\Jcal_{(s)}$ and $\Jcal_{(s)}=\Jcal\cap\Pcal_{(s)}$, then the quotient $\Pcal/\Jcal$ inherits a weight grading from $\Pcal$.

The free $\Gfrak^*$-algebra has a natural weight grading by the number of vertices of a decorated graph, $\Free^*(M)=\bigoplus_{s \in \N}\Free^*_{(s)}(M)$, where $\Free^*_{(s)}(M):=\Gfrak^*_{(s)}\langle M \rangle$. 

\begin{defn*}
 A \emph{quadratic $\Gfrak^*$-algebra} is a $\Gfrak^*$-algebra $\Pcal=\Free^*(M)/(R)$, where $R \subset\Free^*_{(2)}(M)$.
\end{defn*}

\begin{ex}[Dioperad of Lie bialgebras]
\label{liebi}
Let $M$ be the $\s$-bimodule given by $M(1,2)=\bbone_1\otimes\sgn_2$, $M(2,1)=\sgn_2\otimes\bbone_1$, and $M(m,n)=0$ for other $m,n$. We denote a graph decorated with the natural basis element of $M(1,2)$ by $\Ysmall$ and a graph decorated with the basis element of $M(2,1)$ by $\coYsmall$. Consider the quadratic dioperad $\Liebi=\QO{M}{R}$ where $R=R(1,3)\sqcup R(3,1) \sqcup R(2,2)$, with $R(i,j)\subset\Free(M)(i,j)$, is the following set of relations
\begin{align}
R(1,3): &\YY{1}{2}{3}+\YY{2}{3}{1}+\YY{3}{1}{2}  \bhs\bhs R(3,1): \coYcoY{1}{2}{3}+\coYcoY{2}{3}{1}+\coYcoY{3}{1}{2} \label{LBjacobicojacobi}\\
R(2,2): &\YcoY{1}{2}{1}{2}-\coYopY{1}{2}{1}{2}+\coYopY{2}{1}{1}{2}+\coYopY{1}{2}{2}{1}-\coYopY{2}{1}{2}{1}. \label{LBcompatibility}
\end{align}
A representation $\rho\colon\Liebi\to\End_V$ in a vector space $V$ makes $V$ into a Lie bialgebra. The Lie bracket is given by $\rho(\Ysmall)\colon V^{\otimes 2}\to V$ and the Lie cobracket by $\rho(\coYsmall)\colon V\to V^{\otimes 2}$. That $\rho$ is map of dioperads ensures that the Jacobi and co-Jacobi identities \eqref{LBjacobicojacobi} are satisfied as well as the compatibility of the brackets \eqref{LBcompatibility}.
\end{ex}
See e.g.~\cite{Ginzburg1994} for a treatment of quadratic operads, \cite{Gan2003} for quadratic dioperads and \cite{Vallette2007b} for quadratic properads and props.

\subsection{Connected $\Gfrak^*$-(co)algebras}

We call an $\s$-bimodule \emph{connected} if $M(m,0)=0$ for all $m$, $M(0,n)=0$ for all $n$, and $M(1,1)=\K$.

A weight graded $\s$-bimodule $M$ is called \emph{connected} if $M$ is connected as a dg $\s$-bimodule, $M_{(0)}(1,1)=\K$, and $M_{(0)}(m,n)=0$ for other $m,n$. We also require that if $r<0$, then $M_{(r)}(m,n)=0$ for all $m,n$.

We call a (weight graded) $\Gfrak^*$-(co)algebra \emph{connected} if the underlying $\s$-bimodule is connected.

\subsection{(Co)augmented $\Gfrak^*$-(co)algebras}

We can give the $\s$-bimodule $I$, with
\[
 \begin{cases}
  I(1,1)=\K &   \\
  I(m,n)=0  & \text{for $(m,n)\neq(1,1)$},
 \end{cases}
\]
a $\Gfrak^*$-algebra structure by defining the composition product $\mu_G(c_1\otimes\dotsb\otimes c_k):=c_1 \dotsb c_k$, the product of the scalars, the unit $\eta$ being the identity $I\to I$.

An \emph{augmentation} of a $\Gfrak^*$-algebra $\Pcal$ is a morphism of $\Gfrak^*$-algebras $\epsilon\colon \Pcal \to I$. We define the \emph{augmentation ideal} of $\Pcal$ by $\bar{\Pcal}(m,n):=\ker(\epsilon_{m,n})$.

We can also give the $\s$-bimodule $I$ a $\Gfrak^*$-coalgebra structure by $_G\Delta(c):=(G,[c \cdot1\otimes\dotsb\otimes 1])$, the counit $\epsilon$ being the identity $I\to I$.

A \emph{coideal} of a $\Gfrak^*$-coalgebra $\Ccal$ is a sub-$\s$-bimodule $\Jcal$ such that
\[
 \Delta_G(\Jcal)\subset\bigoplus_{v\in V_G}G\langle \Ccal^{V_G\setminus\{v\}},\Jcal^{\{v\}} \rangle.
\]

A \emph{coaugmentation} of a $\Gfrak^*$-coalgebra $\Ccal$ is a morphism of $\Gfrak^*$-coalgebras $\eta\colon I \to \Ccal$. We define the \emph{coaugmentation coideal} of $\Ccal$ by $\bar{\Ccal}(m,n):=\coker(\eta_{m,n})$.

The augmentation ideal of the free $\Gfrak^*$-algebra and the coaugmentation coideal of the cofree $\Gfrak^*$-coalgebra are given by the same $\s$-bimodule $\bar{\Free}^*\langle M \rangle=\bar{\Free}^{*,c}\langle M \rangle=\bigoplus_{s\geq 1}\Gfrak^*_{(s)}\langle M \rangle$.

\subsection{Suspension and desuspension}

 The \emph{suspension} $\Sigma M$ of a dg $\s$-bimodule $M$ is defined as $(\Sigma M)(m,n):=\K s\otimes M(m,n)$, where $s$ is an element of degree $1$. We define the \emph{desuspension} $\Sigma^{-1} M$ by $(\Sigma^{-1} M)(m,n):=\K s^{-1}\otimes M(m,n)$, where  $s^{-1}$ is an element of degree $-1$. Thus $(\Sigma M)_i=M_{i-1}$ and $(\Sigma^{-1} M)_i=M_{i+1}$.

\subsection{Derivations of free $\Gfrak^*$-algebras}
\label{derivationsoffree}

Let $\Free^*(M)$, be the free $\Gfrak^*$-algebra on an $\s$-bimodule $M$ and let $\theta\colon M \to \Free^*(M)$ be an $\s$-bimodule homomorphism. Such a morphism $\theta$ determines a $\Gfrak^*$-algebra derivation $_\theta d\colon\Free^*(M)\to\Free^*(M)$. The morphism $\theta$ is itself determined by morphisms $_G\theta\colon M \to G\langle M \rangle$, with $|V_G|\geq 1$. For a pair of graphs $H\subset G$ we define the morphism $\mbox{$^{\phantom{,}G}_H\hspace{-1pt}\theta$}\colon G/H\langle M \rangle \to G\langle M \rangle$ as in \S \ref{Gcoalgebras}, then $_\theta d$ defined by
\[
 _\theta d|_{\widetilde{G}\langle M \rangle} := \sum_{G/H=\widetilde{G}}\mbox{$^{\phantom{,}G}_H\hspace{-1pt}\theta$}
\]
can readily be checked to satisfy the derivation property. The above sum is over all pairs $H$, $G$ such that $H$ is an admissible subgraph of $G$ up to the global labeling of $H$ since \mbox{$^{\phantom{,}G}_H\hspace{-1pt}\theta$} is not dependent on this labeling. Since $_G\theta$ applied to a fixed element of $M$ is non-zero for only finitely many $G$ so is true also for \mbox{$^{\phantom{,}G}_H\hspace{-1pt}\theta$}.

Conversely a derivation $d$ of the free $\Gfrak^*$-algebra $\Free^*(M)$ is determined by its restriction $d|_M \colon M \to \Free^*(M)$. Indeed,
\begin{multline*}
 d(G,[p_1\otimes\dotsb\otimes p_k])=\\
d \mu_G((G_1,[p_1])\otimes\dotsb\otimes(G_k,[p_k])) = \mu_G d^G((G_1,[p_1])\otimes\dotsb\otimes(G_k,[p_k]))=\\
\sum_{i=1}^k(-1)^{(|p_1|+\dotsb+|p_{i-1}|)|d|} \mu_G(  (G_1,[p_1])\otimes\dotsb\otimes (G_i,[d p_i]) \otimes\dotsb\otimes(G_k,[p_k]))=\\
\sum_{i=1}^k(-1)^{(|p_1|+\dotsb+|p_{i-1}|)|d|} (G,[p_1\otimes\dotsb\otimes d p_i \otimes\dotsb\otimes p_k ]).
\end{multline*}
Here the one-vertex graphs $G_i$ and the local labelings of the $p_i$ are appropriately chosen so as to satisfy the above equalities as well as $(G_i,[p_i])\isomapsto p_i$ under the isomorphism defined in \S \ref{unitsandcounits}.

Combining the last two observations we conclude the following. 

\begin{prop}
There is a one-to-one correspondence between $\Gfrak^*$-algebra derivations of $\Free^*(M)$ and $\sbimodule$ homomorphisms  $M \to \Free^*(M)$.
\end{prop}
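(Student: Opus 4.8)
The statement has already been reduced, by the two observations preceding it, to writing down the two maps of the claimed bijection and checking that they are mutually inverse. In one direction, I would send a $\Gfrak^*$-algebra derivation $d$ of $\Free^*(M)$ to the composite $d\circ\iota\colon M\to\Free^*(M)$, where $\iota$ is the canonical inclusion of \S\ref{freeGalgs}; this is visibly a homomorphism of $\s$-bimodules (of the same degree as $d$, in the graded setting). In the other direction, I would send an $\s$-bimodule homomorphism $\theta\colon M\to\Free^*(M)$ to the morphism ${}_\theta d$ constructed above, determined by ${}_\theta d|_{\widetilde G\langle M\rangle}:=\sum_{G/H=\widetilde G}{}^G_H\theta$, the sum running over the pairs $H\subset G$ of graphs in $\Gfrak^*$ with $H$ admissible up to its global labeling and $G/H\cong\widetilde G$; by the remark preceding the statement this sum has finitely many nonzero terms on each decorated graph, so ${}_\theta d$ is a well-defined homomorphism of $\s$-bimodules.

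The one genuine verification is that ${}_\theta d$ is a derivation, i.e.~that $d\,\mu_G=\mu_G\,d^G$ holds for $d={}_\theta d$ and every $G\in\Gfrak^*$. I would prove this by expanding both sides as sums indexed by admissible subgraphs: on the left, a graph decorated with decorated graphs is first grafted together by $\mu_G$ and then differentiated; on the right, it is differentiated vertex by vertex and then grafted. The two resulting sums are matched term for term using the associativity $\mu_G=\mu_{G/H}\circ\mu^G_H$ of the grafting product of $\Free^*(M)$ (\S\ref{freeGalgs}) together with the description of ${}^G_H\theta$ as in \S\ref{Gcoalgebras}, the only delicate point being the Koszul--Quillen signs introduced when the symbol $d$ is moved past the decorations preceding the vertex on which it acts.

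It then remains to check the two composites. Starting from $\theta$ and restricting ${}_\theta d$ along $\iota$: since $\iota(p)$ is carried by a one-vertex graph $\widetilde G$, every pair $H\subset G$ contributing to the defining sum of ${}_\theta d$ satisfies $V_H=V_G$, and summing ${}^G_H\theta$ over these pairs simply reassembles $\theta(p)$ out of its components indexed by the underlying graph (up to the internal relabeling built into the isomorphism of \S\ref{unitsandcounits}), whence ${}_\theta d\circ\iota=\theta$. Starting from a derivation $d$ and setting $\theta:=d\circ\iota$: the displayed computation preceding the statement shows that, because every decorated graph $(G,[p_1\otimes\dotsb\otimes p_k])$ equals $\mu_G$ of one-vertex decorated graphs corresponding to the $p_i$ under the isomorphism of \S\ref{unitsandcounits}, the derivation property forces
\[
d(G,[p_1\otimes\dotsb\otimes p_k])=\sum_{i=1}^k(-1)^{(|p_1|+\dotsb+|p_{i-1}|)|d|}\,(G,[p_1\otimes\dotsb\otimes dp_i\otimes\dotsb\otimes p_k]),
\]
and the right-hand side is, term by term, exactly ${}_\theta d$ evaluated on the same decorated graph. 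Hence ${}_\theta d=d$, and the two assignments are inverse to one another.

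I expect the only real work to be bookkeeping: for a fixed decorated graph $(\widetilde G,[\,\cdots\,])$ one must pin down precisely which pairs $H\subset G$ with $G/H\cong\widetilde G$ contribute to ${}_\theta d$, and one must verify that the relabelings of internal edges and the signs produced when the graphs appearing in $\theta(p_i)$ are re-inserted into $G$ agree with those produced by $\mu_G\,d^G$. Conceptually none of this goes beyond the combinatorics already used to define ${}^G_H\theta$ and $\mu^G_H$, so the proof assembles the constructions recalled above rather than introducing anything new.
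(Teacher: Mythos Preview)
Your proposal is correct and follows the paper's approach exactly: the paper presents the proposition as an immediate consequence of the two preceding observations (that $\theta\mapsto{}_\theta d$ yields a derivation, and that any derivation is determined by its restriction to $M$), and you simply spell out the two maps and verify they are mutually inverse, which the paper leaves implicit. The extra bookkeeping you describe for checking that ${}_\theta d$ is a derivation is precisely what the paper dismisses with ``can readily be checked to satisfy the derivation property.''
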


\subsection{Coderivations of free $\Gfrak^*$-coalgebras}
\label{coderivationsofcofree}

Let $\Free^{*,c}(M)$ be the free $\Gfrak^*$-coalge-bra on an $\s$-bimodule $M$ and let $\theta\colon \Free^{*,c}(M) \to M$ be an $\s$-bimodule homomorphism. 
Such a $\theta$ determines a $\Gfrak^*$-coalgebra coderivation $d_\theta\colon\Free^{*,c}(M)\to\Free^{*,c}(M)$ as follows. The morphism $\theta$ is itself determined by morphisms $\theta_G\colon G\langle M \rangle \to M$. For a pair of graphs $H\subset G$, we define the morphism $\theta^G_H\colon G\langle M \rangle \to G/H\langle M \rangle$ as in \S \ref{Galgebras}, then $d_\theta$ defined by
\[
 d_\theta|_{G\langle M \rangle} := \sum_{H \subset G}\theta^G_H,
\]
can readily be checked to satisfy the coderivation property. Here the sum is over, up to the labeling of $H$, all $\Gfrak^*$-admissible subgraphs $H$ of $G$.

Conversely a coderivation $d$ of the cofree $\Gfrak^*$-coalgebra $\Free^{*,c}(M)$ is uniquely determined by the projection $\pi_M d \colon \Free^{*,c}(M)\to M$. First we observe that
\[
d( \Free^{*,c}_{(s)}(M))\subset \bigoplus_{r\leq s}\Free^{*,c}_{(r)}(M).
\]
This claim is verified by induction on the number of vertices. Now suppose $d|_{ \Free^{*,c}_{(r)}(M)}$ is known for all $r< s$ and consider $X=(G,[p_1\otimes\dotsb\otimes p_s])$. First we note that $d(X)$ is a sum of decorated graphs with at most $s$ vertices. Next for $G'\in\Gfrak^*_{(2)}$ we observe that $_{G'}\Delta d(X)$, if non-zero, consists of terms where either one of the vertices is decorated with $(|,[1])$ or both vertices are decorated with graphs with at most $s-1$ vertices. It is clear that in order to determine the part of $d(X)$ which consists of graphs with more than one vertex, it is enough to know the part of $_{G'}\Delta d(X)$ without trivially decorated vertices, for all $G'\in\Gfrak^*_{(2)}$. Thus if we consider only such terms in the equality $_{G'}\Delta d(X)=d^{G'}\mbox{$_{G'}\Delta$}(X)$, then in the right hand side $d$ is applied only to graphs with less than $s$ vertices and is therefore known by the induction assumption. Hence $d(X)$ is fully known if we only know the projection of $d$ to $\Free^{*,c}_{(1)}(M)\iso M$. We have proved the following.

\begin{prop}
There is a one-to-one correspondence between $\Gfrak^*$-coalgebra coderivations of $\Free^{*,c}(M)$ and $\sbimodule$ homomorphisms  $\Free^*(M)\to M$.
\end{prop}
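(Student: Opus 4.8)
The plan is to dualize, arrow by arrow, the argument for the preceding proposition on derivations, exploiting the explicit construction of $d_\theta$ given just above. For the first direction, start with an $\s$-bimodule homomorphism $\theta\colon\Free^{*,c}(M)\to M$, decompose it into its components $\theta_G\colon G\langle M\rangle\to M$, and form $d_\theta$ by $d_\theta|_{G\langle M\rangle}=\sum_{H\subset G}\theta^G_H$, the sum taken over $\Gfrak^*$-admissible subgraphs $H$ of $G$ up to their global labeling. First I would note this sum is finite (a fixed graph has finitely many subgraphs), so $d_\theta$ is well defined, and then verify the coderivation identity ${}_G\Delta\, d_\theta = d^G\, {}_G\Delta$ for all $G$ and all $\Gfrak^*$-admissible $H$. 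This is the bookkeeping step: expand both sides as sums indexed by pairs consisting of a subgraph to be contracted (coming from $\Delta$) and a subgraph on which $\theta$ acts, and match terms together with the Koszul--Quillen signs; the matching is precisely the arrow-reversed version of the computation displayed for $_\theta d$ in \S\ref{derivationsoffree}.

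For the converse, I would show that any coderivation $d$ of $\Free^{*,c}(M)$ satisfies $d=d_{\pi_M d}$, where $\pi_M d\colon\Free^{*,c}(M)\to M$ is the projection onto the weight-one part. This rests on the two facts sketched in the paragraph preceding the statement, both proved by induction on the number of vertices: (a) $d\bigl(\Free^{*,c}_{(s)}(M)\bigr)\subset\bigoplus_{r\le s}\Free^{*,c}_{(r)}(M)$, so $d$ does not raise weight; and (b) once $\pi_M d$ and $d|_{\Free^{*,c}_{(r)}(M)}$ for all $r<s$ are known, $d|_{\Free^{*,c}_{(s)}(M)}$ is determined, because the part of $d(X)$ living on graphs with more than one vertex can be read off from the components ${}_{G'}\Delta\,d(X)$ with $G'\in\Gfrak^*_{(2)}$ having no trivially decorated vertex, and the coderivation identity ${}_{G'}\Delta\,d(X)=d^{G'}\,{}_{G'}\Delta(X)$ rewrites these in terms of $d$ applied only to graphs of weight strictly below $s$.

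It then remains to check that $\theta\mapsto d_\theta$ and $d\mapsto\pi_M d$ are mutually inverse. In one composite, $\pi_M d_\theta$ extracts the weight-one part of $d_\theta$, namely the components $\theta^G_H$ with $G/H$ a single vertex, which reassemble to $\theta$ itself; in the other, $d_{\pi_M d}=d$ by the uniqueness in (b). I expect the genuine obstacle to be the verification that $d_\theta$ is a coderivation: one must check that, with the correct signs, the single sum over admissible subgraphs $H\subset G$ reproduces the two-step expansion obtained by first applying $\Delta$ and then $d^{G'}$, which amounts to controlling how contraction of one subgraph interacts with the $\theta$-contraction of another. Everything else is a purely formal transcription of \S\ref{derivationsoffree} together with the weight--induction argument already outlined before the statement.
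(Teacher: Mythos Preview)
Your proposal is correct and follows essentially the same approach as the paper. The paper's argument is given in the prose immediately preceding the proposition (ending with ``We have proved the following''): it constructs $d_\theta$ from $\theta$ via the sum over admissible subgraphs and states that the coderivation property ``can readily be checked,'' then establishes the converse by the same weight-nonincreasing observation and induction on the number of vertices that you describe in (a) and (b). Your addition of the explicit mutual-inverse check is a sensible tidying step that the paper leaves implicit.
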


\subsection{Quasi-free dg $\Gfrak^*$-algebras and $\Gfrak^*$-coalgebras}

A free $\Gfrak^*$-algebra on a dg $\s$-bimodule $(M,d)$ has a natural differential induced by $d$, as defined in \S \ref{dgalgsandcoalgs}. We will consider also free $\Gfrak^*$-algebras where the differential differs from the differential freely generated by $d$. We call a free $\Gfrak^*$-algebra $\Free^*(M)$ with a differential $_\theta\delta=d+\mbox{$_\theta d$}$, where $_\theta d$ is a derivation determined by a morphism $\theta\colon M \to \Free^*(M)$ (cf.~\S \ref{derivationsoffree}), a \emph{quasi-free} $\Gfrak^*$-algebra.

Similarly we call a cofree $\Gfrak^*$-coalgebra $\Free^{*,c}(M)$ \emph{quasi-cofree} if its codifferential is a sum $\delta_\theta=d+d_\theta$ of the codifferential induced by the one on $M$ and a coderivation $d_\theta$ determined by a morphism $\theta\colon \Free^*(M)\to M$ (cf.~\S \ref{coderivationsofcofree}).

\subsection{Quasi-free resolutions}

A \emph{quasi-free resolution} of dg $\Gfrak^*$-algebra $(\Pcal,\delta)$ is a quasi-free $\Gfrak^*$-algebra $(\Free^*(M),d+\mbox{$_\theta d$})$ together with a quasi-isomorphism $\phi\colon\Free^*(M)\\\to\Pcal$. If \mbox{$_\theta d$} satisfies $\mbox{$_\theta d$}(M)\subset\bigoplus_{i\geq 2}\Free^*_{(i)}(M)$ we call the resolution \emph{minimal}.

\subsection{Bar and cobar constructions}

Let $\Pcal$ be a dg $\Gfrak^*$-algebra. Consider the cofree $\Gfrak^*$-coalgebra $\Free^{*,c}(\Sigma^{-1}\bar{\Pcal})$. It comes equipped with the codifferential $d$ induced by the differential of $\Pcal$, cf.~\S \ref{dgalgsandcoalgs}. The restriction of $\mu_\Pcal$ to $\Gfrak^*_{(2)}\langle \Pcal \rangle$ induces a degree one morphism $\theta\colon\Gfrak^*_{(2)}\langle \Sigma^{-1}\bar{\Pcal} \rangle\to \Sigma^{-1}\bar{\Pcal}$. By \S \ref{coderivationsofcofree}, the morphism $\theta$ determines a coderivation $_\theta d$ of $\Free^{*,c}(\Sigma^{-1}\bar{\Pcal})$. The associativity of $\mu_\Pcal$ implies that $_\theta d^2=0$. That $\Pcal$ is a dg $\Gfrak^*$-algebra implies that $d (\mbox{$_\theta d$}) + (\mbox{$_\theta d$}) d=0$. Thus we see that $\delta:=d+\mbox{$_\theta d$}$ satisfies $\delta^2=0$. We define the \emph{bar construction} of $\Pcal$ to be the quasi-cofree $\Gfrak^*$-coalgebra $\BC^*(\Pcal):=(\Free^{*,c}(\Sigma^{-1}\bar{\Pcal}),\delta)$.

Now let $\Ccal$ be a dg $\Gfrak^*$-coalgebra. We define the \emph{cobar construction} of $\Ccal$ to be the quasi-free $\Gfrak^*$-algebra $\CBC^*(\Ccal):=(\Free^{*}(\Sigma\bar{\Ccal}),\delta)$ where the differential $\delta:=d+d_\theta$ is defined as follows. The $\Gfrak^*$-algebra $\Free^{*}(\Sigma\bar{\Ccal})$ has a differential $d$ induced by the codifferential of $\Ccal$, cf.~\S \ref{dgalgsandcoalgs}. The projection of $\Delta_\Ccal$ to $\Gfrak^*_{(2)}\langle \Ccal \rangle$ induces a degree one morphism $\theta\colon\Sigma\bar{\Ccal}\to\Gfrak^*_{(2)}\langle \Sigma\bar{\Ccal} \rangle$. By \S \ref{derivationsoffree}, $\theta$ determines a derivation $d_\theta$ of $\Free^{*}(\Sigma\bar{\Ccal})$. The coassociativity of $\Delta$ implies that $d_\theta ^2=0$.  That $\Delta$ is a morphism of dg $\s$-bimodules implies that $d (d_\theta) + (d_\theta) d=0$. By the above observations we see that $\delta^2=0$.

When we do not want to emphasize which family of graphs we are considering we will usually omit the $*$ from the notation of the bar and cobar constructions.

\subsection{The bar-cobar resolution}
Applying first the bar and then the cobar construction to a $\Gfrak^*$-algebra $\Pcal$ yields a quasi-free resolution of $\Pcal$.

\begin{thm*}Let $\Pcal$ be a connected dg $\Gfrak^*$-algebra, where $\Gfrak^*$ is one of $\Gfrak^{\downarrow^1_1}_c$, $\Gfrak^{\downarrow_1}_c$, $\Gfrak^{\downarrow}_{c,0}$, and $\Gfrak^\downarrow_c$. In this case the morphism
\[
\Free^*(\Sigma\bar{\Free}^{*,c}(\Sigma^{-1} \bar{\Pcal}))\to\Pcal
\]
induced by the projection
\[
\Sigma\bar{\Free}^{*,c}(\Sigma^{-1} \bar{\Pcal})\to \Sigma\bar{\Free}^{*,c}_{(1)}(\Sigma^{-1} \bar{\Pcal})\iso\bar{\Pcal}\subset\Pcal
\]
induces a quasi-isomorphism of dg $\Gfrak^*$-algebras
\[
\CBC(\BC(\Pcal))\isoto \Pcal .
\]
\end{thm*}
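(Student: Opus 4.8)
The plan is to make $\CBC(\BC(\Pcal))$ completely explicit and then run a single spectral sequence which degenerates onto $H(\Pcal)$. Write $A:=\bar{\Pcal}$, so that $A(1,1)=0$ since $\Pcal$ is connected. An element of $\CBC(\BC(\Pcal))=\Free^*(\Sigma\bar{\Free}^{*,c}(\Sigma^{-1}A))$ is a two-level decorated graph: an outer graph $G\in\Gfrak^*$ each of whose vertices $v$ is decorated by a suspended element of $\bar{\Free}^{*,c}(\Sigma^{-1}A)$, i.e.\ by an inner graph $G_v\in\Gfrak^*$ with $|V_{G_v}|\geq 1$ decorated by $\Sigma^{-1}A$; grafting the $G_v$ along the internal edges of $G$ produces one total graph decorated by $A$ together with a nested decomposition of its vertices. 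The differential is $\delta=d+\mbox{$_\theta d$}+d_\theta$, where $d$ is the internal differential induced by $d_\Pcal$ (changing neither the total graph nor the decomposition), $\mbox{$_\theta d$}$ is the bar part — it contracts, via $\mu_\Pcal$, a single internal edge of some inner graph $G_v$, lowering the number $w$ of $A$-decorated vertices by one — and $d_\theta$ is the cobar part — it picks an outer vertex $v$ and a $\Gfrak^*$-admissible way of de-grafting $G_v$ into two pieces, splitting $v$ into two adjacent outer vertices carrying those pieces and leaving $w$ unchanged. That $\delta^2=0$ was already established in the discussion of the bar and cobar constructions. The map of the statement is the $\Gfrak^*$-algebra map extending the projection onto the weight-one summand $\Sigma\bar{\Free}^{*,c}_{(1)}(\Sigma^{-1}A)\iso A\subset\Pcal$; on that summand $\mbox{$_\theta d$}$ and $d_\theta$ vanish for arity reasons and $d$ restricts to $d_\Pcal$, so it is a chain map, and it visibly has a section.

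Next I would filter by $F_p:=\{w\leq p\}$. Since $d$ and $d_\theta$ preserve $w$ while $\mbox{$_\theta d$}$ strictly lowers it, each $F_p$ is a subcomplex; the filtration is bounded below and exhaustive, and, using the weight grading carried by the $\Gfrak^*$-algebras in question (which every part of $\delta$ respects and which bounds $w$ on each component), the associated spectral sequence converges. On $E^0$ the induced differential is $d+d_\theta$, so $(E^0,d+d_\theta)$ is the cobar construction of the \emph{cofree} $\Gfrak^*$-coalgebra $(\Free^{*,c}(\Sigma^{-1}A),d_\Pcal)$ — the bar construction of $\Pcal$ with the $\mu_\Pcal$-part of its codifferential deleted. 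The key lemma is then that the cobar construction of a cofree $\Gfrak^*$-coalgebra on a dg $\sbimodule$ $W$ is homotopy equivalent, through the canonical projection, to $\K$ in weight $0$ together with $(W,d_W)$ in weight $1$.

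Granting the lemma, $E^1\iso\K\oplus H(A)=H(\Pcal)$, concentrated in filtration degrees $0$ and $1$; the induced differential $d^1$ would run from $H(A)$ to $\K$, hence between different arities, hence vanishes, and all higher $d^r$ vanish for degree reasons; so the spectral sequence degenerates at $E^1$. Since the filtration degrees $0$ and $1$ live in disjoint arities, the extension is trivial, and because the whole computation was compatible with the projection of the first paragraph, that projection realizes the isomorphism $H(\CBC(\BC(\Pcal)))\iso H(\Pcal)$. This is the asserted quasi-isomorphism.

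The main obstacle is the key lemma: everything else is bookkeeping of graphs, Koszul signs and the suspensions $\Sigma^{\pm1}$, plus a routine convergence argument. The lemma — acyclicity of the cobar construction of a cofree $\Gfrak^*$-coalgebra away from its linear part — is the heart of the matter, and I would prove it by exhibiting an explicit contracting homotopy $h$ for $d_\theta$ on two-level graphs: informally, $h$ selects, by a fixed combinatorial rule (for instance a chosen total order on the vertices of the graphs involved), a pair of adjacent outer vertices and merges them by grafting their inner graphs along the shared outer edge, thereby undoing one application of $d_\theta$; one then checks $d_\theta h+h d_\theta=\mathrm{id}$ away from weights $0$ and $1$, and that $h$ commutes up to sign with $d$. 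Making the selection rule canonical and verifying that the signs coming from the suspensions and from reorderings inside the unordered tensor products are consistent is the delicate point, and it is exactly here that the combinatorics of the particular family $\Gfrak^*$ (ladder graphs, rooted trees, trees, or connected graphs) has to be handled; this is the step carried out in \cite{Ginzburg1994}, \cite{Gan2003} and \cite{Vallette2007b} for the respective families.
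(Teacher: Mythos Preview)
The paper does not prove this theorem: immediately after the statement it writes ``This was proved for operads in \cite{Ginzburg1994}, for dioperads in \cite{Gan2003}, and for properads in \cite{Vallette2007b}.'' and moves on. So there is no argument in the paper to compare your sketch against; you are being measured against the cited literature.

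Your outline is essentially the standard argument found there. The description of $\CBC(\BC(\Pcal))$ as two-level decorated graphs, the decomposition $\delta=d+\mbox{$_\theta d$}+d_\theta$, the filtration by the total number $w$ of $A$-decorated vertices so that the bar piece $\mbox{$_\theta d$}$ drops out on $E^0$, and the identification of $(E^0,d+d_\theta)$ with the cobar construction on the cofree $\Gfrak^*$-coalgebra $(\Free^{*,c}(\Sigma^{-1}A),d_\Pcal)$ are all correct. You also correctly isolate the actual content --- acyclicity of $\CBC$ applied to a cofree $\Gfrak^*$-coalgebra away from weight $\leq 1$ --- and send it back to the same references the paper cites. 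The degeneration at $E^1$ for arity reasons is fine.

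One small caveat: your convergence step invokes a ``weight grading carried by the $\Gfrak^*$-algebras in question'', but the theorem as stated assumes only that $\Pcal$ is connected, not weight graded, and in the properad case $\Gfrak^\downarrow_c$ the filtration by $w$ is genuinely unbounded in a fixed arity (chain together $(2,2)$-vertices along double edges). This does not actually break your argument --- once the key lemma gives $E^1$ concentrated in filtration degrees $0$ and $1$, the exhaustive bounded-below filtration forces $E^\infty=E^1$ and the extension is trivial --- but the sentence as written is not quite right. The cited references handle this either by imposing a weight grading on $\Pcal$ from the outset, or by constructing the contracting homotopy directly on the augmented complex, which is a pointwise construction and sidesteps spectral-sequence convergence altogether.
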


This was proved 
for operads in \cite{Ginzburg1994}, for dioperads in \cite{Gan2003}, and for properads in \cite{Vallette2007b}. The problem with the bar-cobar resolution is that it can be very difficult to compute explicitly. Fortunately there is a large class of $\Gfrak^*$-algebras for which there exists a more easily computable resolution.

\subsection{Koszul $\Gfrak^*$-algebras}

In addition to the weight grading given by the number of vertices, the cofree $\Gfrak^*$-coalgebra on a weight graded $\s$-bimodule $M$ inherits another weight grading, the total weight,
\[
 \Free^{*,c}(M)_{(s)}:=\bigoplus_{\substack{G\in\Gfrak^* \\ \{v_1\}\cup\dotsb\cup\{v_k\}=V_G\\ s_1+\dotsb + s_k=s}}G\langle (M_{(s_1)})^{v_1},\dotsc, (M_{(s_k)})^{v_k} \rangle.
\]
For a weight graded $\s$-bimodule $M$ concentrated in positive weight we observe that
\[
\begin{cases}
\Free_{(s)}^{*,c}(M)_{(s)}=\Free_{(s)}^{*,c}(M_{(1)})\\
\Free_{(r)}^{*,c}(M)_{(s)}=0 & \text{for $r>s$.}
\end{cases}
\]

Now consider the bar construction $\BC(\Pcal)$ on a connected weight graded $\Gfrak^*$-algebra $\Pcal$. By the above observations we see that $\BC(\Pcal)$ is bi-graded by the number of vertices and the total weight. We also observe that $\Sigma^{-1}\bar{\Pcal}$ is concentrated in positive weight since $\Pcal$ is connected. By construction we see that $\mbox{$_\theta d$}(\BC_{(r)}(\Pcal)_{(s)})\subset\BC_{(r-1)}(\Pcal)_{(s)}$. The compatibility of $_\theta d$ and $d$ yields a complex of dg $\s$-bimodules
\[
 0 \to \BC_{(s)}(\Pcal)_{(s)} \to \BC_{(s-1)}(\Pcal)_{(s)}  \to ...
\]
One can show that the weight graded sub-$\s$-bimodule given by
\[
(\Pcal^{\antishriek})_{(s)}:=\Ho_s(\BC^*_{(\bullet)}(\Pcal)_{(s)},\mbox{$_\theta d$})=\ker( \mbox{$_\theta d$}\colon\BC_{(s)}(\Pcal)_{(s)} \to \BC_{(s-1)}(\Pcal)_{(s)})
\]
is a weight graded sub-$\Gfrak^*$-coalgebra of $\BC^*(\Pcal)$. We call $\Pcal^{\antishriek}$ 
the \emph{Koszul dual} of $\Pcal$ and we say that $\Pcal$ is \emph{Koszul} if the inclusion $\Pcal^{\antishriek}\hookrightarrow \BC^*(\Pcal)$ is a quasi-isomorphism. It is shown in the above mentioned references that a Koszul $\Gfrak^*$-algebra is necessarily quadratic.
\begin{rem*}
 Note that the Koszul dual is defined as the \emph{homology} of $(\BC^*(\Pcal),\leftsub{\theta}{d})$ with respect to the weight grading. The codifferential raises the cohomological degree by one but lowers the weight by one. 
\end{rem*}

\subsection{The Koszul resolution}
\label{koszulresolution}

For Koszul $\Gfrak^*$-algebras we have the following result.
\begin{thm*}Let $\Pcal$ be a Koszul dg $\Gfrak^*$-algebra, where $\Gfrak^*$ is one of $\Gfrak^{\downarrow_1}_c$, $\Gfrak^{\downarrow}_{c,0}$, and $\Gfrak^\downarrow_c$. In this case the morphism of the bar-cobar resolution induces a quasi-isomorphism
\[
\CBC(\Pcal^{\antishriek})\isoto \Pcal .
\]
\end{thm*}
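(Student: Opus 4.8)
The plan is to deduce the statement from the bar-cobar resolution $\CBC(\BC(\Pcal))\isoto\Pcal$ of the previous subsection by replacing the (typically enormous) dg $\Gfrak^*$-coalgebra $\BC(\Pcal)$ with its quasi-isomorphic sub-$\Gfrak^*$-coalgebra $\Pcal^{\antishriek}$. Koszulness of $\Pcal$ says precisely that the inclusion $\iota\colon\Pcal^{\antishriek}\hookrightarrow\BC(\Pcal)$ is a quasi-isomorphism of connected weight graded dg $\Gfrak^*$-coalgebras, the weight being the total weight. Applying the (functorial) cobar construction produces a morphism of dg $\Gfrak^*$-algebras $\CBC(\iota)\colon\CBC(\Pcal^{\antishriek})\to\CBC(\BC(\Pcal))$, and a diagram chase through the universal properties of $\Free^*$ and $\Free^{*,c}$ — using the description of $\Pcal^{\antishriek}$ as a subcoalgebra of $\BC(\Pcal)$ — shows that the composite of $\CBC(\iota)$ with the bar-cobar quasi-isomorphism $\CBC(\BC(\Pcal))\isoto\Pcal$ coincides with the canonical map $\CBC(\Pcal^{\antishriek})\to\Pcal$ appearing in the statement (the one induced by projecting $\Sigma\bar{\Pcal^{\antishriek}}$ onto its weight-one component and including it into $\Pcal$). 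Hence it suffices to prove that $\CBC(\iota)$ is a quasi-isomorphism.

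This reduces the theorem to a \emph{comparison lemma}: the cobar construction takes any quasi-isomorphism $f\colon\Ccal\to\Dcal$ of connected weight graded dg $\Gfrak^*$-coalgebras to a quasi-isomorphism $\CBC(f)\colon\Free^*(\Sigma\bar{\Ccal})\to\Free^*(\Sigma\bar{\Dcal})$. To prove it, note that both sides are graded by the total weight and that $\CBC(f)$ preserves this grading; since $\bar{\Ccal}$ and $\bar{\Dcal}$ are concentrated in strictly positive weight, the total weight $s$ component of either cobar construction is built from graphs with at most $s$ vertices. On each weight component filter by the number of vertices. The cobar differential is $\delta=d+d_\theta$, where $d$ is induced by the codifferential of the coalgebra and leaves the number of vertices fixed, while $d_\theta$, built from the weight-two part of the coproduct, strictly raises it; thus the filtration is $\delta$-stable and finite on each weight component, giving a convergent spectral sequence. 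The associated graded differential is $d$, and since $\K$ has characteristic zero the free $\Gfrak^*$-algebra functor commutes with homology, so the first page is $\Free^*(\Sigma\overline{\Ho(\Ccal)})$ with the differential induced by $d_\theta$, and similarly for $\Dcal$. As $f$ is a quasi-isomorphism it induces an isomorphism $\Ho(\Ccal)\iso\Ho(\Dcal)$ of weight graded $\s$-bimodules, and as $f$ is a coalgebra morphism this isomorphism intertwines the two differentials induced by $d_\theta$; hence $\CBC(f)$ is an isomorphism on the first page, therefore on every later page, and therefore on homology in each weight component. Applying the lemma to $\iota$ completes the proof. This is the argument carried out for operads in \cite{Ginzburg1994}, for dioperads in \cite{Gan2003}, and for properads in \cite{Vallette2007b}, transcribed into the $\Gfrak^*$-language.

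The main point to get right is the spectral sequence step: one must check that every differential in sight preserves the total weight, that $\bar{\Ccal}$ really lies in weight $\geq 1$ so that the number-of-vertices filtration is bounded on each weight component (which is what makes the spectral sequence converge), and that passing to the first page commutes with applying $\Free^*$ — the last being exactly where characteristic zero enters, as $\Free^*$ is a direct sum of coinvariants of tensor products and such functors are exact over $\K$. A secondary, purely bookkeeping point is the identification of the composite $\CBC(\Pcal^{\antishriek})\to\CBC(\BC(\Pcal))\isoto\Pcal$ with the canonical map of the bar-cobar resolution, which is a diagram chase through the relevant universal properties together with the definition of $\Pcal^{\antishriek}\subset\BC(\Pcal)$.
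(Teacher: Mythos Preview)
The paper does not actually prove this theorem: it states it as a known result, referring (in the section's opening paragraph and after the bar-cobar theorem) to \cite{Ginzburg1994}, \cite{Gan2003}, and \cite{Vallette2007b} for the operadic, dioperadic, and properadic cases respectively. Your sketch is a faithful transcription of the standard argument from those references into the $\Gfrak^*$-language, so in that sense it agrees with what the paper relies on; the reduction to the comparison lemma via $\CBC(\iota)$ and the number-of-vertices filtration on each weight component is exactly the approach of those sources, and you have correctly identified the two points that need care (boundedness of the filtration from connectedness, and exactness of the free functor in characteristic zero).
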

For a Koszul $\Gfrak^*$-algebra $\Pcal$ we denote this resolution by $\Pcal_\infty$. Representations of $\Pcal_\infty$ yield strongly homotopy, also called infinity, versions of the algebras corresponding to $\Pcal$; e.g.~algebras over the operad $\Lie_\infty$ are called strongly homotopy Lie algebras or $\ELL_\infty$ algebras.

If $\Pcal$ is a Koszul $\Gfrak^*$-algebra with zero differential, then all we need to know in order to compute the differential of $\CBC(\Pcal^{\antishriek})$ is the structure of the cocomposition coproduct of $\Pcal^{\antishriek}$. Next we will consider a shortcut to determining this cocomposition coproduct.

\subsection{Koszul dual $\Gfrak^*$-algebras}
\label{koszuldualGalgebra}

To a quadratic $\Gfrak^*$-algebra there is an associated dual $\Gfrak^*$-algebra defined as follows.

Let $M$ be an $\s$-bimodule. The \emph{Czech dual} $\s$-bimodule $M^{\cz}$ of $M$ is defined by $M^{\cz}(m,n):=\sgn_m \otimes M(m,n)^*\otimes\sgn_n$. Now consider the free $\Gfrak$-algebra on a connected $\s$-bimodule $M$ satisfying in addition that $M(1,1)=0$ and that $M(m,n)$ is finite dimensional for all $(m,n)$. The components $\Free^*_{(s)}(M)(m,n)$ are then all finite dimensional and the linear dual $(\Free^*(M))^*$ is naturally isomorphic to $\Free^{*,c}(M^*)$ as $\Gfrak^*$-coalgebras. This isomorphism induces a pairing
\[
\langle\_ ,\_\rangle \colon \Free^*_{(2)}(M)\otimes\Free^*_{(2)}(M)\to\K
\]
defined by
\[
(G,[e^*_{a}\otimes e^*_{b}])\otimes (G',[e_{c}\otimes e_{d}])\mapsto \delta_{G,G'}\delta_{a,c}\delta_{b,d},
\]
where the $\delta$:s are Kronecker deltas, $\{e_i\}$ is a basis of $M$, $\{e^*_i\}$ the dual basis, and we assume in the case $G=G'$ that $e_a$ decorates the same vertex as $e_c$.

Now let $\Pcal=\QO{M}{R}$ be a quadratic $\Gfrak^*$-algebra such that $M$ satisfies the above conditions. Let $R^\perp$ be a subset of $\Free^*_{(2)}(M^{\cz})$ satisfying that $(R^\perp)_{(2)}$ is orthogonal to $(R)$ with respect to the pairing $\langle\_ ,\_\rangle$. We define the \emph{Koszul dual $\Gfrak^*$-algebra} of $\Pcal$ to be $\Pcal^!:=\QO{M^{\cz}}{R^\perp}$.

The Koszul dual $\Gfrak^*$-algebra $\Pcal^!$ of a quadratic $\Gfrak^*$-algebra $\Pcal$ relates to the Koszul dual $\Pcal^{\antishriek}$ in the following way
\[
(\Pcal^{\antishriek})_{(s)}(m,n)\iso\Sigma^{-s}((\Pcal^!)_{(s)}(m,n))^{\cz},
\]
where the isomorphism is of  $\Gfrak^*$-coalgebras. Thus, computing the Koszul dual $\Gfrak^*$-algebra and its composition product gives us an accessible way of determining the differential of the bar construction on $\Pcal^{\antishriek}$.
\section{Poisson geometry}
\label{poissongeometry}

In this section we recall basic facts concerning Poisson structures. We also give an interpretation of extended Poisson structures on formal graded manifolds as a family of brackets comprising an $\ELL_\infty$ algebra on the structure sheaf of the manifold.

\subsection{Classical Poisson geometry}
\label{poissongeompar}

Let $\Mcal$ be a manifold and denote by $\Ocal_\Mcal$ the \emph{structure sheaf} of $\Mcal$, i.e.~the sheaf of commutative $\K$-algebras of smooth functions on $\Mcal$. A \emph{Poisson bracket} on $\Mcal$ is an operation $\{\_,\_\}\colon \Ocal_{\Mcal} \otimes \Ocal_{\Mcal}\to \Ocal_{\Mcal}$ satisfying the following  properties
\begin{enumerate}
 \item
\label{skewsymmetry}
 $\{f,g\}=-\{g,f\}$ \bhs (skew-symmetry)
 \item
\label{jacobiproperty}
  $\{f,\{g,h\}\}+\{g,\{h,f\}\}+\{h,\{f,g\}\}=0$ \bhs (Jacobi identity)
 \item
\label{derproperty}
 $\{f,gh\}=\{f,g\}h+g\{f,h\}$ \bhs (Leibniz property of $\{f,\_\}$).
\end{enumerate}
Thus a Poisson bracket is a Lie bracket on $\Ocal_\Mcal$ which in each argument acts as a derivation with respect to the multiplication of smooth functions on $\Ocal_\Mcal$. We call the pair $(\Mcal,\{\_,\_\})$ a \emph{Poisson geometry} and will refer to the Poisson bracket as a \emph{Poisson structure}.

\subsection{Poisson structures as bivector fields}
\label{poissonstructurebivectorfields}

To the manifold $\Mcal$ there is associated the \emph{tangent sheaf} $\Tcal_\Mcal$ of derivations of $\Ocal_\Mcal$. Elements of $\Tcal_\Mcal$ are called \emph{vector fields} and it comes equipped with a Lie bracket
\[
 [A,B]:=A\circ B-B\circ A \bhs\bhs A,B\in\Tcal_\Mcal.
\]
Consider now the exterior algebra $\wedge^\bullet_{\Ocal_\Mcal}\Tcal$ of \emph{polyvector fields}. We will omit $\Ocal_\Mcal$ from the notation. 
It has a natural grading given by the tensor length, i.e.~$\wedge^i \Tcal_\Mcal$ are precisely the elements of degree $i$. The bracket of $\Tcal_\Mcal$ can be extended to a degree $-1$ Lie bracket on $\wedge^\bullet \Tcal_\Mcal$. The extended bracket
\begin{equation}
\label{oddschouten}
 [\_,\_]_{\tilde{S}}\colon\wedge^k \Tcal_\Mcal \wedge_\K \wedge^l \Tcal_\Mcal \to \wedge^{k+l-1} \Tcal_\Mcal
\end{equation}
is defined by 
\begin{multline*}
[A_1\wedge \dotsb \wedge A_k,B_1\wedge\dotsb\wedge B_l]_{\tilde{S}}:=\\
\sum_{i,j} (-1)^{i+j}[A_i,B_j]\wedge A_1 \wedge\dotsb\wedge \widehat{A_i} \wedge\dotsb\wedge A_k \wedge B_1 \wedge\dotsb\wedge \widehat{B_j} \wedge\dotsb\wedge B_k.
\end{multline*}
for $k\geq 1$, $l=0$, i.e~$B_0\in\Ocal_\Mcal$, by
\begin{align*}
&[A_1\wedge \dotsb \wedge A_k,B_0]_{\tilde{S}}:=
\sum_{i,j} (-1)^{i+k}A_i(B_0)\wedge A_1 \wedge\dotsb\wedge \widehat{A_i} \wedge\dotsb\wedge A_k,
\end{align*}
for $k=0$, $l\geq 1$ by
\begin{align*}
&[B_0,A_1\wedge \dotsb \wedge A_k]_{\tilde{S}}:=
\sum_{i,j} (-1)^{i}A_i(B_0)\wedge A_1 \wedge\dotsb\wedge \widehat{A_i} \wedge\dotsb\wedge A_k,
\end{align*}
and for $k=l=0$ by
\[
[A_0,B_0]_{\tilde{S}}:=0.
\]

Note that the above degree $-1$ Lie bracket on $\wedge^\bullet \Tcal_\Mcal$ is equivalent to the ordinary degree zero \emph{Schouten bracket} $[\_,\_]_{S}$ on $\wedge^\bullet \Tcal_\Mcal[1]$. We prefer to work with the former structure and will refer to it as the \emph{odd Schouten bracket}.

The \emph{cotangent sheaf} of a manifold $\Mcal$ is defined by  $\Omega_\Mcal^1:=\Hom_{\Ocal_\Mcal}(\Tcal_\Mcal, \Ocal_\Mcal)$ and  the \emph{de Rham algebra} by $\Omega_\Mcal^\bullet:=\wedge^\bullet\Omega_\Mcal^1$. Note that there is a natural isomorphism $\Omega_\Mcal^i\iso\Hom_{\Ocal_\Mcal}(\wedge^i \Tcal_\Mcal, \Ocal_\Mcal)$. The elements of $\Omega_\Mcal^i$ are called \emph{$i$-forms}. To an element $f\in\Ocal_\Mcal$ there is an associated 1-form $df$ defined by $df(A):=A(f)$, for a vector field $A$.

From a \emph{bivector field} $\G$, i.e.~an element of $\wedge^2 \Tcal_\Mcal$, one obtains an operation
\[
[\_,\_]_\G\colon\Ocal_{\Mcal} \otimes \Ocal_{\Mcal}\to \Ocal_{\Mcal}
\]
defined by
\[
[f,g]_\G:=\G df\wedge dg. 
\]
This operation satisfies properties \eqref{skewsymmetry} and \eqref{derproperty} of \S \ref{poissongeompar} since $\G$ is a bivector field. Conversely, any bilinear operation $\Ocal_{\Mcal} \wedge \Ocal_{\Mcal}\to \Ocal_{\Mcal}$ satisfying the properties \eqref{skewsymmetry} and \eqref{derproperty} can be described by a bivector field in this way. The condition that $[\_,\_]_\G$ satisfies the Jacobi identity is equivalent to $[\G,\G]_{\tilde{S}}=0$. Thus the following definition is equivalent to the one given in \S \ref{poissongeompar}.

\begin{defn*}
A \emph{Poisson structure} on a manifold $\Mcal$ is a polyvector field $\G\in\wedge^{2}\Tcal_\Mcal$ satisfying $[\G,\G]_{\tilde{S}}=0$.
\end{defn*}

\subsection{Generalized Poisson structures}

In fact one does not need to consider only the solutions of $[\G,\G]_S=0$ which are of degree two. One generalization of Poisson geometry is to $n$-ary Poisson brackets. For $n$ even, a polyvector field $\G\in\wedge^n\Tcal_\Mcal$ defines a \emph{generalized Poisson structure} if $[\G,\G]_S=0$. The associated $n$-ary Poisson bracket is defined analogously to the case $n=2$; for a polyvector field $\G\in\wedge^{n}\Tcal_\Mcal$ it is given by
\[
 \{f_1,\dotsc,f_n\}:=\G d f_1 \wedge \dotsb \wedge d f_n.
\]
The condition $[\G,\G]_{\tilde{S}}=0$ translates into a generalized Jacobi identity.
Notice that for polyvector fields of a non-graded manifold the expression $[\G,\G]_S$ identically vanishes for $n$ odd. It is possible to define a Poisson bracket with properties mimicking the classical case also for $n$ odd, but then the generalized Jacobi identity can not be expressed through the Schouten bracket. See e.g.~ e.g.~\cite{Azc'arraga1996} and \cite{Vinogradov1998} for more on $n$-ary Poisson brackets.

\subsection{Bi-Hamiltonian structures}
\label{compatiblepoisson}

Let $\Mcal$ be a manifold equipped with a pair of Poisson brackets $\{\_\hspace{1pt},\_\}_1$ and $\{\_\hspace{1pt},\_\}_2$. Consider the bracket defined by their sum 
\[
\{\_\hspace{1pt},\_\}:=\{\_\hspace{1pt},\_\}_1+\{\_\hspace{1pt},\_\}_2.
\]
It is obviously skew symmetric and it satisfies the Leibniz property, but it does not always satisfy the Jacobi identity. A pair of Poisson brackets $\{\_\hspace{1pt},\_\}_1$ and $\{\_\hspace{1pt},\_\}_2$ are called \emph{compatible} if their sum satisfies the Jacobi identity and thus itself is a Poisson bracket.

\begin{defn*}
A manifold equipped with two compatible Poisson structures is called \emph{bi-Hamiltonian} or a \emph{Poisson pair}. 
\end{defn*}

Let $\G_1$ and $\G_2$ be bivector fields corresponding to a pair of Poisson brackets, thus they satisfy $[\G_1,\G_1]_{\tilde{S}}=0$ and $[\G_2,\G_2]_{\tilde{S}}=0$, respectively. The compatibility of the Poisson brackets is equivalent to $[\G_1+\G_2,\G_1+\G_2]_{\tilde{S}}=0$ which in turn is equivalent to $[\G_1,\G_2]_{\tilde{S}}=0$. By introducing a formal parameter $\hslash$, the above conditions are together equivalent to
\[
 [\G_1+\G_2\hslash,\G_1+\G_2\hslash]_{\tilde{S}_\hslash}=0.
\]
Here the bracket is the linearization in $\hslash$ of the odd Schouten bracket.

\subsection{Poisson structures on formal graded manifolds}
\label{formalgradedpoisson}

We now turn our attention to graded manifolds. More accurately,
we will consider only formal graded manifolds, i.e.~manifolds consisting of a formal neighborhood of a single point, and the grading we consider is over $\Z$. A graded vector space $V$ can be naturally viewed as a formal graded manifold by considering a formal neighborhood of the origin. We denote the distinguished point by $0$. Let $\{e_a\}$ be a homogeneous basis of $V$, and denote the associated dual basis by  $\{t^a\}$, with grading $|t^a|=-|e_a|$. The structure sheaf of $V$ is given by $\Ocal_V:=\widehat{\odot^\bullet V^*}\iso\K\llbracket t^a\rrbracket$.

A \emph{graded Poisson bracket} on a formal graded manifold $V$ is a degree zero bilinear operation $\{\_,\_\}\colon \Ocal_{V} \wedge \Ocal_{V}\to \Ocal_{V}$ satisfying the properties
\begin{enumerate}
 \item
\label{gradedskewsymmetry}
  $\{f,g\}+(-1)^{fg}\{g,f\}$  \bhs\bhs(graded skew-symmetry)\vspace{2pt}
 \item
\label{gradedjacobiproperty}
  $(-1)^{fh}\{f,\{g,h\}\}+(-1)^{gf}\{g,\{h,f\}\}+(-1)^{hg}\{h,\{f,g\}\}$ \vspace{2pt} \\(graded Jacobi identity) \vspace{2pt}
 \item
\label{gradedderproperty}
 $\{f,gh\}=\{f,g\}h+(-1)^{fg}g\{f,h\}$  \bhs\bhs (Leibniz property of $\{f,\_\}$).
\end{enumerate}
The notation $(-1)^f$ is short for $(-1)^{|f|}$ and will be used from now on. We see that a graded Poisson structure is a graded Lie algebra on $\Ocal_V$ with the extra property that the Lie bracket is a graded derivation in each argument with respect to the graded commutative multiplication on $\Ocal_V$. 

\subsection{Graded Poisson structures as bivector fields}
\label{gradedpoissonstructurebivectorfields}

The tangent sheaf $\Tcal_V$ of vector fields of $V$ is the $\Ocal_V$-module of derivations of $\Ocal_V$. The tangent sheaf is generated over $\Ocal_V$ by the derivations $\{\ddt{a}\}$ with $\frac{\p t^b}{\p t^a}=\delta_{a,b}$. We note that $|\ddt{a}|=-|t^a|$.
The sheaf of polyvector fields is defined as
\[
\wedge^\bullet\Tcal_V:=\odot^\bullet_{\Ocal_V}(\Tcal_V[-1]).
\]
We denote the generators $s\ddt{a}$ by $\psi_a$, where $s$ is a formal symbol of degree one. Thus $|\psi_a|=-|t^a|+1$. With this notation we have $\wedge^\bullet\Tcal_V\iso\K\llbracket t,\psi\rrbracket$. The degree of a homogeneous polyvector field 
\[
 \G=\G^{a_1\dots a_{i}}_{b_1\dots b_{j}} t^{b_1}\dots t^{b_{j}} \psi_{a_1}\dotsb\psi_{a_i}
\]
is given by 
\[
|\G|=|t^{b_1}|+\dotsb+|t^{b_{j}}|+ |\psi_{a_1}|\dotsb|\psi_{a_i}|.
\]
Note that $\pvfs$ also has the grading described in  \S \ref{poissonstructurebivectorfields};  we will refer to this grading as the \emph{weight} and to the former as the \emph{cohomological degree} or simply as the degree. When $V$ is concentrated in degree zero these gradings coincide.

We define the odd Schouten bracket by
\[
 [A,B]_{\tilde{S}}:=A\bullet B +(-1)^{|A||B|+|A|+|B|}B\bullet A
\]
where we use the notation
\[
A\bullet B := \frac{\p A}{\p \psi_a}\frac{\p B}{\p t^a}.
\]
Note that the with the above grading the Schouten bracket is a degree $-1$ (cohomological as well as weight) Lie bracket and if $V$ is concentrated in degree zero, then this definition coincides with \eqref{oddschouten}. An interpretation of graded Poisson structures in terms of bivector fields vanishing on the Schouten bracket, analogous to that of \S \ref{poissonstructurebivectorfields} can be found in \cite{Cantrijn1991}.

\begin{defn*}
A \emph{graded Poisson structure} on a formal graded manifold $V$ is an element $\G\in\bvfs$ of degree two satisfying $[\G,\G]_{\tilde{S}}=0$.
\end{defn*}

\begin{rem*}
That we require the bivector field to be of degree two ensures that the associated Poisson bracket is of degree zero.
 \end{rem*}


\subsection{Extended Poisson structures}
\label{formalextendedpoissonstructures}

When translated to differential geometry the prop profile of Poisson geometry, to be discussed in detail in \S \ref{poissonpropprofile}, can be interpreted \cite{Merkulov2006} as polyvector fields $\G$ with the properties 
\begin{enumerate}
\item\label{poissonspreadout} $\G\in\wedge^{\bullet\geq 1}\Tcal_V$,
\item\label{poissondegreetwo} $|\G|=2$,
\item\label{poissonschouten} $[\G,\G]_{\tilde{S}}=0$,
\item\label{pointedproperty} $\G|_{0}=0$.
\end{enumerate}

If we want such polyvector fields to generalize Poisson structures then Property \eqref{pointedproperty} is not desirable. We propose the following definition.

\begin{defn*}
Am \emph{extended Poisson structure} on a formal graded manifold $V$ is an element $\G\in\wedge^{\bullet\geq 1}\Tcal_V$
of degree two satisfying $[\G,\G]_{\tilde{S}}=0$.
\end{defn*}

We call an extended Poisson structure \emph{pointed} if it satisfies Property \eqref{pointedproperty}. By Remark \ref{nonpointedrem} the prop profile essentially describes all extended Poisson structures.

Note that if $V$ is concentrated in degree zero, then an extended Poisson structure is an ordinary Poisson structure on $V$, i.e.~in this case $\G\in\wedge^2\Tcal_V$. 

\subsection{The family of brackets of an extended Poisson structure}
\label{familyofbrackets}

The cotangent sheaf of a formal graded manifold $V$ is defined by $\Omega_V^1:=\Hom_{\Ocal_V}(\Tcal_V,\Ocal_V)$ and the de Rham algebra by
\[
\Omega^\bullet_V:=\odot^\bullet_{\Ocal_V}(\Omega^1_V[1]).
\]
A basis over $\Ocal_V$ of $\Omega_V^1$ is given by $\{dt^{a}\}$, where $\psi^b dt^a:= dt^a(\psi_b)=\delta_{a,b}$ and $|dt^a|=|t^a|-1$.

To a polyvector field $\G=\sum_{n\geq 1} \G_n$, with $\G_n:=\G^{a_1\dots a_{n}}(t)\psi_{a_1}\dotsb\psi_{a_n}$, we associate a family of brackets as follows. We define an $n$-ary bracket $L_n\colon\otimes^n\Ocal_V\to\Ocal_V$  by
\begin{align*}
 L_n(f_1,\dotsc,f_n):&=\G_n d f_1 \wedge \dotsb \wedge d f_n\\
&=(-1)^{\epsilon}\G^{a_1\dots a_{n}}(t)(\p_{a_1}f_1)\dotsb(\p_{a_n}f_n).
\end{align*}
Here the sign $(-1)^\epsilon$ is given by
\[
 \epsilon= a_n(f_1+\dotsb+f_{n-1}+n-1)+(a_{n-1})(f_1+\dotsb+f_{n-2}+n-2)+\dotsb+a_2(f_1+1).
\]

\begin{defn*}
 A vector space $V$ together with a family $\{l_n\}_{n\in\N}$ of graded skew symmetric maps $l_n\colon \otimes^n V\to V$ of degree $2-n$ is called an \emph{$\ELL_\infty$ algebra} if the following condition is satisfied for all $n\in\N$
\begin{equation}
\label{Linfty}
 \hspace{-10pt}\sum_{\substack{r+s=n+1\\ \text{$(s,n-s)$-unshuffles $\sigma$}}}  \epsilon(\sigma)\sgn(\sigma)(-1)^{r(s-1)} l_r(l_s(v_{\sigma(1)},\dotsc v_{\sigma(s)}),v_{\sigma(s+1)},\dotsc, v_{\sigma(n)}).
\end{equation}
Here the sign $\epsilon(\sigma)$ is the sign appearing from the Koszul-Quillen sign rule.
\end{defn*}

\begin{thm}
\label{poissonLinftythm}
The brackets $L_n$ associated to a polyvector field $\G\in\wedge^{\bullet\geq 1}\Tcal_V$ as above are graded skew commutative and have the graded Leibniz property in each argument, i.e.~for all $n\geq 1$ and all $2\leq j \leq n$
\begin{multline*}
 L_n(f_1,\dotsc, f_{j-1},gh ,f_j,\dotsc f_n)=\\(-1)g L_n(f_1,\dotsc, f_{j-1},h ,f_j,\dotsc f_n)+(-1)L_n(f_1,\dotsc, f_{j-1},g ,f_j,\dotsc f_n)h.
\end{multline*}
Moreover, the family of brackets $\{ L_n \}_{n\geq 1}$ gives $\Ocal_V$ the structure of $\ELL_\infty$ algebra if and only if $\G$ is 
an extended Poisson structure. 
\end{thm}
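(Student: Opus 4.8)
The first assertion is formal, and I would dispatch it quickly. Since each $\G_n$ is a genuine element of $\wedge^n\Tcal_V$, the quantity $L_n(f_1,\dots,f_n)=\G_n\,df_1\wedge\dots\wedge df_n$ is the pairing of an $n$-vector field with a wedge of one-forms, so the graded (anti)commutativity of $\wedge$ in $\Omega^\bullet_V$ — remembering the shift $|df_i|=|f_i|-1$ — gives precisely the graded skew-commutativity of $L_n$ in the sense required above. For the Leibniz rule in the $j$-th slot one uses $d(gh)=(dg)\,h+(-1)^{|g|}g\,(dh)$ together with the fact that the operators $\psi_a=s\,\partial/\partial t^a$ occurring in $\G_n$ differentiate: inserting $d(gh)$ into the $j$-th argument and commuting the scalar factors $g$, resp.\ $h$, past the remaining $df_\bullet$ and past $\G_n$ produces exactly the signs displayed in the statement. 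Both points come out of a direct computation with the coordinate formula for $L_n$ already given.

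For the equivalence the plan is to translate everything through a dictionary between polyvector fields and their brackets and then to match two coordinate computations. To any $\Xi=\sum_{n\ge1}\Xi_n\in\wedge^{\bullet\ge1}\Tcal_V$ I associate the family $M^\Xi_n(f_1,\dots,f_n):=\Xi_n\,df_1\wedge\dots\wedge df_n$; evaluating on the coordinate functions $t^{c_1},\dots,t^{c_n}$ recovers the coefficient $\Xi^{c_1\dots c_n}(t)$ up to sign, so $\Xi\mapsto\{M^\Xi_n\}_{n\ge1}$ is injective, and $\Xi$ is homogeneous of cohomological degree $2$ if and only if every $M^\Xi_n$ has degree $2-n$ (the degree shift coming from $|\psi_a|=1-|t^a|$ and $|dt^a|=|t^a|-1$). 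Consequently $[\G,\G]_{\tilde{S}}=0$ is equivalent to $M^{[\G,\G]_{\tilde{S}}}_n=0$ for all $n$, and, by the $\ELL_\infty$ axiom \eqref{Linfty}, it suffices to prove that for every $n$ and some nonzero combinatorial constant $c_n$ independent of the $f_i$
\begin{multline*}
M^{[\G,\G]_{\tilde{S}}}_n(f_1,\dots,f_n)=\\
c_n\!\!\sum_{\substack{r+s=n+1\\ \text{$(s,n-s)$-unshuffles $\sigma$}}}\!\!\epsilon(\sigma)\sgn(\sigma)(-1)^{r(s-1)}\,L_r\bigl(L_s(f_{\sigma(1)},\dots,f_{\sigma(s)}),f_{\sigma(s+1)},\dots,f_{\sigma(n)}\bigr).
\end{multline*}

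To establish this identity I would expand the right-hand side in coordinates. Writing $L_s(f_{\sigma(1)},\dots,f_{\sigma(s)})=\pm\,\G^{b_1\dots b_s}(t)(\partial_{b_1}f_{\sigma(1)})\cdots(\partial_{b_s}f_{\sigma(s)})$ and applying $L_r$, the derivative $\partial_{a_1}$ that $\G_r$ feeds into the inner bracket splits, by the Leibniz rule, into one term where it hits the coefficient $\G^{b_1\dots b_s}(t)$ and $s$ terms where it produces a Hessian $\partial_{a_1}\partial_{b_k}f_{\sigma(k)}$. The Hessian terms are the crux: summed over all decompositions $r+s=n+1$ and all unshuffles they cancel in pairs — interchanging the two functions carrying the second derivative sends one such term to the negative of another — exactly as in the classical computation showing that a bivector $\G$ with $[\G,\G]_{\tilde{S}}=0$ yields a Jacobi bracket; tracking the Koszul, unshuffle and $(-1)^{r(s-1)}$ signs through this cancellation is the main obstacle. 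Once those terms are gone, the surviving coefficient-derivative contributions reorganize: the double sum over $(r,s)$ and unshuffles collapses to a single sum over ordered splittings of $\{f_1,\dots,f_n\}$ into two blocks, which upon comparison with $[\G,\G]_{\tilde{S}}=2\,\G\bullet\G$, $\G\bullet\G=\frac{\partial\G}{\partial\psi_a}\frac{\partial\G}{\partial t^a}$, is seen to equal $M^{[\G,\G]_{\tilde{S}}}_n(f_1,\dots,f_n)$ up to the universal constant $c_n$. Combined with the injectivity and the degree count of the previous paragraph, this proves the theorem. (Conceptually, the argument says that $\Xi\mapsto\{M^\Xi_n\}$ is an injective morphism of graded Lie algebras from $(\wedge^{\bullet\ge1}\Tcal_V,[\,,\,]_{\tilde{S}})$ into the Nijenhuis--Richardson Lie algebra whose Maurer--Cartan elements of the appropriate degree are $\ELL_\infty$ structures on $\Ocal_V$, the Hessian cancellation being precisely the verification that brackets are preserved.)
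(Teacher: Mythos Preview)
Your proposal is correct and follows essentially the same route as the paper: the paper dispatches skew-symmetry and Leibniz just as briefly, makes the same degree count $|L_n|=2-n\iff|\G|=2$, and then declares the equivalence a ``tedious but straightforward computation'' analogous to the classical bivector case. You have actually sketched that computation---the dictionary $\Xi\mapsto\{M^\Xi_n\}$, the Hessian cancellation across the $(r,s)\leftrightarrow(s,r)$ unshuffle pairs, and the identification of the surviving terms with $\G\bullet\G$---so your write-up is more informative than the paper's own proof, but the strategy is identical.
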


\begin{proof}
That the brackets $L_n$ are graded skew symmetric is immediate from the definition. The Leibniz property is satisfied since $L_n(f_1,\dotsc, f_{j-1},\hspace{2pt}\_\hspace{1pt} ,f_j,\dotsc f_n)$ is a vector field.
We notice that
\begin{align*}
 |L_n|=|\G^{a_1\dots a_{n}}(t)|+(|\p_{a_1}|+\dotsb+|\p_{a_n}|)=|\G_n|-n=2-n.
\end{align*}
Thus $\G$ is of degree two if and only if $L_n$ is of degree $2-n$.
For the Poisson bracket associated to a bivector field $P$ the condition $[P,P]_{\tilde{S}}=0$ is equivalent to the Poisson bracket satisfying the Jacobi identity. That the $L_i$ satisfy the $\ELL_\infty$-conditions is proven much in the same way. It is a tedious but straightforward computation to verify that the brackets $L_n$ associated to a polyvector field $\G$ of degree two satisfies the equation \eqref{Linfty} if and only if $[\G,\G]_{\tilde{S}}=0$.
\end{proof}

This leads to another definition of extended Poisson structures on formal graded manifolds, which by the preceding theorem is equivalent to the one we gave in \S \ref{formalextendedpoissonstructures}.

\begin{defn*}
An \emph{extended Poisson structure} on a formal graded manifold $V$ is an $\ELL_\infty$ algebra $\{ L_n \}_{n\geq 1}$ on $\Ocal_V$ such that the brackets $L_n$ have the Leibniz property in each argument.
\end{defn*}

\subsection{Graded bi-Hamiltonian structures}
\label{gradedbihamstructure}

A graded Bi-Hamiltonian structure on a formal manifold $V$ is defined analogously to the non-graded case; it is a pair $\G_1$ and $\G_2$ of graded Poisson structures such that their sum $\G_1+\G_2$ again is a graded Poisson structure. In particular this implies that the associated Poisson brackets are a pair of compatible graded Lie brackets. In Section \ref{geominterpretation} we propose a definition of extended bi-Hamiltonian structures, obtained from the machinery of prop profiles, such that the associated family of brackets is the strongly homotopy structure associated to a pair of compatible Lie brackets.
\section{Prop profiles I: Extracting the prop}
\label{propprofileextraction}

Finding a prop profile of a geometric structure is done in two main steps. First one extracts the fundamental part of the geometric structure and encodes it as a prop. Then one computes a minimal resolution of the extracted prop. The aim of this section is to extract the prop of bi-Hamiltonian structures. We begin by recalling the prop profile of Poisson structures originally constructed in \cite{Merkulov2006}.

\subsection{The prop profile of Poisson structures}
\label{poissonpropprofile}

Consider the formal graded manifold associated to a vector space $V$. Recall that a Poisson structure on $V$ is a degree two bivector field $P\in\wedge^\bullet\Tcal_V$ satisfying $[P,P]_{\tilde{S}}=0$. To be precise we consider a pointed Poisson structure. With the notation of the previous section we have
\[
 P=\sum_{n\geq 1}P^{a_1 a_2}_{b_1\dotsb b_n}t^{b_1}\dotsb t^{b_n}\psi_{a_1}\psi_{a_2}.
\]
We can interpret this as a collection of degree zero maps
\[
p_n\colon \odot^n V \to \wedge^2 V
\]
defined by
\[
 p_n(e_{b_1}\odot\dotsb\odot e_{b_n})\to P^{a_1 a_2}_{b_1\dotsb b_n} e_{a_1}\wedge e_{a_2}.
\]
The condition $[P,P]_{\tilde{S}}=0$ then translates into a sequence of quadratic relations of these maps. Merkulov's idea \cite{Merkulov2006} was that this algebraic structure corresponds to just the degree zero part of the resolution of a prop. This means that a certain part of the structure is fundamental and the rest of the maps are higher homotopies, many of which may not be visible in degree zero.

Kontsevich in \cite{Kontsevich2003} gave an interpretation of degree two (degree one if we consider $\wedge^\bullet\Tcal_V[1]$) vector fields $Q$ satisfying $[Q,Q]_{\tilde{S}}=0$ as $\ELL_\infty$ algebras. A vector field $Q$ given by
\[
 Q=\sum_{n\in\N}Q^a_{b_1\dotsb b_n}t^{b_1}\dotsb t^{b_n}\psi_{a}
\]
gives rise to a family of degree one maps
\[
q_n\colon \odot^n V \to V
\]
defined by
\[
 q_n(e_{b_1}\odot\dotsb\odot e_{b_n})\to Q^a_{b_1\dotsb b_n} e_a.
\]
The vector field satisfies $[Q,Q]_{\tilde{S}}=0$ if and only if the maps $q_n$ give $V[-1]$ the structure of $\ELL_\infty$ algebra. Such vector fields are called homological. In fact a homological vector field corresponds to a part of the prop profile of Poisson structures.  The reason why it does not show in a classical Poisson structure is that it lies in the wrong degree; degree one maps vanish on a vector space concentrated in degree zero. The fundamental part of the $\ELL_\infty$ structure obtained from $Q$ is the map $q_2$, the rest of the $q_n$ are higher homotopies. We denote the corresponding part of the vector field by $\hat{Q}$. The properties of a Poisson structure is modeled by the vanishing of the Schouten bracket, in particular the condition $[Q,Q]_{\tilde{S}}=0$ implies
\begin{equation}
\label{Qcondition}
[\Qf,\Qf]_{\tilde{S}}=0
\end{equation}
which is equivalent to $q_2$ being a degree one Lie bracket on $V$. The maps $\{q_n\}$ satisfying conditions dictated by $[Q,Q]_{\tilde{S}}=0$ give $V$ the structure obtained from the minimal resolution $(\Lieone)_\infty$ of the operad $\Lieone$ of Lie algebras with the bracket of degree one. Identifying $q_2$ with $\Ysmall$ the operadic interpretation of \eqref{Qcondition} is
\begin{equation}
\label{Qrelation}
 \YY{1}{2}{3}+\YY{2}{3}{1}+\YY{3}{1}{2}=0.
\end{equation}

Another fundamental part of the data of a Poisson structure is $p_1$. We denote the part of $P$ corresponding to $p_1$ by $\Pf$. In contrast to $\Qf$ it lies in a degree where one can spot it in the case of classical Poisson structures, but for degree reasons no part corresponding to the higher homotopies of $p_1$ is visible. The condition
\begin{equation}
\label{Pcondition}
[\Pf,\Pf]_{\tilde{S}}=0
\end{equation}
is equivalent to the map $p_1$ defining a Lie coalgebra structure on $V$. Identifying $p_1$ with $\coYsmall$, the propic (though still operadic in its nature) depiction of \eqref{Pcondition} is
\begin{equation}
\label{Prelation}
 \coYcoY{1}{2}{3}+\coYcoY{2}{3}{1}+\coYcoY{3}{1}{2}=0.
\end{equation}

To obtain the maps $p_n$ with $n\geq 2$ we need to combine the fundamental parts $\Pf$ and $\Qf$. Their relation is also modeled by the Schouten bracket
\begin{equation}
\label{PQcondition}
[\Pf,\Qf]_{\tilde{S}}=0,
\end{equation}
which translates to
\begin{equation}
\label{PQrelation}
 \YcoY{1}{2}{1}{2}-\coYopY{1}{2}{1}{2}+\coYopY{2}{1}{1}{2}-\coYopY{1}{2}{2}{1}+\coYopY{2}{1}{2}{1}=0.
\end{equation}
We can simultaneously express the conditions \eqref{Qcondition}, \eqref{Pcondition}, and \eqref{PQcondition} by
\[
 [\Pf+\Qf,\Pf+\Qf]_{\tilde{S}}=0.
\]
To describe Poisson geometry as a minimal resolution of an algebraic object we need to go beyond operads; since $p_1$ has multiple outputs and $q_2$ multiple inputs we need a prop to model them. 

\begin{defn*}
The prop $\Poisson$ is the quadratic prop $\QO{M}{R}$ where $M$ is the $\sbimodule$ given by $M(1,2)=\K\Ysmall=\bbone_1\otimes\bbone_2[-1]$, $M(2,1)=\K\coYsmall=\sgn_2\otimes\bbone_1$, and zero for other $(m,n)$. The relations $R$ are given by \eqref{Qrelation}, \eqref{Prelation}, and \eqref{PQrelation}. 
\end{defn*}

\begin{rems*}\hspace{1pt}\vspace{-15pt}\newline
\begin{enumerate}
 \item This prop is similar to the prop $\Liebi$ of Example \ref{liebi} with the difference being that the bracket and cobracket lie in degrees differing by one, explaining the $1$ in the notation.
 \item Actually, since the relations are dioperadic and constitute what is called a distributive law, cf.~\ref{distributivelaw}, it suffices to encode the fundamental part of the geometric structure as a dioperad. Its resolution is then easier to compute and is straightforwardly extended to a resolution of the corresponding prop.
\end{enumerate}
\end{rems*}

Merkulov called the generators and relations of $\Poisson$ the genes and engineering rules of Poisson geometry, together constituting its genome. By computing its minimal resolution $\Poisson_\infty$ explicitly and translating representations of it into polyvector fields he obtained the following result.

\begin{thm}[Proposition 1.5.1 of \cite{Merkulov2006}]
\label{merkulovmainthm}
There is a one-to-one correspondence between representations of $\Poisson_\infty$ in a dg vector space $V$ and pointed extended Poisson structures on the formal manifold associated to $V$. 
\end{thm}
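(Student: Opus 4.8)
The plan is to transport the known structure theorem for $\Poisson_\infty$ (Theorem \ref{merkulovmainthm}) through the standard machinery: a representation $\rho\colon\Poisson_\infty\to\End_V$ is the same as a Maurer--Cartan element in the convolution dg Lie algebra $\Hom_{\s}(\Poisson^{\antishriek},\End_V)$, which in turn is exactly the datum of a degree-one coderivation squaring to zero on the cofree coalgebra, i.e.\ an $\ELL_\infty$-type structure. So first I would recall the explicit form of $\Poisson_\infty=\CBC((\Poisson^{\antishriek}))$ obtained in \cite{Merkulov2006}: the generators in arity $(m,n)$ are the ``$(m,n)$-corollas'' and the Koszul dual is read off via $(\Poisson^{\antishriek})_{(s)}(m,n)\iso\Sigma^{-s}((\Poisson^!)_{(s)}(m,n))^{\cz}$, using the Koszul-duality shortcut of \S\ref{koszuldualGalgebra}. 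Since the relations \eqref{Qrelation}, \eqref{Prelation}, \eqref{PQrelation} are dioperadic and form a distributive law, $\Poisson^!$ is the ``product'' of the Koszul duals of $\Lieone$ and $\Colie^1$, so its components are completely explicit and finite-dimensional in each arity.

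Next I would spell out what a dg-algebra map $\rho\colon(\Free^*(\Sigma\bar{(\Poisson^{\antishriek})}),\delta)\to\End^\downarrow_V$ amounts to. By freeness $\rho$ is determined by its values on the generators, i.e.\ by a collection of multilinear maps; the compatibility with $\delta$ becomes a sequence of quadratic equations. The key move, exactly as in \cite{Merkulov2006}, is to organize this collection of maps into a single polyvector field $\G\in\wedge^{\bullet\geq 1}\Tcal_V$: a generator with $i$ outputs and $n$ inputs gets identified with a summand $\G^{a_1\dots a_i}_{b_1\dots b_n}t^{b_1}\dots t^{b_n}\psi_{a_1}\dots\psi_{a_i}$ of $\G$, as was already done for $p_n$ and $q_n$ in \S\ref{poissonpropprofile}. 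Under this dictionary: the grading convention on $\Poisson_\infty$ forces $|\G|=2$ (Property \eqref{poissondegreetwo}); the fact that generators start in weight one (no $(1,1)$ part, $\bar{(\Poisson^{\antishriek})}$ being concentrated in positive weight) forces $\G\in\wedge^{\bullet\geq 1}\Tcal_V$ (Property \eqref{poissonspreadout}); and the decoration uses only $\s$-bimodule generators with at least one input, which is what makes $\G$ vanish at the origin, Property \eqref{pointedproperty}. The heart is to check that the single equation $\delta^2=0$ translated through $\rho$ is precisely $[\G,\G]_{\tilde{S}}=0$: the quadratic part of the cobar differential $d_\theta$ is dual to the cocomposition coproduct of $\Poisson^{\antishriek}$, which by Koszulness is dual to the composition in $\Poisson^!$, and that composition is built from exactly the three generating relations — whose geometric incarnations are $[\Qf,\Qf]_{\tilde S}=0$, $[\Pf,\Pf]_{\tilde S}=0$, $[\Pf,\Qf]_{\tilde S}=0$, i.e.\ the three homogeneous (in the bidegree by number of $\psi$'s) components of $[\Pf+\Qf,\Pf+\Qf]_{\tilde S}=0$; and the higher homotopies assemble the remaining components of $[\G,\G]_{\tilde S}$. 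I would present this as a degree-by-degree (in weight/number of vertices) matching rather than a single brute-force identity.

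I expect the main obstacle to be bookkeeping the signs and the symmetrization/desuspension conventions so that the cobar differential on $\Poisson^{\antishriek}$ matches the Koszul--Quillen signs appearing in $[\G,\G]_{\tilde{S}}$ — in particular reconciling the $\Sigma^{-s}$ twists in $(\Poisson^{\antishriek})_{(s)}\iso\Sigma^{-s}((\Poisson^!)_{(s)})^{\cz}$ with the formal symbol $s$ of degree one attached to the $\psi_a$'s, and with the signs $\epsilon$ in the bracket formula of \S\ref{familyofbrackets}. This is where I would lean on the computation already carried out in \cite{Merkulov2006} for representations in a dg vector space: since the present statement \emph{is} Merkulov's Proposition 1.5.1, the actual content is to recall it and fix notation, and the only genuine work is to verify that nothing in the dg setting (as opposed to the purely even setting) breaks the correspondence — which it does not, precisely because the resolution $\Poisson_\infty$ was constructed over dg $\s$-bimodules from the start, and the dictionary ``generators $\leftrightarrow$ summands of $\G$'' is degree-blind. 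Thus the proof is essentially a citation of Theorem \ref{merkulovmainthm} together with the unpacking of definitions above; I would state it as such and refer to \cite{Merkulov2006} for the sign verifications.
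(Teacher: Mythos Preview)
The paper does not give its own proof of this statement: it is stated as Merkulov's Proposition 1.5.1 and simply cited, with the remark that the details will be spelled out in the bi-Hamiltonian case. So the ``paper's proof'' is a bare reference to \cite{Merkulov2006}, which you yourself arrive at in your last paragraph.

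That said, the detailed plan you sketch is exactly the argument the paper carries out for the analogous bi-Hamiltonian statement (Proposition \ref{bipoissonpolyvectorprop}): identify a representation with its values on generators, package those into a polyvector field $\G$, and match the commutativity of the square $\rho\circ\delta=d\circ\rho$ on weight-one corollas with the vanishing of $[\G,\G]_{\tilde S}$ component by component. One small correction: the absence of a $(1,1)$-generator is what forces you to treat the differential $d$ of $V$ separately (as the $(1,1)$-piece $\leftsub{0}{\mu}^1_1$), not what gives $\G\in\wedge^{\bullet\geq 1}\Tcal_V$; the latter comes from $m\geq 1$ for all generators. Your reading of property \eqref{poissonspreadout} is slightly off there, but it does not affect the overall correctness of the outline.
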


To be precise, the above theorem holds if we consider the differential of the vector space $V$ to be part of the data of a representation. This will be explained in detail in the case of bi-Hamiltonian structures.

\begin{rem}
\label{nonpointedrem}
That the Poisson structures considered in Theorem \ref{merkulovmainthm} are pointed, i.e.~vanish at the distinguished point, poses no real problem. Given an arbitrary non-pointed Poisson structure on a formal graded manifold $V$, i.e.~an element $\G\in\wedge^\bullet\Tcal_V$ such that $[\G,\G]_{\tilde{S}}$ and $\G|_0\neq 0$, it can be obtained from $\Poisson_\infty$ by considering representations in $V\oplus\K$. For a formal variable $x$, viewed as a coordinate on $\K$, we have that $x\G\in\wedge^\bullet\Tcal_{V \oplus \K}$ vanishes at the distinguished point of $V\oplus \K$ and since $x\G$ still satisfies $[x\G,x\G]_{\tilde{S}}=0$, it corresponds to a representation of $\Poisson_\infty$.
\end{rem}

\subsection{Extracting the prop of bi-Hamiltonian structures}

A bi-Hamiltonian structure on the formal manifold associated to a vector space $V$ is a pair of bivector fields $P_1$ and $P_2$ satisfying $[P_1,P_2]_{\tilde{S}}=0$, $[P_2,P_2]_{\tilde{S}}=0$, and $[P_1,P_2]_{\tilde{S}}=0$. We want again to extract a prop encoding the fundamental part of this structure. As in the previous paragraph we let $\Pfa$ and $\Pfb$ denote the parts of $P_1$ and $P_2$ corresponding to maps $V \to \wedge^2 V$. The conditions
\begin{equation}
\label{Pabcondition}
[\Pfa,\Pfa]_{\tilde{S}}=0 \bhs \text{and} \bhs[\Pfb,\Pfb]_{\tilde{S}}=0
\end{equation}
are equivalent to that the maps corresponding to $\Pfa$ and $\Pfb$ each give $V$ the structure of Lie coalgebra.

\begin{defn*}
Let $V$ be a vector space and let $\Delta_1$ and $\Delta_2$ be Lie cobrackets on $V$. We say that the cobrackets are \emph{compatible} if their sum $\Delta_1+\Delta_2$ again is a Lie cobracket. We denote the quadratic prop encoding this structure by $\Colietwo$.
\end{defn*}

We depict the maps corresponding to $\Pfa$ and $\Pfb$ with $\coWsmall$ and $\coBsmall$, respectively. The compatibility condition $[\Pfa,\Pfb]_{\tilde{S}}=0$
can then be illustrated by
\[
 \coWcoB{1}{2}{3}+\coWcoB{2}{3}{1}+\coWcoB{3}{1}{2}+\coBcoW{1}{2}{3}+\coBcoW{2}{3}{1}+\coBcoW{3}{1}{2}=0,
\]
which means that the pair $(\Pfa,\Pfb)$ gives $V$ the structure of compatible Lie coalgebras.

We have a similar definition of compatible Lie algebras. 

\begin{defn*}
Let $V$ be a vector space and let $[\_,\_]_1$ and $[\_,\_]_2$ be Lie brackets on $V$. We say that the brackets are \emph{compatible} if their sum $[\_,\_]_1+[\_,\_]_2$ again is a Lie bracket. We denote the quadratic operad encoding this structure by $\Lietwo$.
\end{defn*}

The operad $\Lietwo$ was defined in \cite{Dotsenko2007}. Note that $\Colietwo$ and $\Lietwo$ differ only in the orientation of the defining graphs.

From the experience of constructing the prop profile of Poisson structures we expect a homological vector field $Q$ compatible with both $P_1$ and $P_2$ to be present, i.e.~satisfying $[P_1,Q]_{\tilde{S}}=0$ and $[P_2,Q]_{\tilde{S}}=0$. The compatibility of the fundamental part $\Qf$ with $\Pfa$ and $\Pfb$ means that the maps corresponding to the pairs $(\Pfa,\Qf)$ and $(\Pfb,\Qf)$ both give $V$ the structure of $\Poisson$ algebra.

To express these conditions with a single equation we introduce a formal parameter $\hslash$. The conditions
\[
[\Qf,\Qf]_{\tilde{S}}=0,\shs [\Pfa,\Qf]_{\tilde{S}}=0,\shs [\Pfa,\Pfa]_{\tilde{S}}=0,\shs [\Pfa,\Pfa]_{\tilde{S}}=0,\shs \shs\text{and}\shs [\Pfa,\Pfb]_{\tilde{S}}=0
\]
are then all subsumed by
\begin{equation}
\label{bipoissondioperadcondition}
 [\Qf+\Pfa+\Pfb\hslash,\Qf+\Pfa+\Pfb\hslash]_{S_\hslash}=0.
\end{equation}
Here the bracket is the linearization in $\hslash$ of the Schouten bracket. As in the case of Poisson structures, the relations \eqref{bipoissondioperadcondition} are dioperadic and in order to make easier the computation of the resolution of the corresponding prop we extract the dioperad encoding these relations.

\begin{defn*}
We define the quadratic dioperad $\Bipoisson$ by
\[
\Bipoisson=\QO{M}{R}.
\]
Here $M=\{M(m,n)\}_{m,n\geq 1}$ is the $\sbimodule$
\[
M(m,n)=
\begin{cases}
\bbone_1\otimes(\bbone_2[-1]) & \text{ if $(m,n)=(1,2)$}\\
(\sgn_2\oplus\sgn_2) \otimes \bbone_1 & \text{ if $(m,n)=(2,1)$} \\
0 & \text{otherwise}
\end{cases}.
\]
Denote a (1,2)-graph decorated with the natural basis element of $M(1,2)$ by $\Ysmall$ and (1,2)-graphs decorated with the basis elements of $M(2,1)$ by $\coWsmall$ and $\coBsmall$. The relations $R=R(1,3)\sqcup R(3,1) \sqcup R(2,2)$ consists of the following subsets $R(i,j)\subset\Free_{(2)}(M)(i,j)$
\begin{align}
R(1,3): &\YY{1}{2}{3}+\YY{2}{3}{1}+\YY{3}{1}{2}    \label{jacobi} \\
R(3,1): &\coWcoW{1}{2}{3}+\coWcoW{2}{3}{1}+\coWcoW{3}{1}{2}, \shs \coBcoB{1}{2}{3}+\coBcoB{2}{3}{1}+\coBcoB{3}{1}{2}, \label{cojacobi}\\
        &\coWcoB{1}{2}{3}+\coWcoB{2}{3}{1}+\coWcoB{3}{1}{2}+\coBcoW{1}{2}{3}+\coBcoW{2}{3}{1}+\coBcoW{3}{1}{2} \label{coliecompatibility}\\ 
R(2,2): &\YcoW{1}{2}{1}{2}-\coWopY{1}{2}{1}{2}+\coWopY{2}{1}{1}{2}-\coWopY{1}{2}{2}{1}+\coWopY{2}{1}{2}{1}, \label{lieonebicompatibilitya}\\
        &\YcoB{1}{2}{1}{2}-\coBopY{1}{2}{1}{2}+\coBopY{2}{1}{1}{2}-\coBopY{1}{2}{2}{1}+\coBopY{2}{1}{2}{1}.
\label{lieonebicompatibilityb}
\end{align}
\end{defn*}

By this we have obtained the genes and engineering rules of bi-Hamiltonian structures, the genetic code. We are now ready to plug them into the machinery of Koszul resolutions.
\section{Prop profiles II: Computing the resolution}
\label{propprofileresolution}

In this section we will compute the minimal resolution of the prop associated to the dioperad $\Bipoisson$ constructed in the previous section. This is done by first computing the dioperadic resolution and then extending it to a propic resolution. Recall from \S \ref{koszulresolution} that one way of obtaining a resolution of a Koszul dioperad $\Pcal$ is by computing the Koszul dual codioperad $\Pcal^{\antishriek}$ and then apply the cobar construction, i.e.~$\Omega(\Pcal^{\antishriek})\isoto \Pcal$. The differential of this resolution is determined by the cocomposition product of $\Pcal^{\antishriek}$. This codioperad as well as its cocomposition product can most readily be obtained by computing the Koszul dual dioperad $\Pcal^{!}$ and then consider its linear dual. We begin by presenting a tool for showing Koszulness.

\subsection{Distributive laws}
\label{distributivelaw}

From a quadratic dioperad one can extract two operads. First we note that to a dioperad $\Pcal$ one can associate its \emph{opposite dioperad} defined by $\Pcal^{\op}(m,n):=\Pcal(n,m)$. The composition product $\mu^{\op}$ is obtained from $\mu$ by reversing the direction of all graphs. Thus to a quadratic dioperad $\Pcal$ we can associate two operads $\Pcal_U$ and $\Pcal_D$ defined by $\Pcal_U(n):=\Pcal(1,n)$ and $\Pcal_D(n):=\Pcal^{\op}(1,n)$. Explicitly, for a quadratic dioperad $\Pcal=\QO{M}{R}$ with $M$ concentrated in $M(1,2)$ and $M(2,1)$, we have
\[
\Pcal_U=\QO{M(1,2)}{R(1,3)}, \bhs \bhs \Pcal_D=\QO{M(2,1)^{\op}}{R(3,1)^{\op}},
\]
where $R(1,3)$ is the part of $R$ in $\Free_{(2)}(M)(1,3)$, $M(2,1)^{\op}$ is the $\s$-module given by $M(2,1)^{\op}(2)=M(2,1)$ and zero otherwise, and $R(3,1)^{\op}$ are the relations in $\Free_{(2)}(M(2,1)^{\op})$ obtained from $R(3,1)\subset\Free_{(2)}(M)(3,1)$ by reversing the direction of the decorated graphs.

We also note that to any operad $\Pcal$ one can associate a dioperad $\widetilde{\Pcal}$ defined by $\widetilde{\Pcal}(1,n):=\Pcal(n)$ and $\widetilde{\Pcal}(m,n)=0$ for $m\neq 1$.

Next we define a product of dioperads introduced in \cite{Gan2003}. We define a \emph{two-level graph} to be a graph such that any vertex is connected to at least one other vertex and is connected to other vertices either only via its output edges or only via its input edges. The vertices can thus be divided into two levels in a unique way. We say that the vertices only connected via their outputs lie on the upper level and that the vertices only connected via their inputs lie on the lower level. Further, we call a graph $G$ \emph{reduced} if for all $v\in V_G$ it is true that $|\inp_v|\geq 1|$, $|\outp_v|\geq 1|$, and $|\outp_v|+|\inp_v\geq 3|$.   Let $\Pcal$ and $\Qcal$ be dioperads, we then define
\[
 \Pcal\Box\Qcal:=\bigoplus_{G\in\Gfrak^{\downarrow,2}_{c,0}}G\langle \Pcal^{V_1},\Qcal^{V_2} \rangle,
\]
where $\Gfrak^{\downarrow,2}_{c,0}$ is the subfamily of $\Gfrak^{\downarrow}_{c,0}$ consisting of reduced two-level graphs and $V_1$ and $V_2$ are the vertices on the lower and upper level, respectively. We say that a quadratic dioperad $\Pcal$ is given by a \emph{distributive law} if $\widetilde{\Pcal}_U\Box(\widetilde{\Pcal}_D)^{\op}=\Pcal$.

The following theorem was proved by W.~Gan.
\begin{thm}[Theorem 5.9 of \cite{Gan2003}]
\label{ganthm}
Let $\Pcal$ be a quadratic operad. If $\Pcal_U$ and $\Pcal_D$ are Koszul operads and $\widetilde{\Pcal}_U\Box(\widetilde{\Pcal}_D)^{\op}(i,j)=\Pcal(i,j)$ for $(i,j)=(2,2),(2,3),(3,2)$, then $\widetilde{\Pcal}_U\Box(\widetilde{\Pcal}_D)^{\op}=\Pcal$ and $\Pcal$ is Koszul.
\end{thm}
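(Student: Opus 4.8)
The plan is to run the ``PBW / distributive law'' argument in the dioperadic setting, the analogue of Priddy's treatment of Koszul associative algebras and of Markl's for operads. Write $M$ for the quadratic generators of $\Pcal$, concentrated in $M(1,2)$ and $M(2,1)$, and split the relation space as $R=R_U\sqcup R_D\sqcup R_\lambda$ along the arity components $(1,3)$, $(3,1)$, $(2,2)$: here $R_U$ and $R_D$ are the defining relations of $\Pcal_U$ and $\Pcal_D$, while $R_\lambda\subset\Free_{(2)}(M)(2,2)$ is the ``mixed'' part. The content of the hypothesis $\widetilde{\Pcal}_U\Box(\widetilde{\Pcal}_D)^{\op}(2,2)=\Pcal(2,2)$ is precisely that $R_\lambda$ can be read as a rewriting rule which replaces a $\Pcal_U$-generator sitting \emph{above} a $\Pcal_D$-generator by a linear combination of two-level graphs with a $\Pcal_D$-generator above a $\Pcal_U$-generator; this is the distributive law.

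First I would produce the canonical comparison map. Dioperadic composition gives a surjection $\pi\colon\widetilde{\Pcal}_U\Box(\widetilde{\Pcal}_D)^{\op}\twoheadrightarrow\Pcal$: an element of the source is a reduced two-level graph with lower vertices decorated by $\Pcal_U$ and upper vertices by $\Pcal_D$, and $\pi$ simply performs those compositions inside $\Pcal$. Surjectivity holds because every monomial of $\Free(M)$ can be brought, modulo $(R)$, into the normal form in which every $\Pcal_D$-generator lies above every $\Pcal_U$-generator: one repeatedly applies $R_\lambda$ to pull $\Pcal_U$-generators down past $\Pcal_D$-generators, interleaved with the already-fixed normal forms of $\Pcal_U$ and $\Pcal_D$ on the pure levels, and checks that this terminates --- for instance because each $R_\lambda$-move strictly lowers the number of ``$U$-above-$D$'' adjacencies. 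So the whole statement reduces to injectivity of $\pi$, i.e.\ uniqueness of this normal form.

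Uniqueness of the normal form is a local-confluence (diamond lemma) assertion for the rewriting system generated by $R_U$, $R_D$, $R_\lambda$; since the system is terminating, Newman's lemma reduces it to resolving the critical overlaps of pairs of rules, which involve exactly three generators glued in a tree. An arity count ($\text{inputs}+\text{outputs}=5$ for three glued generators) puts those configurations in the components $(1,4)$, $(4,1)$, $(2,3)$, $(3,2)$. The $(1,4)$- and $(4,1)$-overlaps are internal to $\Pcal_U$ and $\Pcal_D$ and are resolved by the hypothesis that those operads are Koszul (equivalently, that their own rewriting systems are confluent). The genuinely new overlaps --- two $R_\lambda$-moves sharing a $\Pcal_U$-vertex or an $R_\lambda$-move overlapping an $R_U$-relation, and dually --- live in $(2,3)$ and $(3,2)$. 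But the hypothesis $\widetilde{\Pcal}_U\Box(\widetilde{\Pcal}_D)^{\op}(i,j)=\Pcal(i,j)$ for $(i,j)=(2,3),(3,2)$, together with the surjectivity of $\pi$ established above, is a dimension count forcing $\pi$ to be an isomorphism in those arities --- which is exactly the statement that every such critical overlap resolves. Newman's lemma then upgrades this to global confluence, giving unique normal forms and hence $\pi$ an isomorphism in all arities, i.e.\ $\widetilde{\Pcal}_U\Box(\widetilde{\Pcal}_D)^{\op}\iso\Pcal$.

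For Koszulness I would exploit that the distributive law makes the bar construction $\BC(\Pcal)$ decompose, level by level, out of the bar constructions $\BC(\Pcal_U)$ and $\BC(\Pcal_D)$ plugged into the two-level graphs. Filtering $\BC(\Pcal)$ by the total weight carried by the upper ($\Pcal_D$-) vertices yields a spectral sequence whose first page replaces the $D$-direction by its bar homology; Koszulness of $\Pcal_D$ collapses this onto $\Pcal_D^{\antishriek}$, and then Koszulness of $\Pcal_U$ collapses the remaining $U$-direction onto $\Pcal_U^{\antishriek}$, leaving homology concentrated on the weight-equals-cohomological-degree diagonal. Identifying the surviving $\s$-bimodule with $\Pcal^{\antishriek}$ --- which, by applying the first half of the argument to the quadratic dual dioperad $\Pcal^!$ (itself a distributive law built from $(\Pcal_U)^!$ and $(\Pcal_D)^!$), is again the corresponding ``product'' of the Koszul duals of $\Pcal_U$ and $\Pcal_D$ --- shows that the inclusion $\Pcal^{\antishriek}\hookrightarrow\BC(\Pcal)$ is a quasi-isomorphism, i.e.\ $\Pcal$ is Koszul. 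I expect the main obstacle to be the combinatorial core of the third paragraph: checking that the critical overlaps are genuinely exhausted by the four listed arities, that the three conditions in the hypothesis really settle all of $(2,2)$, $(2,3)$, $(3,2)$, and --- most delicately --- exhibiting a well-founded order witnessing termination of the mixed rewriting, since naive iteration of $R_\lambda$ together with $R_U$ and $R_D$ can cycle unless the order (say, inversion count refined by the pure-level normal forms) is chosen with care. The spectral-sequence step is then routine bookkeeping with the weight bigrading.
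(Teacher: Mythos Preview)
The paper does not actually prove this theorem: it is stated as Theorem~5.9 of \cite{Gan2003} and simply cited, with pointers to \cite{Gan2003}, \cite{Markl1996}, and \cite{Dotsenko2007a} for details on distributive laws. There is therefore no in-paper proof for you to be compared against.

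For what it is worth, your sketch follows the standard strategy in those references (PBW-type isomorphism for the box product, then a filtration/spectral-sequence argument on the bar construction to deduce Koszulness), so you are on the right track. One caution: your claim that the $(1,4)$ and $(4,1)$ overlaps ``are resolved by the hypothesis that those operads are Koszul (equivalently, that their own rewriting systems are confluent)'' is not quite right as stated --- Koszulness of a quadratic operad is not literally the same as confluence of any particular rewriting presentation of it. In the actual arguments of Markl and Gan the pure-level part is handled not by a diamond lemma but by the fact that $\Pcal_U$ and $\Pcal_D$ are already given as quotients, so the box product is built directly from $\Pcal_U(m)$ and $\Pcal_D(n)$ rather than from free words; the genuine work is only in the mixed arities $(2,2)$, $(2,3)$, $(3,2)$, which is why those are the only ones appearing in the hypothesis.
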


See \cite{Gan2003}, \cite{Markl1996}, and \cite{Dotsenko2007a} for details on distributive laws.

\subsection{Koszulness of $\Bipoisson$}

Now we are ready to show the following result.

\begin{prop}
The dioperad $\Bipoisson$ and the properad generated by it are Koszul.
\end{prop}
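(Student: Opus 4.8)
The plan is to exhibit $\Bipoisson$ as a dioperad given by a distributive law between two Koszul operads and then invoke Gan's criterion, Theorem \ref{ganthm}; Koszulness of the generated properad will then follow from the properadic distributive-law machinery. The first step is to identify the two operads attached to the quadratic dioperad $\Bipoisson=\QO{M}{R}$ in the sense of \S\ref{distributivelaw}. Since $M$ is concentrated in $M(1,2)$ and $M(2,1)$ we have
\[
\Bipoisson_U=\QO{M(1,2)}{R(1,3)},\qquad \Bipoisson_D=\QO{M(2,1)^{\op}}{R(3,1)^{\op}}.
\]
The generator $M(1,2)=\bbone_1\otimes(\bbone_2[-1])$ together with the Jacobi relation \eqref{jacobi} is exactly the operad $\Lieone$ of Lie algebras with bracket of degree one, which is an operadic (de)suspension of $\Lie$ and hence Koszul. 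On the other side $M(2,1)^{\op}$ is a pair of skew-symmetric binary operations, and reversing the orientation of the graphs in \eqref{cojacobi} and \eqref{coliecompatibility} turns those relations into the Jacobi identity for each of the two brackets together with the identity saying that their sum again obeys Jacobi; thus $\Bipoisson_D\cong\Lietwo$, the operad of pairs of compatible Lie algebras, which is Koszul by \cite{Dotsenko2007} (see also \cite{Strohmayer2007a}).

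With the two operads in hand, the remaining task is to check the hypothesis $\widetilde{\Bipoisson}_U\Box(\widetilde{\Bipoisson}_D)^{\op}(i,j)=\Bipoisson(i,j)$ of Theorem \ref{ganthm} for $(i,j)=(2,2),(2,3),(3,2)$. For $(2,2)$ this should hold essentially by construction: $\Free_{(2)}(M)(2,2)$ is spanned by the ``cobracket above bracket'' graphs and the ``bracket above cobracket'' graphs, and the two relations \eqref{lieonebicompatibilitya} and \eqref{lieonebicompatibilityb} --- one for the cobracket $\coWsmall$, one for $\coBsmall$ --- rewrite every graph of the first type as a combination of graphs of the second type without imposing any new relation, which is exactly a dioperadic distributive law and restricts, in each of the two colours $\coWsmall$, $\coBsmall$, to the one used for $\Poisson$ in \cite{Merkulov2006}. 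The arities $(2,3)$ and $(3,2)$ are the genuinely computational point: one must verify that after applying these rewriting rules no new relation is forced on the weight-three compositions built from two brackets and one cobracket, respectively one bracket and two cobrackets. I expect this to reduce, colour by colour, to the corresponding check already carried out for $\Poisson$, the only extra input being that at this weight the two colours $\coWsmall$, $\coBsmall$ do not interact --- the relations \eqref{cojacobi}, \eqref{coliecompatibility} among the cobrackets are entirely absorbed into $\Bipoisson_D=\Lietwo$. Once these three identities are confirmed, Theorem \ref{ganthm} yields $\widetilde{\Bipoisson}_U\Box(\widetilde{\Bipoisson}_D)^{\op}=\Bipoisson$ and the Koszulness of $\Bipoisson$.

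For the properad generated by $\Bipoisson$, the idea is to lift the above dioperadic distributive law to a properadic one between the properad $\Lieone$ (regarded as concentrated in arities $(1,n)$) and the properad $\Colietwo$ (concentrated in arities $(n,1)$); both are Koszul properads because $\Lieone$ and $\Lietwo$ are Koszul operads. The properadic distributive-law results of \cite{Vallette2007b}, applied exactly as in the treatment of $\Poisson$ in \cite{Merkulov2006}, then give that the generated properad is Koszul as well. The main obstacle in the whole argument is thus the arity-$(2,3)$ and $(3,2)$ bookkeeping in Gan's criterion; everything else is unwinding definitions and quoting the cited Koszulness statements.
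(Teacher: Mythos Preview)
Your approach is essentially identical to the paper's: identify $\Bipoisson_U=\Lieone$ and $\Bipoisson_D=\Lietwo$, cite their Koszulness, verify Gan's criterion in arities $(2,2)$, $(2,3)$, $(3,2)$, and conclude via Theorem~\ref{ganthm}. The paper's own proof is even terser than yours---it simply asserts the $(i,j)$-checks are ``straightforward'' without the colour-by-colour discussion you give---and it does not explicitly address the properadic part in the proof body (that is deferred to the later section on $\Free^{\propp}_{\dioperad}$ via Merkulov--Vallette's Proposition~48), whereas you correctly invoke the properadic distributive-law machinery of \cite{Vallette2007b}.
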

\begin{proof}
We observe $\Bipoisson_U=\Lieone$ and $\Bipoisson_D=\Lietwo$. It was shown in \cite{Strohmayer2007a} that the operad $\Lietwo$ is Koszul and it is well-known that $\Lieone$ is Koszul. It is straightforward to see that $\Lieone\Box(\Lietwo)^{\op}(i,j)=\Bipoisson(i,j)$ for $(i,j)=(2,2),(2,3),(3,2)$; thus by Theorem \ref{ganthm} we obtain that $\Bipoisson$ is Koszul.
\end{proof}

\subsection{The Koszul dual dioperad of $\Bipoisson$}

\begin{prop}
The Koszul dual of $\Bipoisson$ is
\[
\Bipoisson^!=\QO{N}{S}
\]
where
\[
N(m,n)=
\begin{cases}
\bbone_1\otimes(\sgn_2[1]) & \text{ if $(m,n)=(1,2)$}\\
(\bbone_2\oplus\bbone_2) \otimes \bbone_1 & \text{ if $(m,n)=(2,1)$} \\
0 & \text{otherwise}
\end{cases}
\]
and $S=S(1,3)\sqcup S(3,1) \sqcup S(2,2)$ consists of subsets $S(i,j)\subset\Free(N)(i,j)$. If we denote the natural basis element of $N(1,2)$ by $\Ysmall$ and the basis elements of $N(1,2)$ by $\Wsmall$ and $\Bsmall$,
then $S$ is given by
\begin{align}
S(1,3): &\YY{1}{2}{3}-\YY{2}{3}{1}, \YY{1}{2}{3}-\YY{3}{1}{2} \label{associativity}\\
S(3,1): &\coWcoW{1}{2}{3}-\coWcoW{2}{3}{1}, \shs \coWcoW{1}{2}{3}-\coWcoW{3}{1}{2} \label{coassociativitya}\\
              &\coBcoB{1}{2}{3}-\coBcoB{2}{3}{1},\shs \coBcoB{1}{2}{3}-\coBcoB{3}{1}{2}, \label{coassociativityb}\\
              &\coWcoB{1}{2}{3}-\coWcoB{2}{3}{1}, \shs \coWcoB{1}{2}{3}-\coWcoB{3}{1}{2},\label{comcompatibilitya}\\
              &\coWcoB{1}{2}{3}- \coBcoW{1}{2}{3}, \shs \coWcoB{2}{3}{1}- \coBcoW{2}{3}{1}, \shs \coWcoB{3}{1}{2}- \coBcoW{3}{1}{2}\label{comcompatibilityb}\\
S(2,2): &\YcoW{1}{2}{1}{2}+\coWopY{1}{2}{1}{2},\shs \YcoW{1}{2}{1}{2}-\coWopY{2}{1}{1}{2},\shs
         \YcoW{1}{2}{1}{2}+\coWopY{1}{2}{2}{1},\shs \YcoW{1}{2}{1}{2}-\coWopY{2}{1}{2}{1}\label{bicomcompatibilitya}\\
        &\YcoB{1}{2}{1}{2}+\coBopY{1}{2}{1}{2},\shs \YcoB{1}{2}{1}{2}-\coBopY{2}{1}{1}{2}, \shs
         \YcoB{1}{2}{1}{2}+\coBopY{1}{2}{2}{1},\shs \YcoB{1}{2}{1}{2}-\coBopY{2}{1}{2}{1}.\label{bicomcompatibilityb}
\end{align}

\end{prop}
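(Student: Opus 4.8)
The plan is to compute $\Bipoisson^! = \QO{M^{\cz}}{R^\perp}$ straight from the definition in \S\ref{koszuldualGalgebra}. The generating $\s$-bimodule $M$ of $\Bipoisson$ has $M(1,1)=0$, is connected, and is finite dimensional in every arity, so the Koszul dual dioperad is defined and equals $\QO{M^{\cz}}{R^\perp}$ with $R^\perp\subset\Free_{(2)}(M^{\cz})$ the orthogonal complement of $(R)_{(2)}$ under the pairing of \S\ref{koszuldualGalgebra}. First I would compute the Czech dual: from $M(1,2)=\bbone_1\otimes(\bbone_2[-1])$ one gets $M^{\cz}(1,2)=\sgn_1\otimes(\bbone_1\otimes\bbone_2[-1])^*\otimes\sgn_2=\bbone_1\otimes(\sgn_2[1])$, where dualizing the desuspension of the commutative generator produces the suspension of a sign generator; and from $M(2,1)=(\sgn_2\oplus\sgn_2)\otimes\bbone_1$ one gets $M^{\cz}(2,1)=\sgn_2\otimes(\sgn_2\oplus\sgn_2)\otimes\bbone_1=(\bbone_2\oplus\bbone_2)\otimes\bbone_1$, while $M^{\cz}(m,n)=0$ otherwise. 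This is exactly the $\s$-bimodule $N$ in the statement, so it remains only to identify $R^\perp$ with $S$.

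Next I would split the weight-two part by arity. Any connected reduced two-vertex tree built from $M$ (or from $M^{\cz}$) has arity $(1,3)$, $(3,1)$, or $(2,2)$: grafting two copies of the bracket gives $(1,3)$, two copies of a cobracket give $(3,1)$, and one bracket with one cobracket gives $(2,2)$. Hence both $(R)_{(2)}$ and $R^\perp$ decompose accordingly, and it suffices to check, in each of these three arities, that the relations $S(i,j)$ listed in the statement (i)~pair to zero with the relations $R(i,j)$ under $\langle\_,\_\rangle$ and (ii)~have the correct dimension, namely $\dim S(i,j)=\dim\Free_{(2)}(M)(i,j)-\dim R(i,j)$. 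Together these give $R^\perp=S(1,3)\sqcup S(3,1)\sqcup S(2,2)$.

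For arity $(1,3)$ this is the classical operadic duality $(\Lieone)^!=\Comone$: the space $\Free_{(2)}(M)(1,3)$ is three dimensional, the one-dimensional Jacobi relator \eqref{jacobi} is orthogonal to the two associativity relators \eqref{associativity} of the (desuspended, hence homotopically commutative) generator $\Ysmall$, and $1+2=3$. For arity $(3,1)$, reversing all arrows turns this into the computation of the Koszul dual of $\Lietwo=\Bipoisson_D$: the required identification of $(\Lietwo)^!$ — coassociativity of each of $\Wsmall$ and $\Bsmall$ (relators \eqref{coassociativitya}, \eqref{coassociativityb}) together with the mixed relators \eqref{comcompatibilitya}, \eqref{comcompatibilityb} expressing total coassociativity of the two cobrackets and their coincidence — is carried out in \cite{Strohmayer2007a} (see also \cite{Dotsenko2007}), and one only needs to transport it through $\Pcal\mapsto\widetilde{\Pcal}^{\op}$; the dimension count $3+(2+2+2+3)=12$ matches. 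For arity $(2,2)$, the subspace of $\Free_{(2)}(M)(2,2)$ spanned by the bracket--cobracket trees built from $\Ysmall$ and $\coWsmall$ is five dimensional, the five terms appearing in the relator \eqref{lieonebicompatibilitya} being a basis (the bracket-on-top configuration contributing one dimension and the cobracket-on-top configuration four); the one-dimensional $\Poisson$-compatibility relator \eqref{lieonebicompatibilitya} then has as orthogonal complement the four-dimensional span of \eqref{bicomcompatibilitya}, and the computation with \eqref{lieonebicompatibilityb} and \eqref{bicomcompatibilityb} in place of these is verbatim the same. This last step is exactly the distributive-law computation Merkulov performed for $\Poisson$ in \cite{Merkulov2006} (equivalently Gan's for $\Liebi$ in \cite{Gan2003}), specialised to the symmetric bracket and antisymmetric cobrackets at hand.

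The genuinely delicate point is the sign bookkeeping in step (i): one must verify that these orthogonality relations hold with the precise signs written in \eqref{associativity}--\eqref{bicomcompatibilityb}, which combine the Koszul--Quillen signs of the degree-shifted generators (the $[-1]$ on the bracket of $\Bipoisson$, the $[1]$ on its dual), the $\sgn_m$ and $\sgn_n$ twists built into $M^{\cz}$, and the local edge-labelings implicit in $\langle (G,[e^*_a\otimes e^*_b]),(G',[e_c\otimes e_d])\rangle=\delta_{G,G'}\delta_{a,c}\delta_{b,d}$. An unchecked sign would silently corrupt the differential of $\CBC(\Bipoisson^{\antishriek})$ computed in the next section, so the signs in all three arities must be checked carefully before concluding $\Bipoisson^!=\QO{N}{S}$.
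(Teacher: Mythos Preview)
Your proposal is correct and follows exactly the same route as the paper: verify $N=M^{\cz}$ and then check that $(S)$ is the orthogonal complement of $(R)$ under the pairing of \S\ref{koszuldualGalgebra}. The paper's own proof is in fact just those two sentences, so your write-up is a fleshed-out version of the same argument --- the arity-by-arity decomposition, the dimension counts, and the references to the known operadic dualities $(\Lieone)^!=\Comone$ and $(\Lietwo)^!=\Comtwo$ are precisely the details a reader would have to supply when unpacking the paper's terse ``we notice that''.
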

\begin{proof}
For $\Bipoisson=\QO{M}{R}$ we first observe that $N=M^{\cz}$. Recalling the pairing described in \ref{koszuldualGalgebra} we notice that $(S)$ is the orthogonal complement to $(R)$ with respect to this pairing. 
\end{proof}

Like $\Bipoisson$, its Koszul dual dioperad $\Bipoisson^!$ is built from two operads. The first one, $(\Bipoisson^!)_U$, generated by $\Ysmall$ with relations \eqref{associativity} is the operad $\Comone$ of commutative algebras with the operation of degree minus one, Koszul dual to $\Lieone$. The second one, $(\Bipoisson^!)_D$, generated by $\coWsmall$ and $\coBsmall$ with relations \eqref{coassociativitya}, \eqref{coassociativityb}, \eqref{comcompatibilitya} and \eqref{comcompatibilityb} is the operad of totally compatible commutative algebras. This operad was defined and shown to be Koszul dual to $\Lietwo$ in \cite{Dotsenko2007}. See \cite{Strohmayer2007a} for a treatment of operads encoding compatible structures. The relations \eqref{bicomcompatibilitya} and \eqref{bicomcompatibilityb} are orthogonal to the compatibility relations of Lie $1$-bialgebras and are related to the dioperad of Frobenius algebras; the dioperad of Frobenius algebras is Koszul dual to the dioperad of Lie bialgebras, see e.g.~\cite{Gan2003}.

By straightforward graph calculations we obtain the following result.

\begin{prop}
\label{koszuldualbasis}
The dioperad $\Bipoisson^!$ has as underlying $\s$-bimodule
\[
\Bipoisson^!(m,n)=
\begin{cases}
\underbrace{( \bbone_m\oplus \dotsb \oplus\bbone_m )}_{\text{$m$ copies}} \otimes \sgn_n [n-1]  & \text{if $m+n\geq3$}\\
0 & \text{otherwise}.
\end{cases}
\]
Explicitly, a $\K$-basis for $\Bipoisson^!(m,n)$ is given by
\[
\left\{ \shs \YcoWcoBcorolla \shs \right\}_{0\leq i \leq m-1}.
\]
\end{prop}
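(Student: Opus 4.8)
The plan is to prove this by induction on $m+n$, using the distributive law structure of $\Bipoisson^!$. Since $\Bipoisson^! = \QO{N}{S}$ is given by a distributive law between the operads $\Comone$ (generated by $\Ysmall$, relations \eqref{associativity}) on the upper level and the operad of totally compatible commutative algebras (generated by $\Wsmall$ and $\Bsmall$, relations \eqref{coassociativitya}--\eqref{comcompatibilityb}) on the lower level, every element of $\Bipoisson^!(m,n)$ can be written as a sum of two-level graphs: a corolla built from the $\Comone$-generator at the bottom, grafted onto corollas built from the $\Wsmall$/$\Bsmall$-generators at the top. First I would record the two one-operad computations: the operad $\Comone(n)$ is one-dimensional, spanned by the single $n$-corolla (with a degree shift $[n-1]$ and the sign representation coming from the skew-symmetry of $\Ysmall$), and the operad of totally compatible commutative algebras has $\Comone$-like behaviour in each "colour" — relations \eqref{comcompatibilityb} force any monomial in $\Wsmall$ and $\Bsmall$ to depend only on the \emph{number} of $\Bsmall$-vertices (not their position), so the space of $n$-ary operations is $(n)$-dimensional with basis indexed by how many of the $n-1$ internal vertices are black. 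This is where the two direct summands "$m$ copies of $\bbone_m$" and the index $0 \le i \le m-1$ come from: on the dual (codioperad) side the cobracket splits into $i$ black outputs and $m-1-i$ white outputs.

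Next I would assemble these via the $\Box$-product. By Theorem \ref{ganthm} and the Koszulness established in the previous subsection, $\Bipoisson^! = (\Bipoisson^!)_U \Box ((\Bipoisson^!)_D)^{\op}$ as $\s$-bimodules, so it suffices to count reduced two-level graphs modulo the relations. A reduced two-level graph contributing to weight $(m,n)$ has one bottom vertex (the $\Comone$-corolla, since $\Comone$ is generated in arity $2$ and associativity \eqref{associativity} collapses all bracketings) with some number of leaves going up to top vertices, each a compatible-commutative corolla. The relations \eqref{bicomcompatibilitya}--\eqref{bicomcompatibilityb} are precisely what is needed to move the bottom $\Ysmall$-vertex past the top vertices and normalise: they let one rewrite any such two-level graph as the single "one bottom, one top" shape $\YcoWcoBcorolla$ with $i$ black leaves among the $m$ outputs. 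The upshot is that $\Bipoisson^!(m,n)$ is spanned by the displayed family, and these are linearly independent because the weight-$(2,2)$, $(2,3)$, $(3,2)$ checks in the Koszulness proof already pin down the dimensions in low degree and the distributive-law isomorphism propagates this. Counting: $m$ choices of $i$, the $\s_n$-action on the $n$ inputs of the $\Comone$-part being the sign representation shifted by $[n-1]$, and a trivial (or permuted-trivial) action on the $m$ outputs, yields exactly $\bigl(\bbone_m^{\oplus m}\bigr)\otimes \sgn_n[n-1]$.

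The main obstacle I anticipate is bookkeeping rather than conceptual: verifying carefully that the relations \eqref{bicomcompatibilitya}--\eqref{bicomcompatibilityb} together with \eqref{comcompatibilityb} suffice to reduce \emph{every} reduced two-level graph to the normal form $\YcoWcoBcorolla$ — in particular that there is no residual dependence on which inputs of the bottom vertex feed which top vertex, and that black/white multiplicities, not positions, are the only invariant. This amounts to checking the distributive law is "nice" in Gan's sense, which is exactly what Theorem \ref{ganthm} guarantees once the low-arity dimension counts match; so the honest content is the finite check $\Lieone\Box(\Lietwo)^{\op}(i,j) = \Bipoisson(i,j)$ for $(i,j) = (2,2),(2,3),(3,2)$ already invoked above, dualised. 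I would therefore present the argument as: (1) identify the two one-operad pieces and their bases; (2) invoke the distributive law isomorphism; (3) normalise two-level graphs to the stated corolla form using the quadratic relations; (4) read off the $\s$-bimodule structure and the degree shift. The skew-symmetry of $\Ysmall$ (basis element of $N(1,2) = \bbone_1 \otimes \sgn_2[1]$) accounts for the $\sgn_n$ and the $[n-1]$ shift accumulates one degree per internal $\Ysmall$-vertex, of which a normalised $(m,n)$-element has $n-1$.
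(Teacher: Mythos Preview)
Your approach is correct and is essentially a fleshed-out version of the paper's proof, which consists of the single sentence ``By straightforward graph calculations we obtain the following result.'' The rewriting argument---normalise the $\Comone$-part to a single corolla, normalise the compatible-commutative part so that only the number $i$ of black vertices matters, then use \eqref{bicomcompatibilitya}--\eqref{bicomcompatibilityb} to push everything into the shape $\YcoWcoBcorolla$---is exactly the intended graph calculation, and your identification of the $\s$-bimodule structure and degree shift is right.

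One point deserves tightening. You invoke Theorem~\ref{ganthm} to conclude that $\Bipoisson^! = (\Bipoisson^!)_U \Box ((\Bipoisson^!)_D)^{\op}$, but as stated that theorem is a criterion for $\Pcal$, not for $\Pcal^!$; and saying the low-arity check for $\Bipoisson$ ``dualised'' handles it is not quite a proof. You have two clean fixes. Either cite the companion statement in \cite{Gan2003} that the Koszul dual of a dioperad given by a distributive law is again given by a distributive law (this is how one usually argues), or apply Theorem~\ref{ganthm} directly to $\Bipoisson^!$: the operads $\Comone$ and $\Comtwo$ are Koszul (as Koszul duals of the Koszul operads $\Lieone$ and $\Lietwo$), and the required equalities $\widetilde{\Comone}\Box(\widetilde{\Comtwo})^{\op}(i,j)=\Bipoisson^!(i,j)$ for $(i,j)=(2,2),(2,3),(3,2)$ are again finite checks. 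Either route closes the gap and makes the linear-independence half of your argument rigorous.
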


\subsection{The minimal resolution of $\Bipoisson$}
\label{bipoissonresolution}

We now have everything we need to describe a minimal resolution of $\Bipoisson$ explicitly.

\begin{thm}
\label{bipoissonresolutionthm}
The Koszul resolution $\Bipoisson_\infty$ of the dioperad $\Bipoisson$ is the quasi-free dioperad on the $\sbimodule$  $E=\{E(m,n)\}_{m,n\geq 1, m+n\geq 3}$ where
\[
E(m,n)=
\begin{cases}
\underbrace{( \sgn_m\oplus \dotsb \oplus\sgn_m )}_{\text{$m$ copies}} \otimes \bbone_n [m-2]  & \text{if $m+n\geq3$}\\
0 & \text{otherwise}.
\end{cases}
\]
We denote the element of $E$ corresponding to the basis element of $\Bipoisson^!(m,n)$ with $i$ black operations by
\[
\boxcorolla{1}{n}{i}{1}{m}\bhs\sim\bhs\YcoWcoBcorolla \shs.
\]

The differential of $\Bipoisson_\infty$ is then given by
\[
\delta \colon\boxcorolla{1}{n}{i}{1}{m} \mapsto \sum_{\substack{ 1 \leq k \leq n \\ 0 \leq j \leq m-1 \\ 2 \leq j+k \leq m+n-2 \\ i_1+i_2=i \\ (k,n-k)\text{-shuffles} \shs \tau \\ (j,m-j)\text{-shuffles} \shs \sigma }}
\hspace{-21pt}(-1)^{\sgn(\sigma)+j(m-j)}\hspace{5pt}
\mirroredsplitboxcorolla{ \sigma(j+1) }{ \sigma(m) }{ \tau(k+1) }{ \tau(n)}{\sigma(1) }{ \sigma(j) }{ \tau(1) }{ \tau(k) }.
\]
\end{thm}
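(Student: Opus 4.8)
The plan is to derive everything from the Koszul machinery already in place. Since the preceding proposition shows $\Bipoisson$ is Koszul and carries no internal differential, the Koszul resolution theorem of \S\ref{koszulresolution} gives a quasi-isomorphism $\CBC(\Bipoisson^{\antishriek})\isoto\Bipoisson$, and $\CBC(\Bipoisson^{\antishriek})=(\Free(\Sigma\bar{\Bipoisson}^{\antishriek}),\delta)$ with $\delta$ the derivation determined by the two-vertex part of the cocomposition coproduct of $\Bipoisson^{\antishriek}$ (the ``$d$'' part of $\delta$ vanishes since $\Bipoisson^{\antishriek}$ has no differential). Minimality is then automatic, because $\delta$ sends a cogenerator into $\Free_{(2)}\subset\bigoplus_{i\geq 2}\Free_{(i)}$. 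So the statement splits into two tasks: identify the $\s$-bimodule $E=\Sigma\bar{\Bipoisson}^{\antishriek}$, and compute the relevant piece of $\Delta_{\Bipoisson^{\antishriek}}$.

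For the first task I would feed Proposition \ref{koszuldualbasis} into the isomorphism $(\Bipoisson^{\antishriek})_{(s)}(m,n)\iso\Sigma^{-s}((\Bipoisson^!)_{(s)}(m,n))^{\cz}$ of \S\ref{koszuldualGalgebra}. The corolla basis of $\Bipoisson^!$ shows that $\Bipoisson^!(m,n)$ is concentrated in weight $s=m+n-2$ — the number of binary generators in a corolla with $m$ outputs and $n$ inputs — so what remains is bookkeeping. Dualizing $\bbone_m^{\oplus m}\otimes\sgn_n[n-1]$ and applying the Czech twist gives $\sgn_m^{\oplus m}\otimes\bbone_n[1-n]$; desuspending $m+n-2$ times and then applying the cobar suspension $\Sigma$ produces $\sgn_m^{\oplus m}\otimes\bbone_n[m-2]$, which is the claimed $E(m,n)$. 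The same isomorphism being one of $\Gfrak^*$-coalgebras is what lets us label the generator $\boxcorolla{1}{n}{i}{1}{m}$ by the dual of the corolla basis element of $\Bipoisson^!(m,n)$ with $i$ black co-operations.

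For the second task, the key observation is that under that isomorphism the cocomposition coproduct of $\Bipoisson^{\antishriek}$ is the linear dual of the dioperadic composition product of $\Bipoisson^!$. A dioperadic composition glues two corollas along a single internal edge, so composing two of the explicit basis corollas of $\Bipoisson^!$ yields again a corolla, and the data of such a composition is exactly: a partition of the $n$ inputs and of the $m$ outputs between the two pieces — recorded by a $(k,n-k)$-shuffle $\tau$ and a $(j,m-j)$-shuffle $\sigma$ — together with a distribution $i_1+i_2=i$ of the black co-operations, subject to each piece being a genuine (valence $\geq 3$) corolla, i.e. $1\le k\le n$, $0\le j\le m-1$, $2\le j+k\le m+n-2$. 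Transposing this composition turns it into a sum, over all such splittings, of the two-corolla graph displayed in the statement; extending by the derivation property (\S\ref{derivationsoffree}) then yields the asserted formula for $\delta$, and $\delta^2=0$ holds automatically by coassociativity of $\Delta_{\Bipoisson^{\antishriek}}$.

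The main obstacle is the sign. One must carry the Koszul--Quillen signs through three layers of identification — the Czech dual, the $\Sigma^{-s}$ desuspension followed by the cobar $\Sigma$, and the unordered tensor products together with the local edge labelings used to write $G\langle M\rangle$ — and verify that, after transposing the composition, they collapse to precisely the factor $(-1)^{\sgn(\sigma)+j(m-j)}$ in front of each summand. The combinatorial shape of $\delta$ is forced by the ``dual of composition'' description; it is this sign normalization, and checking its compatibility with the skew/symmetry conventions built into $E$ and into the corolla basis of $\Bipoisson^!$, that needs the real care.
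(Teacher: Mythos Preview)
Your proposal is correct and follows essentially the same route as the paper: invoke Koszulness to get $\Bipoisson_\infty=\CBC(\Bipoisson^{\antishriek})$, compute $E=\Sigma\bar{\Bipoisson}^{\antishriek}$ via the isomorphism $(\Pcal^{\antishriek})_{(s)}(m,n)\iso\Sigma^{-s}((\Pcal^!)_{(s)}(m,n))^{\cz}$ together with Proposition~\ref{koszuldualbasis}, and then read off $\delta$ as the linear dual of the composition product of $\Bipoisson^!$. Your exposition is in fact more detailed than the paper's, which dispatches the differential with ``tedious but straightforward graph calculations''; your discussion of the shuffle indexing and your caution about tracking the sign through the Czech dual, the desuspensions, and the unordered tensor conventions are exactly the points one has to get right.
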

\begin{proof}

From the Koszulness of $\Bipoisson$ it follows that $\Bipoisson_\infty=\Omega(\Bipoisson^{\antishriek})$ is a quasi-free resolution of $\Bipoisson$. The cobar construction is given by $\Omega(\Bipoisson^{\antishriek})=\Free(\Sigma\ol{\Bipoisson^{\antishriek}}))$. We observed in \S \ref{koszuldualGalgebra} that for a dioperad $\Pcal$ we have $(\Pcal^{\antishriek})_{(s)}(m,n)\iso\Sigma^{-s}((\Pcal^!)_{(s)}(m,n))^{\cz}$. Since $\Bipoisson^{\antishriek}(m,n)$ is concentrated in weight $m+n-2$ it follows from Proposition \ref{koszuldualbasis} that
\[
\Bipoisson^{\antishriek}(m,n)=\underbrace{( \sgn_m\oplus \dotsb \oplus\sgn_m )}_{\text{$m$ copies}} \otimes \bbone_n [m-1].
\]
Setting $E=\Sigma\ol{\Bipoisson^{\antishriek}}$ the first assertion of the theorem follows.

Since $\Bipoisson$ has zero differential it follows that the differential $\delta$ of $\Omega(\Bipoisson^{\antishriek})$ is fully determined by the cocomposition coproduct of $\Bipoisson^{\antishriek}$. Through tedious but straightforward graph calculations one can determine the composition product of $\Bipoisson^{!}$. Considering the linear dual of this product yields the differential $\delta$.
\end{proof}

\subsection{The minimal resolution of $\Lietwo$}
\label{lietworesolution}

The minimal resolution of the operad $\Lietwo$ of compatible Lie algebras will play an important role in the interpretation of bi-Hamiltonian structures on formal graded manifolds as algebraic structures on the structure sheaf. We get it for free from the preceding theorem.

\begin{cor}
The minimal resolution $(\Lietwo)_\infty$ of the operad $\Lietwo$ of pairs of linearly compatible Lie algebras is the quasi-free operad on the $\smodule$  $E=\{E(n)\}_{n\geq2}$ where
\[
E(n)=
\begin{cases}
\underbrace{\sgn_n\oplus \dotsb \oplus\sgn_n }_{\text{$n$ copies}} [n-2]  & \text{if $n\geq2$}\\
0 & \text{otherwise}.
\end{cases}
\]
Denote the natural basis of $E(n)$ (cf.~Theorem \ref{bipoissonresolutionthm}) by
\[
\lietwoboxcorolla{1}{n}{i} \bhs\bhs 0\leq i \leq n-1.
\]
The differential of $(\Lietwo)_\infty$ is then given by
\[
\delta\colon\lietwoboxcorolla{1}{n}{i}\mapsto
 \sum_{\substack{ 2 \leq k \leq n-1 \\ i_1+i_2=i \\ (k,n-k)\text{-shuffles} \shs \tau }}
\hspace{-15pt}(-1)^{\sgn(\tau)+(k-1)(n-k+1)}\hspace{-5pt}\lietwosplitboxcorolla{\tau(1)}{\tau(k)}{\tau(k+1)}{\tau(n)}.
\]
\end{cor}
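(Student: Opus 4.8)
This corollary is the specialisation of Theorem~\ref{bipoissonresolutionthm} to the part of the resolution with a single input edge, so the plan is to make this restriction precise. First I would recall, from the proof of Koszulness of $\Bipoisson$, that $\Bipoisson_D=\Lietwo$ and, dually, that $(\Bipoisson^!)_D$ is the operad of totally compatible commutative algebras, which is $\Lietwo^!$; as part of the underlying distributive-law structure, the composition product of $\Bipoisson^!$ restricts on the single-input part to that of $\Lietwo^!$. Since $\Bipoisson^!(n,1)$ is concentrated in weight $n-1$ and, by Proposition~\ref{koszuldualbasis}, equals $\bbone_n^{\oplus n}$, the relation $(\Pcal^{\antishriek})_{(s)}(m,n)\iso\Sigma^{-s}((\Pcal^!)_{(s)}(m,n))^{\cz}$ of \S\ref{koszuldualGalgebra} identifies the sub-$\s$-module of $\Bipoisson^{\antishriek}$ spanned by its $(n,1)$-components with $\sgn_n^{\oplus n}[n-1]$, and this identification respects the cocomposition coproducts. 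In other words, the single-input part of the codioperad $\Bipoisson^{\antishriek}$ is exactly the Koszul dual cooperad $\Lietwo^{\antishriek}$.

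Next I would check that the cobar construction commutes with this restriction. No generator of $E=\Sigma\overline{\Bipoisson^{\antishriek}}$ has zero inputs, so in a tree $G\in\Gfrak^{\downarrow}_{c,0}$ of total arity $(n,1)$ the local input-arities of the vertices sum to $|E^{\internal}_G|+1=|V_G|$ (a tree has $|V_G|-1$ internal edges), which forces $|E^{\inp}_v|=1$ for every vertex $v$. Hence the single-input part of the free dioperad $\Free(E)=\Omega(\Bipoisson^{\antishriek})$ is the free operad on $E(\bullet,1)$, and the differential $\delta$ preserves this part, since a two-vertex tree of arity $(n,1)$ again has every vertex of input-arity one. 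Combining the two steps, the single-input part of $\Omega(\Bipoisson^{\antishriek})$, together with its differential, equals $\Omega(\Lietwo^{\antishriek})$; as $\Lietwo$ is Koszul \cite{Strohmayer2007a}, this is the minimal resolution $(\Lietwo)_\infty$. Reading off $E(\bullet,1)$ from Theorem~\ref{bipoissonresolutionthm} then gives $E(n)=\sgn_n^{\oplus n}[n-2]$, the generating $\s$-module asserted in the corollary.

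Finally, the differential of $(\Lietwo)_\infty$ is the formula of Theorem~\ref{bipoissonresolutionthm} evaluated with the number of input edges equal to $1$: the constraint $1\leq k\leq n$ there forces $k=1$ and a trivial inner shuffle $\tau$, the mixed constraint $2\leq j+1\leq m-1$ becomes $1\leq j\leq m-2$, and after relabelling the remaining summation index together with the outer shuffle $\sigma$, the surviving terms — carrying the sign $(-1)^{\sgn(\sigma)+j(m-j)}$ — become precisely the expression in the statement. I expect the only genuine work in writing this up to be the careful matching of the shuffle ranges and of the Koszul--Quillen signs under the specialisation, the same tedious but routine graph-and-sign bookkeeping already invoked in the proof of Theorem~\ref{bipoissonresolutionthm}, together with spelling out the elementary fact that the single-input part of a free dg dioperad is the corresponding free dg operad.
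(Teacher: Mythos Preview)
Your proposal is correct and takes essentially the same approach the paper implicitly intends: the paper gives no proof at all, merely stating that the corollary ``comes for free from the preceding theorem,'' and your argument spells out precisely why---by identifying the single-input part of $\Omega(\Bipoisson^{\antishriek})$ with $\Omega(\Lietwo^{\antishriek})$ via the distributive-law structure and the tree-counting observation, then specialising the formula of Theorem~\ref{bipoissonresolutionthm}. Your closing caveat about matching shuffle ranges and signs is apt, since the relabelling $j\leftrightarrow k$ and the passage to the opposite operad do require a little care, but this is exactly the routine bookkeeping you flag.
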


Algebras over the operad $(\Lietwo)_\infty$ are defined as follows.

\begin{defn*}
 A dg vector space $V$ together with a family $\{\leftsub{i}{l}_n\}_{n\in\N,1\leq i \leq n}$ of maps $\leftsub{i}{l}_n\colon \wedge^n V\to V$ of degree $2-n$ is called an \emph{$\ELL^2_\infty$-algebra} if the following condition is satisfied for all $n,k\in\N$ with $2 \leq k\leq n+1$
\[
 \sum_{\substack{r+s=n+1\\ i+j=k \\\text{$(s,r-1)$-unshuffles $\sigma$}}}  \epsilon(\sigma)\sgn(\sigma)(-1)^{r(s-1)} \leftsub{i}{l}_r(\leftsub{j}{l}_s(v_{\sigma(1)},\dotsc v_{\sigma(j)}),v_{\sigma(j+1)},\dotsc, v_{\sigma(n)}).
\]
Here the sign $\epsilon(\sigma)$ is the sign appearing from the Koszul-Quillen sign rule.
\end{defn*}

\begin{rem*}
Notice that the subfamilies $\{\leftsub{1}{l}_i\}_{i\in\N}$ and $\{\leftsub{i}{l}_i\}_{i\in\N}$ both are $\ELL_\infty$ algebras sharing the same differential $\leftsub{1}{l}_1$. The rest of the brackets model the higher homotopies of the compatibility of the brackets $\leftsub{1}{l}_2$ and $\leftsub{2}{l}_2$. If these are the only non-zero brackets, then an $\ELL^2_\infty$-algebra is a pair of compatible Lie algebras.
\end{rem*}

\subsection{From dioperads to props}

There exists a forgetful functor from the category of properads to the category of dioperads which is denoted by $\Ucal^{\dioperad}_{\properad}$. It keeps the same underlying $\s$-bimodule but only allows composition along graphs of genus zero. The functor $\Ucal^{\dioperad}_{\properad}$ has a left adjoint which is denoted by $\Free^{\properad}_{\dioperad}$. For a quadratic dioperad $\Pcal=\Free^{\downarrow}_{c,0}(M)/(R)$ we have $\Free^{\properad}_{\dioperad}(\Pcal)=\Free^{\downarrow}_c(M)/(R)$, where in the latter case $(R)$ is the properadic ideal generated by $R$. The functor $\Free^{\properad}_{\dioperad}$ is not exact, Proposition 45 of \cite{Merkulov2007}, however in the same paper it is proved, Proposition 48, that if a dioperad is given by a distributive law then a quasi-free resolution of the dioperad is still a resolution when this functor is applied.

The step from properads to props is less troublesome. There exists a similar pair of functors $\Ucal^{\properad}_{\propp}$ and $\Free^{\propp}_{\properad}$. Also here it is true that for a quadratic properad $\Pcal=\Free^{\downarrow}_{c}(M)/(R)$ we have $\Free^{\propp}_{\properad}(\Pcal)=\Free^{\downarrow}(M)/(R)$, where $(R)$ is the propic ideal generated by $R$. By \S 7.4 of \cite{Vallette2007b} the functor $\Free^{\propp}_{\properad}$ is exact.  Let $\Free^{\propp}_{\dioperad}$ denote the composition $\Free^{\propp}_{\properad}\circ\Free^{\properad}_{\dioperad}$. We obtain the following result.

\begin{prop}
With the notation
\[
\Bipoisson=\Free^{\downarrow}_{c,0}(M)(R) \bhs\text{and}\bhs \Bipoisson_\infty=(\Free^{\downarrow}_{c,0}(E),\delta)
\]
we have
\[
 \Free^{\propp}_{\dioperad}(\Bipoisson)=\Free^{\downarrow}(M)(R) \bhs\text{and}\bhs\Free^{\propp}_{\dioperad}(\Bipoisson_\infty)=(\Free^{\downarrow}(E),\delta),
\]
moreover, the latter is a quasi-free resolution of the former.
\end{prop}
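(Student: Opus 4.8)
The plan is to deduce the statement from the factorization $\Free^{\propp}_{\dioperad}=\Free^{\propp}_{\properad}\circ\Free^{\properad}_{\dioperad}$ together with the properties of the two functors recalled above. The two displayed identities are formal: applying the description of $\Free^{\properad}_{\dioperad}$ on quadratic presentations to $\Bipoisson=\Free^{\downarrow}_{c,0}(M)/(R)$ gives the properad $\Free^{\downarrow}_c(M)/(R)$, and applying the analogous description of $\Free^{\propp}_{\properad}$ to this gives the prop $\Free^{\downarrow}(M)/(R)$; composing yields $\Free^{\propp}_{\dioperad}(\Bipoisson)=\Free^{\downarrow}(M)/(R)$. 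Taking the relations to be empty in the same formulas gives $\Free^{\properad}_{\dioperad}(\Free^{\downarrow}_{c,0}(E))=\Free^{\downarrow}_c(E)$ and $\Free^{\propp}_{\properad}(\Free^{\downarrow}_c(E))=\Free^{\downarrow}(E)$, so the underlying prop of $\Free^{\propp}_{\dioperad}(\Bipoisson_\infty)$ is $\Free^{\downarrow}(E)$; and since a $\Gfrak^*$-algebra derivation of a free object is determined by its restriction to the generators (\S\ref{derivationsoffree}), the quasi-free differential $\delta$ of $\Bipoisson_\infty$ is transported to the derivation of $\Free^{\downarrow}(E)$ extending $\delta|_E$, which is combinatorially the very same formula of Theorem \ref{bipoissonresolutionthm}, now read in the larger graph class. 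This gives $\Free^{\propp}_{\dioperad}(\Bipoisson_\infty)=(\Free^{\downarrow}(E),\delta)$, and likewise $\Free^{\properad}_{\dioperad}(\Bipoisson_\infty)=(\Free^{\downarrow}_c(E),\delta)$.

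It then remains to show that the quasi-isomorphism $\phi\colon\Bipoisson_\infty\isoto\Bipoisson$ of dg dioperads obtained in \S\ref{bipoissonresolution} stays a quasi-isomorphism after applying $\Free^{\propp}_{\dioperad}$, and I would do this one functor at a time. The step involving $\Free^{\properad}_{\dioperad}$ is the only delicate one, since this functor is not exact in general (Proposition 45 of \cite{Merkulov2007}). It is rescued by the fact that $\Bipoisson$ is given by a distributive law, which was already recorded in the proof of Koszulness: $\Bipoisson_U=\Lieone$ and $\Bipoisson_D=\Lietwo$, and Gan's theorem gives $\widetilde{\Bipoisson}_U\Box(\widetilde{\Bipoisson}_D)^{\op}=\Bipoisson$. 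Hence Proposition 48 of \cite{Merkulov2007} applies: $\Bipoisson_\infty$ being a quasi-free resolution of the distributive-law dioperad $\Bipoisson$, the morphism $\Free^{\properad}_{\dioperad}(\phi)\colon(\Free^{\downarrow}_c(E),\delta)\to\Free^{\downarrow}_c(M)/(R)$ is again a quasi-isomorphism.

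Finally, $\Free^{\propp}_{\properad}$ is exact (\S 7.4 of \cite{Vallette2007b}) and therefore preserves quasi-isomorphisms, so $\Free^{\propp}_{\properad}(\Free^{\properad}_{\dioperad}(\phi))=\Free^{\propp}_{\dioperad}(\phi)\colon(\Free^{\downarrow}(E),\delta)\to\Free^{\downarrow}(M)/(R)$ is a quasi-isomorphism of dg props, i.e.~$(\Free^{\downarrow}(E),\delta)$ is a quasi-free resolution of $\Free^{\downarrow}(M)/(R)$; it is moreover minimal, since $\delta$ carries each generator of $E$ to a sum of two-vertex graphs. The main obstacle is thus concentrated entirely in the single step of pushing the dioperadic resolution forward along the non-exact functor $\Free^{\properad}_{\dioperad}$; the argument hinges on the observation that the dioperadic relations of $\Bipoisson$ constitute a distributive law, while the remaining steps are routine manipulations of the presentations and of (co)derivations of free $\Gfrak^*$-algebras.
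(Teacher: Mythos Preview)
Your argument is correct and follows essentially the same route as the paper: the proposition is stated there as an immediate consequence of the preceding discussion, which records exactly the ingredients you use---the factorization $\Free^{\propp}_{\dioperad}=\Free^{\propp}_{\properad}\circ\Free^{\properad}_{\dioperad}$, the behavior of these functors on quadratic presentations, Proposition~48 of \cite{Merkulov2007} applied via the distributive-law structure of $\Bipoisson$, and the exactness of $\Free^{\propp}_{\properad}$ from \cite{Vallette2007b}. Your write-up is in fact more explicit than the paper's, which leaves the proposition without a separate proof environment.
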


We will use the same notation for $\Bipoisson$ when considering it as prop.

\subsection{From props to wheeled props}
\label{propstowheeledprops}

There exist another pair of adjoint functors $\Ucal^{\propp}_{\wheeledpropp}$ and $\Free^{\wheeledpropp}_{\propp}$ between props and wheeled props. 
Unfortunately the latter functor is not exact; it has been shown that when applying $\Free^{\wheeledpropp}_{\propp}$ to the propic resolution of $\Poisson$, new cohomology classes arise, Remark 4.2.4 of  \cite{Merkulov2008}. In the same paper it was shown though, that a minimal quasi free wheeled propic resolution exists, Theorem 4.5.1, but neither the differential nor the $\sbimodule$ by which it is generated need necessarily be directly obtained from the propic resolution. The explicit calculation of the wheeled resolution is a highly non-trivial problem which has not yet been accomplished. Since $\Poisson_\infty$ is present in $\Bipoisson_\infty$ as a subcomplex, consider e.g.~all generators with only white operations, at least the same difficulties arise when trying to extend the propic resolution of $\Bipoisson$. 
\section{Prop profiles III: Geometrical interpretation}
\label{geominterpretation}

In this section we first translate representations of $\Bipoisson_\infty$ into polyvector fields. We then propose a definition of bi-Hamiltonian structures on formal graded manifolds. Finally we give an interpretation of such structures as a family of brackets comprising an $\ELL^2_\infty$ algebra on the structure sheaf of the manifold.

\subsection{Representations of $\Bipoisson_\infty$ as polyvector fields}
\label{repsofbipoisson}

A representation of $\Bipoisson_\infty$ in a dg vector space $(V,d)$ is a family of degree zero linear maps
\[
 \{ \leftsub{k}{\mu}^n_m\colon V^{\odot n}\to V^{\wedge m}[2-m]\}_{\substack{m,n\geq 1 \\ m+n\geq 3\\ 0\leq k \leq m-1}}
\]
satisfying certain quadratic relations. We set $\leftsub{0}{\mu}^1_1:=-d$ and note that $|\leftsub{0}{\mu}^1_1|=1=2-1$. We construct polyvector fields on the formal graded manifold associated to $V$ from a representation of $\Bipoisson_\infty$ as follows. For $0\leq k\leq i-1$ let
\[
 \leftsub{k}{\G}^i_{j}:=\frac{1}{i!j!}\leftsub{k}{\G}^{a_1\dotsb a_i}_{b_1 \dots b_j}t^{b_1}\dotsb t^{b_j}\psi_{a_1}\dotsb\psi_{a_i}.
\]
Here the elements $\leftsub{k}{\Gamma}^{a_1\dotsb a_i}_{b_1 \dots b_j}\in\K$ are given by
\begin{equation}
\label{gammadef}
 \leftsub{k}{\mu}^j_i(e_{b_1}\odot \dotsb \odot e_{b_j})=\leftsub{k}{\G}^{a_1\dotsb a_i}_{b_1 \dots b_j}e_{a_1}\wedge\dotsb\wedge e_{a_i}.
\end{equation}

To assemble these polyvector fields into a single entity we introduce a formal parameter $\hslash$ of degree zero; we define an element of $\G\in\pvfsh$ as follows
\[
\G:=\sum_{k\geq 0}\leftsub{k}{\G}\hslash^k, \bhs\bhs \text{where} \shs \leftsub{k}{\G}:=\sum_{\substack{i\geq k+1\\j \geq 1 
}}\leftsub{k}{\G}^i_j.
\]
The role of the formal parameter $\hslash$ is to distinguish polyvector fields of the same weight from each other. Note that the part $\leftsub{k}{\G}$ comes from exactly those maps $\leftsub{k}{\mu}^n_m$ which are obtained from basis elements of $\Bipoisson_\infty$ with $k$ black operations, cf.~Theorem \ref{bipoissonresolutionthm}. We let $[\_,\_]_{\tilde{S}_{\hslash}}$ denote the  linearization in $\hslash$ of the Schouten bracket.

Note also that $\G$ satisfies $\leftsub{k}{\G}\in\wedge^{\bullet\geq k+1}\Tcal_V$. In fact it is easy to see that the elements with this property form a Lie subalgebra $\gfrak_V$ of $\pvfsh$.

Conversely, to an element $\G\in\gfrak_V$ one can by reversing the above process associate a family of maps $\{ \leftsub{k}{\mu}^n_m\}$. 

\begin{prop}
\label{bipoissonpolyvectorprop}
 A family of maps 
 \[
 \{\leftsub{k}{\mu}^n_m\colon V^{\odot n}\to V^{\wedge m}\}_{\substack{m,n\geq 1 \\ 0\leq k \leq m-1}}
 \]
 is a representation of $\Bipoisson_\infty$ in $V$ if and only if the corresponding polyvector field 
 \[
 \G=\sum_{k\geq 0}\leftsub{k}{\G}\hslash^k\in\gfrak_V
 \]
 satisfies the properties 
\begin{enumerate}
\item $|\G|=2$,
\item $[\G,\G]_{\tilde{S}}=0$,
\item $\G|_0=0$.
\end{enumerate}
\end{prop}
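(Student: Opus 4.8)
The plan is to translate the defining quadratic relations of $\Bipoisson_\infty$, which are encoded by the differential $\delta$ of Theorem \ref{bipoissonresolutionthm}, into the single equation $[\G,\G]_{\tilde{S}}=0$ together with the degree and vanishing conditions, and conversely. The first two properties are the easy bookkeeping part: a representation consists by definition of degree zero maps $\leftsub{k}{\mu}^n_m\colon V^{\odot n}\to V^{\wedge m}[2-m]$, and by the same computation as in the proof of Theorem \ref{poissonLinftythm} one has $|\leftsub{k}{\G}^i_j|=|\leftsub{k}{\mu}^j_i|=2$ for every summand, so $|\G|=2$ is equivalent to the maps living in the prescribed degree. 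Property (iii), $\G|_0=0$, simply records that $\G$ has no constant term in the $t$-variables, which corresponds to the fact that the generators of $\Bipoisson_\infty$ all have at least one input; this is automatic from the index ranges $n\geq 1$, $m+n\geq 3$ in the $\s$-bimodule $E$. So the real content is property (ii).

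Next I would set up the dictionary between the cobar differential and the Schouten bracket. A representation $\rho\colon\Bipoisson_\infty=\Omega(\Bipoisson^{\antishriek})\to\End_V$ of the cobar construction is, by the universal property of the free dioperad and the standard twisting-morphism/Maurer--Cartan formalism, the same datum as a degree one element of a convolution dg Lie algebra whose Maurer--Cartan equation reads $d_{\End}\circ\rho + \tfrac12[\rho,\rho]=0$; unwinding this, the relations imposed on the family $\{\leftsub{k}{\mu}^n_m\}$ are exactly the images under $\rho$ of $\delta$ applied to the generators $\boxcorolla{1}{n}{i}{1}{m}$. The key computation is then to recognize that, under the correspondence \eqref{gammadef} sending $\leftsub{k}{\mu}^j_i$ to the polyvector field $\leftsub{k}{\G}^i_j$, the sum over two-level splittings with the signs $(-1)^{\sgn(\sigma)+j(m-j)}$ appearing in $\delta$ is precisely the coordinate expansion of the operator $\G \mapsto \G\bullet\G$, i.e.\ of $\tfrac12[\G,\G]_{\tilde{S}}$, and that the $\hslash$-grading by the number $k$ of black operations matches the linearization $[\_,\_]_{\tilde{S}_\hslash}$ of the bracket. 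Thus the collection of all relations $\delta(\text{generator})\mapsto 0$ assembles into the single identity $[\G,\G]_{\tilde{S}}=0$, with the differential $d=-\leftsub{0}{\mu}^1_1$ being the part of the Schouten square involving the weight-one component. Concretely this is the same verification as in Theorem \ref{poissonLinftythm} and in \cite{Merkulov2006}, extended to keep track of the extra index $k$; I would simply cite that it is a tedious but straightforward comparison of the two sign conventions, exactly as in the proof of Theorem \ref{poissonLinftythm}.

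For the converse, given $\G\in\gfrak_V$ with the three properties, one reads off the maps $\leftsub{k}{\mu}^n_m$ from the Taylor coefficients via \eqref{gammadef} (the condition $\leftsub{k}{\G}\in\wedge^{\bullet\geq k+1}\Tcal_V$ guaranteeing that the index ranges $0\leq k\leq m-1$ are respected, which is why $\gfrak_V$ is the right subalgebra), and the same dictionary shows that $[\G,\G]_{\tilde{S}}=0$ forces the defining quadratic relations of $\Bipoisson_\infty$, so that $\rho$ is indeed a dioperad homomorphism. I expect the main obstacle to be purely notational: controlling the signs in the identification of $\delta$ with the Schouten square, in particular checking that the shuffle signs $\sgn(\sigma),\sgn(\tau)$ together with $j(m-j)$ and the Koszul--Quillen signs from the suspension $[m-2]$ combine into exactly the signs produced by $A\bullet B=\frac{\p A}{\p\psi_a}\frac{\p B}{\p t^a}$ and its symmetrization. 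This is the content hidden behind the phrase ``tedious but straightforward''; there is no conceptual difficulty beyond that, since Koszulness (hence $\Omega(\Bipoisson^{\antishriek})\isoto\Bipoisson$) and the explicit differential are already in hand from Theorem \ref{bipoissonresolutionthm}.
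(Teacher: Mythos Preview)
Your proposal is correct and follows essentially the same route as the paper: reduce to checking that the degree and vanishing conditions are bookkeeping, then identify the image under $\rho$ of $\delta$ applied to each generator with the corresponding coefficient of $[\G,\G]_{\tilde{S}_\hslash}$, declaring the sign match a tedious but straightforward verification. The only cosmetic difference is that you phrase the translation via the convolution/Maurer--Cartan formalism, which the paper reserves for the subsequent ``conceptual interpretation'' in \S\ref{repsofbipoisson}+1, while its proof of this proposition writes the same content as an explicit commutative diagram and a coefficient identity.
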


\begin{proof}
Consider a representation $\rho\colon \Bipoisson_\infty \to \End_V$. Since the dioperad $\Bipoisson_\infty\newline=\Omega((\Bipoisson)^{\antishriek})$ is quasi-free, the differential $\delta$ is fully determined by the restriction to the weight one part, i.e.~graphs with one vertex. That $\rho$ is a representation of dg dioperads is thus equivalent to that the following diagram commutes for all $m,n\geq 1$ with $m+n\geq 3$
\[
\xymatrix{
\Sigma^{-1}\ol{\Bipoisson^{\antishriek}}(m,n) \ar[r]^{\rho} \ar[d]_{\delta}       & \Hom(V^{\otimes n}, V^{\otimes m}) \ar[d]^{d} \\
\Free_{(2)}(\Sigma^{-1}\ol{\Bipoisson^{\antishriek}})(m,n) \ar[r]^{\rho} &  \Hom(V^{\otimes n}, V^{\otimes m}).
}
\]
Depicting the differential $d$ by $\times$ and the image under $\rho$ of a decorated graph by the graph itself this is equivalent to that for all $m,n\in\N$ and all $0\leq i \leq m-1$
\begin{multline}
\label{corollaeqn}
\sum_{
\substack{1 \leq k \leq n \\ 0 \leq j \leq m-1 \\ i_1+i_2=i \\ (k,n-k)\text{-shuffles} \shs \tau \\ (j,m-j)\text{-shuffles} \shs \sigma }
}
\hspace{-15pt}(-1)^{\sgn(\sigma)+j(m-j)}\hspace{5pt}
\mirroredsplitboxcorolla{ \sigma(j+1) }{ \sigma(m) }{ \tau(k+1) }{ \tau(n)}{\sigma(1) }{ \sigma(j) }{ \tau(1) }{ \tau(k) }=\\
\sum_{(1,n-1)\text{-shuffles} \shs \tau }
\mirroredboxcorolladiffdown{1}{m}{i}{1}{n}+
\sum_{(m-1,1)\text{-shuffles} \shs \sigma }(-1)^{\sgn(\sigma)+(m-1)}
\mirroredboxcorolladiffup{1}{m}{i}{2}{n}.
\end{multline}
This condition translates into a sequence of quadratic relations on the family of maps $\{\leftsub{k}{\mu}^n_m\colon V^{\odot n}\to V^{\wedge m}\}$ corresponding to $\rho$.

Identifying the differential $d$ and the $\leftsub{k}{\mu}^n_m$ decorating the vertices with the appropriate $\leftsub{k}{\G}^m_n$ of \eqref{gammadef} we first note that $|\leftsub{k}{\G}^m_{n}|=2$ is equivalent to $|\leftsub{k}{\mu}^n_m|=2-m$. That $\G$ satisfies $\G|_0=0$ is immediate since $n\geq 1$ for the maps $\leftsub{k}{\mu}^n_m$. Now consider the expression $[\G,\G]_{S_{\hslash}}$. It is zero precisely when the coefficients of all monomials 
\[
 t^{b_1}\dots t^{b_{n}} \psi_{a_1} \dots \psi_{a_{m}} \hslash^{k}.
\]
in its expansion vanish. The condition $[\G,\G]_{S_{\hslash}}$=0 is thus equivalent to that for all $m,n\geq 1$, all $0\leq i\leq m-1$, all $a_1, \dotsc, a_{m}$, and all $b_1, \dotsc, b_{n}$ the following equality holds
\begin{equation}
\label{gammaeqn}
\sum_{\substack{
0\leq j \leq m-1 \\ 1 \leq k \leq n \\ i_1+i_2=i \\ 
(j,m-j)\text{-shuffles} \shs \sigma \\ (k,n-k)\text{-shuffles} \shs \tau
}}
  (-1)^{\sgn(\sigma)+j(m-j)}
 \frac{\leftsub{i_1}{\G}^{a_{\sigma(1)} \dots a_{\sigma(j)} e}_{b_{\tau(1)} \dots b_{\tau(k)}}}{j!k!}
 \frac{\leftsub{i_2}{\G}^{a_{\sigma(j+1)} \dots a_{\sigma(m)}}_{e b_{\tau(k+1)} \dots b_{\tau(n)}}}{(m-j)!(n-k)!}=0.
\end{equation}
It is straightforward to check that \eqref{gammaeqn} is satisfied if and only if \eqref{corollaeqn} is.
\end{proof}

Theorem \ref{bipoissoninV} is now an immediate consequence of the preceding proposition which also prompts us to make the following definition.

\begin{defn*}
An \emph{extended bi-Hamiltonian structure} on a formal graded manifold $V$ is an element $\G\in\gfrak_V$ of degree two satisfying $[\G,\G]_{\tilde{S}_\hslash}=0$.
\end{defn*}

With this definition Theorem \ref{bipoissoninV} can be reformulated as follows.

\begin{thm}
\label{mainthm}
There is a one-to-one correspondence between representations of $\Bipoisson_\infty$ in a dg vector space $V$ and pointed extended bi-Hamiltonian structures on the formal graded manifold associated to $V$.
\end{thm}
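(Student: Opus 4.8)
The plan is to obtain Theorem~\ref{mainthm} as a direct consequence of Proposition~\ref{bipoissonpolyvectorprop}, the remaining content being the identification of the data of a representation in a \emph{differential graded} vector space with the data of a pointed extended bi-Hamiltonian structure.

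First I would unwind the notion of a representation of $\Bipoisson_\infty$ in a dg vector space $(V,d)$. Since $\Bipoisson_\infty=\Omega(\Bipoisson^{\antishriek})=\Free(\Sigma\ol{\Bipoisson^{\antishriek}})$ is the quasi-free dioperad on the \sbimodule{} $E$ of Theorem~\ref{bipoissonresolutionthm}, a morphism of dg dioperads $\rho\colon\Bipoisson_\infty\to\End_V$ is uniquely determined by its restriction to $E$, i.e.~by a family of degree zero maps $\{\leftsub{k}{\mu}^n_m\colon V^{\odot n}\to V^{\wedge m}[2-m]\}$ with $m,n\geq1$, $m+n\geq3$, $0\leq k\leq m-1$; the endomorphism dioperad $\End_V$ carries the differential induced by $d$, so that $d$ itself is part of the datum of the representation exactly as in Theorem~\ref{merkulovmainthm}. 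Putting $\leftsub{0}{\mu}^1_1:=-d$, the requirement that $\rho$ intertwine the differentials --- equivalently, commutativity of the square displayed in the proof of Proposition~\ref{bipoissonpolyvectorprop} for all $m,n$ --- is precisely the quadratic relation~\eqref{corollaeqn}.

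Next I would invoke the explicit dictionary of~\S\ref{repsofbipoisson}: from $\{\leftsub{k}{\mu}^n_m\}$ one assembles $\G=\sum_{k\geq0}\leftsub{k}{\G}\hslash^k\in\gfrak_V\subset\pvfsh$, and conversely every $\G\in\gfrak_V$ arises in this way from a unique such family, the two passages being mutually inverse. By Proposition~\ref{bipoissonpolyvectorprop} the family $\{\leftsub{k}{\mu}^n_m\}$ is a representation of $\Bipoisson_\infty$ if and only if $\G$ has degree two, satisfies $[\G,\G]_{\tilde{S}_\hslash}=0$, and satisfies $\G|_0=0$; the last condition holds automatically because every generator of $\Bipoisson_\infty$ has at least one input, while the first two are exactly the definition, given in~\S\ref{geominterpretation}, of a pointed extended bi-Hamiltonian structure on the formal graded manifold associated to $V$. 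Restricting the mutually inverse bijection above to the subsets cut out by these conditions yields the asserted one-to-one correspondence; Theorem~\ref{bipoissoninV} is the same statement phrased without the word ``bi-Hamiltonian''.

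The only point demanding care --- and the main, if modest, obstacle --- is the bookkeeping around the differential: one must verify that incorporating $\leftsub{0}{\mu}^1_1=-d$ into the family reproduces exactly the $\times$-decorated terms appearing on the right-hand side of~\eqref{corollaeqn}, and that these correspond under the dictionary to the terms of $[\G,\G]_{\tilde{S}_\hslash}=0$, written out coefficient-wise in~\eqref{gammaeqn}, in which one of the two factors is $\leftsub{0}{\G}^1_1$. This is the standard correspondence between homological vector fields and differentials already used in~\S\ref{poissonpropprofile} for $\Poisson$; once it is in place, Theorem~\ref{mainthm} follows formally from Proposition~\ref{bipoissonpolyvectorprop}.
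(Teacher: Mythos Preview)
Your proposal is correct and follows essentially the same approach as the paper: the theorem is obtained as an immediate consequence of Proposition~\ref{bipoissonpolyvectorprop}, together with the definition of a pointed extended bi-Hamiltonian structure given just before the theorem. The paper states this in one sentence, while you spell out the bookkeeping around the differential $d=-\leftsub{0}{\mu}^1_1$ and the automatic vanishing $\G|_0=0$; these points are handled in the paper within the proof of Proposition~\ref{bipoissonpolyvectorprop} itself (the $\times$-decorated terms in~\eqref{corollaeqn} and the remark that $n\geq1$), so nothing additional is needed.
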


Regarding non-pointed bi-Hamiltonian structures cf.~Remark \ref{nonpointedrem}.

\subsection{A conceptual interpretation}
 To make the correspondence between representations of $\Bipoisson_\infty$ and polyvector fields in $\gfrak_V$ clearer we can use a result of Merkulov and Vallette. In \cite{Merkulov2007} they showed, Corollary 26, that the family $\Hom^{\s}(\Ccal,\Pcal)=\{\Hom^{\s}(\Ccal,\Pcal)(m,n)\}_{m,n\in\N}$ of all $\sbimodule$ graded homomorphisms from a coproperad $\Ccal$ to a properad $\Pcal$ is naturally a dg Lie algebra with
\[
 [\alpha,\beta]:=\mu_\Pcal \circ ((\alpha^{v_1},\beta^{v_2})-(-1)^{\alpha\beta}(\beta^{v_1},\alpha^{v_2}))\circ\Delta,
\]
where $v_1$ and $v_2$ are the lower and upper vertices, respectively, of the decorated graphs in the image of $\Delta$, and
\[
 \delta(\alpha):=\delta_\Pcal \circ \alpha - (-1)^{\alpha} \alpha \circ \delta_\Ccal.
\]
In the case when $\Ccal=\Sigma\ol{\Qcal^{\antishriek}}$, $\Qcal$ is Koszul, and $\Pcal=\End_V$, the Maurer-Cartan elements of $\Hom^{\s}(\Ccal,\Pcal)$, i.e.~the elements $\gamma$ which satisfy $\delta(\gamma)+\half[\gamma,\gamma]=0$, are precisely the representations of $\Qcal_\infty$ in $V$. 

With $\Ccal=\Bipoisson^{\antishriek}$ and $\Pcal=\End_V$ we have
\[
 \Hom^{\s}(\Ccal,\Pcal)(m,n)=\hspace{-4.9pt}\bigoplus_{\substack{m,n\geq 1\\ m+n\geq 3}}\hspace{-4.9pt} \underbrace{\odot^n V^* \otimes \wedge^m V[2-m]\oplus\dotsb\oplus \odot^n V^* \otimes \wedge^m V[2-m]}_{\text{$m$ copies}}\iso\tilde{\gfrak}_V.
\]
Here $\tilde{\gfrak}_{V}$ is the Lie subalgebra of $\gfrak_V$ consisting of all elements 
\[
\G=\sum_{\substack{i,j\geq 1\\0 \leq k \leq i-1}}\leftsub{k}{\G}^i_{j}:=\leftsub{k}{\G}^{a_1\dotsb a_i}_{b_1 \dots b_j}t^{b_1}\dotsb t^{b_j}\psi_{a_1}\dotsb\psi_{a_i}\hslash^k 
\]
such that the part $\leftsub{0}{\G}^{a_1}_{b_1} t^{b_1} \psi_{a_1}$ is zero, and the isomorphism is the one given in \S \ref{repsofbipoisson}. The differential $d$ of $V$ translates to a vector field $D=D^a_b t^b \psi_a$, for $d(e_b)=D^a_b e_a$, which in turns yields a differential $\delta_\hslash=[D,\_]_{\tilde{S}_\hslash}$ on $\tilde{\gfrak}_{V}$. Using Proposition \ref{bipoissonpolyvectorprop} it is not hard to show that the above isomorphism is an isomorphism of dg Lie algebras. From an element $\tilde{\G}\in\tilde{\gfrak}_V$ we obtain an element $\G=D+\tilde{\G}\in\gfrak_V$ and we have that $\tilde{\G}$ is a Maurer-Cartan element if and only if $[\G,\G]_{\tilde{S}_\hslash}=0$. 

With $\Ccal$ and $\Pcal$ as in the previous paragraph, the set of morphisms $\Hom^{\s}(\Ccal,\Pcal)$ is isomorphic to the underlying space of the deformation complex of $\Bipoisson$ algebras. Thus we see that the deformation theory of $\Bipoisson$ algebras is directly related to the Lie algebra $\gfrak_V$. See \cite{Merkulov2007} for more on the deformation complex.

\subsection{Representations of $\Bipoisson_\infty$ in non-graded vector spaces}

If the vector space $V$ is concentrated in degree zero then the maps $\leftsub{k}{\mu}^n_m$ corresponding to a representation of $\Bipoisson_\infty$ vanish unless $m=2$. Thus $\G=\leftsub{0}{\G}+\leftsub{1}{\G}\hslash$ and $\leftsub{0}{\G}$ and $\leftsub{1}{\G}$ are bivector fields. The condition $[\G,\G]_{\tilde{S}_{\hslash}}=0$ is therefore equivalent to
\[
 [\leftsub{0}{\G},\leftsub{0}{\G}]_{\tilde{S}}+
 ([\leftsub{0}{\G},\leftsub{1}{\G}]_{\tilde{S}}+[\leftsub{1}{\G},\leftsub{0}{\G}]_{\tilde{S}})\hslash+
 [\leftsub{1}{\G},\leftsub{1}{\G}]_{\tilde{S}}\hslash^2=0
\]
and we observe that representations of $\Bipoisson_\infty$ in $V$ are in one-to-one correspondence with classical bi-Hamiltonian structures on the formal manifold associated to $V$. In particular this proves Theorem \ref{bipoissoninRn}.

\subsection{The family of brackets of an extended bi-Hamiltonian structure}
\label{abiggerfamily}

To an element $\G=\sum_{k\geq 0}\leftsub{k}{\G}\hslash^k\in\gfrak_V$ with $\leftsub{k}{\G}=\sum_{i\geq k+1}\leftsub{k}{\G}_i$ and
\[
 \leftsub{k}{\G}_i:=\leftsub{k}{\G}^{a_1\dots a_{i}}(t) \psi_{a_1}\dotsb\psi_{a_i}
\]
we associate a family of brackets as follows.
For $1\leq k \leq n$ we define an $n$-ary bracket $\leftsub{k}{L}_n\colon\otimes^n\Ocal_V\to\Ocal_V$  by
\begin{align*}
 \leftsub{k}{L}_n(f_1,\dotsc,f_n)_i:&=\leftsub{k-1}{\G}_n d f_1 \wedge \dotsb \wedge d f_n\\
&=(-1)^{\epsilon}\leftsub{k-1}{\G}^{a_1\dots a_{n}}(t)(\p_{a_1}f_1)\dotsb(\p_{a_n}f_n).
\end{align*}
Here the sign $(-1)^\epsilon$ is given by
\[
 \epsilon= a_n(f_1+\dotsb+f_{n-1}+n-1)+(a_{n-1})(f_1+\dotsb+f_{n-2}+n-2)+\dotsb+a_2(f_1+1).
\]

\begin{thm}
 The brackets $\leftsub{k}{L}_n$ associated to a polyvector field $\G\in\gfrak_V$ as above satisfy the Leibniz property in each argument, i.e.
\begin{multline*}
 \leftsub{k}{L}_n(f_1,\dotsc, f_{j-1},gh ,f_j,\dotsc f_n)=\\(-1)g \leftsub{k}{L}_n(f_1,\dotsc, f_{j-1},h ,f_j,\dotsc f_n)+(-1)\leftsub{k}{L}_n(f_1,\dotsc, f_{j-1},g ,f_j,\dotsc f_n)h.
\end{multline*}
Moreover, the family of brackets  $\{\leftsub{k}{L}_n \}_{i\geq 1, 1\leq k \leq n}$ gives $\Ocal_V$ the structure of $\ELL^2_\infty$ algebra if and only if $\G$ is of degree two and satisfies $[\G,\G]_{\tilde{S}_\hslash}=0$.
\end{thm}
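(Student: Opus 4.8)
The strategy is to follow, \emph{mutatis mutandis}, the proof of Theorem~\ref{poissonLinftythm}, of which the present statement is the $\hslash$-graded analogue; the only new feature is that the grading by powers of $\hslash$ splits the single $\ELL_\infty$-type relation of that theorem into the family of $\ELL^2_\infty$ relations indexed by $k$. As there, the argument breaks into four parts: the Leibniz property, the graded skew-symmetry, the degree count, and the equivalence between $[\G,\G]_{\tilde{S}_\hslash}=0$ and the $\ELL^2_\infty$ axioms.

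The Leibniz property and the skew-symmetry need no new idea. With all arguments but one fixed, $\leftsub{k}{L}_n(f_1,\dots,f_{j-1},\,\_\,,f_j,\dots,f_n)$ is, up to the sign $(-1)^\epsilon$, the contraction of the $n$-vector field $\leftsub{k-1}{\G}_n\in\wedge^n\Tcal_V$ with exact one-forms in every slot but one, hence the action on $\Ocal_V$ of a single vector field and so a graded derivation; and the graded skew-symmetry in the $n$ arguments comes from the fact that the symbols $\psi_a$ anticommute with Koszul signs, the prefactor $(-1)^\epsilon$ being exactly what turns the contraction into a skew-symmetric multilinear map, just as in Theorem~\ref{poissonLinftythm}. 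For the degrees, each $\p_a$ has degree $|\psi_a|-1$ and $n$ of them replace the $n$ removed symbols $\psi_a$, so $|\leftsub{k}{L}_n|=|\leftsub{k-1}{\G}_n|-n$; since $\hslash$ is of degree zero this equals $2-n$ for all admissible $(k,n)$ if and only if $|\G|=2$. The standing hypothesis $\G\in\gfrak_V$, i.e.\ $\leftsub{k}{\G}\in\wedge^{\bullet\geq k+1}\Tcal_V$, is precisely what makes $\leftsub{k}{L}_n$ vanish unless $1\leq k\leq n$, so the family is indexed exactly as an $\ELL^2_\infty$ algebra requires, with $\leftsub{1}{L}_1$ --- the action of the vector field $\leftsub{0}{\G}_1$ --- playing the role of the differential on $\Ocal_V$.

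The content of the theorem is the equivalence of $[\G,\G]_{\tilde{S}_\hslash}=0$ with the $\ELL^2_\infty$ relations. Expanding $[\G,\G]_{\tilde{S}_\hslash}$ by its power of $\hslash$ and by the monomials $t^{b_1}\dots t^{b_n}\psi_{a_1}\dots\psi_{a_m}$ gives, since the $\hslash$-linearized Schouten bracket is $\K\llbracket\hslash\rrbracket$-bilinear, that the coefficient of $\hslash^i$ is $\sum_{i_1+i_2=i}[\leftsub{i_1}{\G},\leftsub{i_2}{\G}]_{\tilde{S}}$; vanishing of all of these is the system of equations~\eqref{gammaeqn}. I would contract this system with $df_1\wedge\dots\wedge df_n$. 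Unwinding $A\bullet B=\frac{\p A}{\p\psi_a}\frac{\p B}{\p t^a}$ and the definition of the brackets, the contraction of $[\leftsub{i_1}{\G}_r,\leftsub{i_2}{\G}_s]_{\tilde{S}}$ against the $df$'s produces, after distributing the derivative $\frac{\p}{\p t^a}$ through the remaining factors by the Leibniz rule, the nested compositions $\leftsub{i_1+1}{L}_r(\leftsub{i_2+1}{L}_s(f_{\sigma(1)},\dots),\dots)$ summed over the appropriate unshuffles; as in the $\ELL_\infty$ case the terms in which $\frac{\p}{\p t^a}$ hits one of the already-present first derivatives of the $f_i$, producing second derivatives, cancel among themselves by the skew-symmetry of the brackets, so only the first-order terms survive. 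The $\hslash$-splitting $i_1+i_2=i$ becomes the constraint $i+j=k$ of the $\ELL^2_\infty$ relation, up to the shift by one between bracket-indices and $\G$-pieces, and one checks that the shuffle signs $(-1)^{\sgn(\sigma)+j(m-j)}$ of~\eqref{gammaeqn} match the signs $\epsilon(\sigma)\sgn(\sigma)(-1)^{r(s-1)}$ of the relation, while the boundary cases $r=1$ and $s=1$, which come from $\leftsub{0}{\G}_1$, reproduce respectively the terms $\leftsub{1}{L}_1$ composed with $\leftsub{k}{L}_n$ and $\leftsub{k}{L}_n$ applied to $\leftsub{1}{L}_1$ of an argument. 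Since the polyvector-field side and the bracket side are then literally the same system of linear equations, the equivalence holds in both directions.

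I expect the main obstacle to be nothing but sign and index bookkeeping: carrying the Koszul--Quillen signs through the contraction so that~\eqref{gammaeqn} and the $\ELL^2_\infty$ relation line up, verifying the cancellation of the second-derivative terms, and confirming that the index shift $\leftsub{k}{L}_n\leftrightarrow\leftsub{k-1}{\G}_n$ places the differential $\leftsub{1}{L}_1$ correctly among the boundary terms. This is the $\ELL^2$-analogue of the computation dismissed as ``tedious but straightforward'' in Theorem~\ref{poissonLinftythm}, and introduces no new conceptual difficulty; in particular the result is consistent with, and could be cross-checked against, Proposition~\ref{bipoissonpolyvectorprop} together with the description of $(\Lietwo)_\infty$-algebras in \S\ref{lietworesolution}.
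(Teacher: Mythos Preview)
Your proposal is correct and takes essentially the same approach as the paper: the paper's entire proof is the single sentence ``The proof is completely analogous to that of Theorem~\ref{poissonLinftythm},'' and you do precisely this, spelling out how the analogy goes (Leibniz from the vector-field nature of each slot, the degree count, and the $\hslash$-grading splitting the $\ELL_\infty$ relation into the $\ELL^2_\infty$ family). Your write-up is in fact more detailed than what the paper provides.
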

\begin{proof}
The proof is completely analogous to that of Theorem \ref{poissonLinftythm}.
\end{proof}

This leads to another definition of extended bi-Hamiltonian structures on formal graded manifolds, which by the preceding theorem is equivalent to the one we gave in \S \ref{repsofbipoisson}.

\begin{defn*}
An \emph{extended bi-Hamiltonian structure} on a formal graded manifold $V$ is an $\ELL^2_\infty$ algebra $\{\leftsub{k}{L}_n \}_{n\geq 1, 1\leq k \leq n}$ on $\Ocal_V$ such that the brackets $\leftsub{k}{L}_n$ have the Leibniz property in each argument.
\end{defn*}
\appendix

\section{Details on $\Gfrak^*$-algebras}

\subsection{Operads and $\Gfrak^{\downarrow_1}_c$-algebras}
\label{galgvsoperad}

An operad is often defined as the data
\[
(\Pcal=\{\Pcal(n)\}_{n\in\N}, \{\circ_i^{n_1,n_2}\}_{\substack{n_1,n_2\in\N \\ 1\leq i \leq n_1}},\bbone),
\]
where $\Pcal$ is an $\s$-module, $\bbone\in\Pcal(1)$, and the maps
\[
 \circ^{n_1,n_2}_i\colon \Pcal(n_1)\otimes\Pcal(n_2)\to \Pcal(n_1+n_2-1)
\]
satisfy certain associativity, $\s$-equivariance, and unit axioms, see e.g.~\cite{Markl2002}.

\begin{prop}
 The above definition of an operad is equivalent to the definition of a $\Gfrak^{\downarrow_1}_c$-algebra.
\end{prop}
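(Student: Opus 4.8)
The plan is to exhibit mutually inverse passages between the two packages of data and match the axioms. Recall first that every graph in $\Gfrak^{\downarrow_1}_c$ has a unique root, so it has exactly one global output edge; hence the composition product of a $\Gfrak^{\downarrow_1}_c$-algebra only ever sees decorations of the form $M(1,n)$, and, as in the remark following the definition of $\s$-modules, we write $\Pcal(n):=M(1,n)$ and treat $\Pcal$ as an $\s$-module. Given an operad in the classical sense, I would define the composition product on a decorated rooted tree $(G,[p_1\otimes\dotsb\otimes p_k])$ by picking any total order $v_{\pi(1)},\dotsc,v_{\pi(k)}$ of the vertices refining the ancestor order of the tree, and contracting edges one at a time from the leaves inward using the partial compositions $\circ_i$ (the index $i$ being read off from the local input labelling of the vertex at the lower end of the contracted edge together with the global leaf labelling). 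The classical \emph{sequential} and \emph{parallel} associativity axioms show this is independent of $\pi$, and $\s$-equivariance shows it is compatible with the $\s_n$-actions permuting leaf labels, so $\mu_G$ descends to the unordered tensor product $G\langle\Pcal\rangle$; one checks directly that the resulting $\mu$ is an $\s$-bimodule morphism and that $\eta(1):=\bbone$ is a unit in the sense of \S\ref{unitsandcounits}. The $\Gfrak^*$-associativity $\mu_G=\mu_{G/H}\circ\mu^G_H$ then holds because contracting a subtree $H$ first and then the rest is one of the admissible orders of contraction.

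Conversely, from a $\Gfrak^{\downarrow_1}_c$-algebra $(\Pcal,\mu,\eta)$ I would recover the partial compositions by restricting $\mu$ to the two-vertex rooted tree $T^{n_1,n_2}_i$ that has root vertex $v_1$ with $n_1$ inputs, a second vertex $v_2$ with $n_2$ inputs grafted onto the $i$-th input of $v_1$, and leaves labelled $1,\dotsc,n_1+n_2-1$ left to right (this is the rooted-tree specialisation of the graph $\twovertexgraph$): set
\[
p\circ_i^{n_1,n_2}q:=\mu_{T^{n_1,n_2}_i}(p\otimes q),\qquad \bbone:=\eta(1)\in\Pcal(1,1)=\Pcal(1).
\]
The $\s$-equivariance of $\circ_i$ is exactly the statement that $\mu$ is an $\s$-bimodule homomorphism, applied to the permutations of leaf labels of $T^{n_1,n_2}_i$; the unit axioms are the instance of \eqref{unitdef} for $T^{1,n}_i$ and $T^{n,1}_i$; and the two associativity axioms for $\circ_i$ are obtained by writing down the $\Gfrak^*$-associativity relation $\mu_G=\mu_{G/H}\circ\mu^G_H$ for the two shapes of three-vertex rooted trees (three vertices on a path, and two vertices grafted onto distinct inputs of a third) with $H$ a two-vertex admissible subtree. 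Finally one verifies that the two constructions are inverse to each other, which for the round trip starting from a $\Gfrak^{\downarrow_1}_c$-algebra amounts to the claim that $\mu_G$ on a general decorated rooted tree is determined by its values on two-vertex trees — this follows by induction on $|V_G|$ from the $\Gfrak^*$-associativity, taking $H$ to be a two-vertex subtree containing a leaf-most vertex.

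I expect the main obstacle to be the label bookkeeping rather than anything conceptual: a decorated tree carries a global leaf labelling together with the local edge labellings packaged into the decorations $M(E^{\outp}_v,E^{\inp}_v)$, whereas the classical $\circ_i$ records only an integer. Showing that the iterated-$\circ_i$ formula is well defined on the coinvariants defining $\bigotimes_{v\in V_G}$, and that it satisfies the associativity relation for \emph{all} admissible $H\subset G$ (not merely the two-vertex ones), requires carefully tracking how grafting renumbers leaves and where the $\s_n$-equivariance isomorphisms are inserted; the induction on $|V_G|$ reduces every such instance to the two three-vertex cases, but writing the reduction cleanly is the technical heart of the argument. None of the steps is deep, so I would present the two constructions, the axiom dictionary, and the inductive well-definedness check, and leave the purely combinatorial verification of signs-free relabellings to the reader.
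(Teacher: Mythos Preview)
Your proposal is correct and follows essentially the same approach as the paper: extract $\circ_i^{n_1,n_2}$ from $\mu$ on the canonical two-vertex rooted tree, deduce the classical axioms from $\mu_G=\mu_{G/H}\circ\mu^G_H$ on three-vertex trees and from $\s$-equivariance of $\mu$; conversely, build $\mu_G$ by iterated contraction along two-vertex admissible subgraphs, with well-definedness coming from operadic associativity and equivariance. The paper's proof is terser (it does not spell out the inverse-constructions check or the label bookkeeping you flag), but the logical skeleton is the same.
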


\begin{proof}
 Let $(\Pcal,\mu,\eta)$ be a $\Gfrak^{\downarrow_1}_c$-algebra. We can give $\Pcal$ an operad structure of the above type as follows. Let $G\in\Gfrak^{\downarrow_1}_c$ be the  two-vertex graph depicted in Figure \ref{twovertexgraph}.
\begin{figure}[h]
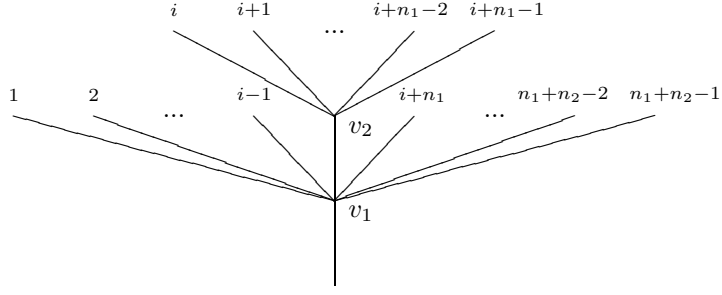

\[
 \twovertexgraph .
\]
\caption{\label{twovertexgraph} A two-vertex graph.}
\end{figure}
We then define $p_1 \circ_{i}^{n_1,n_2} p_2:=\mu_G(( p_1 \otimes_\s g_1)\otimes( p_2 \otimes_\s g_2))$, where $p_1$ and $p_2$ are decorating $v_1$ and $v_2$, respectively, and $g_1$ and $g_2$ are labelings satisfying
\[
 \begin{cases}
  g_1\circ\inp_G(1)=1,\dotsc, g_1\circ\inp_G(i-1)=i-1 \\
  g_2\circ\inp_G(i)=1,\dotsc, g_2\circ\inp_G(i+n_2-1)=n_2 \\
  g_1\circ\inp_G(i+n_2)=i+1,\dotsc, g_1\circ\inp_G(n_1+n_2-1)=n_1.
 \end{cases}
\]
The condition that $\mu_G=\mu_{G/H}\circ\mu^G_H$ for all pairs of a three vertex graph $G$ and a two vertex $\Gfrak^{\downarrow_1}_c$-admissible subgraph $H$ implies that the $\circ_i^{n_1,n_2}$ satisfy the associativity axioms of an operad. The  $\s$-equivariance axioms follow from the $\s$-equivariance of the $\mu_G$ as well as by the structure of decorated graphs. Defining $\bbone:=\eta(1)$, the unit axioms of an operad are immediate from those of the $\Gfrak^{\downarrow_1}_c$-algebra.

Conversely if an $\s$-bimodule has an operad structure then we can define a $\Gfrak^{\downarrow_1}_c$-algebra structure by letting $\mu_H$, for $H$ a two-vertex graph, be given by the appropriate $\circ_i^{n_1,n_2}$ as above. Then for a graph $G\in\Gfrak^{\downarrow_1}_c$, with $k=|V_G|>2$, we define
\[
\mu_G:=\mu_{(\dotsb(G/H_{1})/\dotsb/H_{k-1})}\circ\dotsb \circ \mu^{G/H_{1}}_{H_2} \circ \mu^G_{H_1},
\]
where $H_1,\dotsc H_{k-1}$ is an arbitrary sequence of two-vertex graphs such that $H_i$ is a $\Gfrak^{\downarrow_1}_c$-admissible subgraph of $(\dotsb(G/H_{1})/\dotsb/H_{i-1})$. That the maps $\mu_G$ are well-defined and satisfy $\mu_G=\mu_{G/H}\circ\mu^G_H$ is a consequence of the associativity and the $\s$-equivariance of the $\circ_i^{n_1,n_2}$. The unit is defined by $\eta(1):=\bbone$.
\end{proof}

Similarly, by considering appropriate two-level graphs, one can show that the definitions of $\Gfrak^\downarrow_{c,0}$-, $\Gfrak^\downarrow_c$-, and $\Gfrak^\downarrow$-algebras  correspond to the classical definitions of dioperads, properads, and props.

%

\subsection{Composition product of free $\Gfrak^*$-algebras }
\label{freegrafting}
We keep the notation of \S \ref{freeGalgs}. When describing the grafting of graphs we will denote $G(G_1,\dotsc G_k)$ by $\widetilde{G}$.

The vertices of the graph $\widetilde{G}$ are given by $V_{\widetilde{G}}:=V_{G_1}\sqcup\dotsb\sqcup V_{G_k}$, the internal edges by $E^{\internal}_{\widetilde{G}}:= E^{\internal}_{G_1}\sqcup\dotsb\sqcup E^{\internal}_{G_k}\sqcup E^{\internal}_G$, and the external edges by $ E^{\inp}_{\widetilde{G}}:= E^{\inp}_{G}$ and $ E^{\outp}_{\widetilde{G}}:= E^{\outp}_{G}$. Defining the incidence morphism $\Phi_{\widetilde{G}}$ is more complicated.

For an edge $e\in E^{\internal}_{G_i}$ we define $\Phi_{\widetilde{G}}(e):=\Phi_{G_i}(e)$. Let $e\in E^{\internal}_{G}$ be an edge with $\Phi_G(e)=(v_i,v_j)$ and let the vertices $v_i$ and $v_j$ of $G$ be decorated with $f_i \otimes_{\s} G_i \otimes_{\s} g_i$ and $f_j \otimes_{\s} G_j \otimes_{\s} g_j$, respectively. Via the local labeling of $G$ and the global labelings of $G_i$ and $G_j$, this edge connects two vertices, $w_i\in V_{G_i}$ and $w_j\in V_{G_j}$  of $\widetilde{G}$, as follows. Let $e_i$ be the edge in $ E^{\outp}_{G_i}$ with $f_i \circ \outp_{G_i}(e_i)=e$. Note that this composition is well defined since another representative, $f'_i \otimes_{\s} G'_i \otimes_{\s} g'_i$ of the decoration of $v_i$, will satisfy $\outp_{G'_i}=\sigma\outp_{G_i}$ and $f_i'=f_i\sigma^{-1}$ for some permutation $\sigma$, implying $f_i'\circ\outp_{G'_i}=f_i\circ\sigma^{-1}\circ\sigma\circ \outp_{G_i}=f_i\circ\outp_{G_i}$. By composing further with $\inp_{G_j}\circ g_j$, which by a similar argument also is well defined, we obtain an edge $e_j=\inp_{G_j}\circ g_j \circ f_i \circ \outp_{G_i}(e_i)\in E^{\inp}_{G_j}$. Let $w_i=\Phi_{G_i}(e_i)$ and $w_j=\Phi_{G_j}(e_j)$, then we set $\Phi_{\widetilde{G}}(e):=(w_i,w_j).$

For an external edge $e\in E^{\inp}_{\widetilde{G}}$ with $\Phi_{G}(e)=v_i$ let  $e_i=\inp_{G_i}\circ g_i(e)\in E_{G_i}^{\inp}$ and $w_i=\Phi_{G_i}(e_i)$. We define $\Phi_{\widetilde{G}}(e):=w_i$. Similarly for an external edge $e\in E^{\outp}_{\widetilde{G}}$ with $\Phi_{G}(e)=v_i$ let  $e_i=f_i \circ \outp_{G_i}(e)\in E_{G_i}$ and $w_i=\Phi_{G_i}(e_i)$. We define $\Phi_{\widetilde{G}}(e):=w_i$. By the same arguments as above this is well defined. The global labeling of the external edges is directly induced by the one of $G$, $\inp_{\widetilde{G}}:=\inp_G$ and $\outp_{\widetilde{G}}:=\outp_G$.

For three edges $e, e_i, e_j$ connected as above we will use the notation $e_{\inp}:=e_i$, $e_{\outp}:=e_j$, and $(e_i)_{\connecting}=(e_j)_{\connecting}:=e$. We will use the same notation for two connected edges.

\[
\econinout
\]

The elements $\wtp^a_b$ are defined as follows. If $\olp^a_b= f^a_b \otimes_\s p^a_b \otimes_\s  g^a_b$  is an element decorating a vertex $w\in V_{G_i}$ with $|E^{\outp}_w|=m$ and $|E^{\inp}_w|=n$, then $\wtp^a_b= \widetilde{f}^a_b \otimes_\s p^a_b \otimes_\s  \widetilde{g}^a_b$, where the bijections $\widetilde{f}^a_b\colon [m] \to E^{\outp}_w$ and  $\widetilde{g}^a_b\colon E^{\inp}_w\to [n]$ are given by
\[
\widetilde{f}^a_b(i)=
\begin{cases}
f^a_b(j) & \hspace{-1.6pt}\text{if $f^a_b(j)\in E^{\internal}_{G_i}$ }\\
f^a_b(j)_{\connecting} & \hspace{-1.6pt}\text{if $f^a_b(j)\in E^{\outp}_{G_i}$}
\end{cases}
\shs \text{and} \shs
\widetilde{g}^a_b(e)=
\begin{cases}
g^a_b(e) & \hspace{-1.6pt}\text{if $e \in E^{\internal}_{G_i}$ }\\
g^a_b(e_{\outp}) & \hspace{-1.6pt}\text{if $e \in E_{G}\cap (E^{\inp}_w)_{\connecting} $}.
\end{cases}
\]


\bibliographystyle{abbrv}
\bibliography{compatiblepoisson}

\begin{thebibliography}{10}

\bibitem{Arnold1988}
V.~I. Arnol'd, V.~V. Kozlov, and A.~I. Ne{\u\i}shtadt.
\newblock {\em Dynamical systems. {III}}, volume~3 of {\em Encyclopaedia of
  Mathematical Sciences}.
\newblock Springer-Verlag, Berlin, 1988.
\newblock Translated from the Russian by A. Iacob.

\bibitem{Cantrijn1991}
F.~Cantrijn and L.~A. Ibort.
\newblock Introduction to {P}oisson supermanifolds.
\newblock {\em Differential Geom. Appl.}, 1(2):133--152, 1991.

\bibitem{Cohen1976}
F.~R. Cohen, T.~J. Lada, and J.~P. May.
\newblock {\em The homology of iterated loop spaces}.
\newblock Springer-Verlag, Berlin, 1976.
\newblock Lecture Notes in Mathematics, Vol. 533.

\bibitem{Azc'arraga1996}
J.~A. de~Azc{\'a}rraga, A.~M. Perelomov, and J.~C. P{\'e}rez~Bueno.
\newblock New generalized {P}oisson structures.
\newblock {\em J. Phys. A}, 29(7):L151--L157, 1996.

\bibitem{Dotsenko2007a}
V.~Dotsenko.
\newblock An operadic approach to deformation quantization of compatible
  {P}oisson brackets. {I}.
\newblock {\em J. Gen. Lie Theory Appl.}, 1(2):107--115 (electronic), 2007.

\bibitem{Dotsenko2007}
V.~Dotsenko and A.~Khoroshkin.
\newblock Character formulas for the operad of two compatible brackets and for
  the bihamiltonian operad.
\newblock {\em Funktsional. Anal. i Prilozhen.}, 41(1):1--17, 2007.

\bibitem{Fresse2004}
B.~Fresse.
\newblock Koszul duality of operads and homology of partition posets.
\newblock In {\em Homotopy theory: relations with algebraic geometry, group
  cohomology, and algebraic $K$-theory}, volume 346 of {\em Contemp. Math.},
  pages 115--215. Amer. Math. Soc., Providence, RI, 2004.

\bibitem{Gan2003}
W.~L. Gan.
\newblock Koszul duality for dioperads.
\newblock {\em Math. Res. Lett.}, 10(1):109--124, 2003.

\bibitem{Ginzburg1994}
V.~Ginzburg and M.~Kapranov.
\newblock Koszul duality for operads.
\newblock {\em Duke Math. J.}, 76(1):203--272, 1994.

\bibitem{Kontsevich2003}
M.~Kontsevich.
\newblock Deformation quantization of {P}oisson manifolds.
\newblock {\em Lett. Math. Phys.}, 66(3):157--216, 2003.

\bibitem{Magri2003}
F.~Magri, G.~Falqui, and M.~Pedroni.
\newblock The method of {P}oisson pairs in the theory of nonlinear {PDE}s.
\newblock In {\em Direct and inverse methods in nonlinear evolution equations},
  volume 632 of {\em Lecture Notes in Phys.}, pages 85--136. Springer, Berlin,
  2003.

\bibitem{Markl1996}
M.~Markl.
\newblock Distributive laws and {K}oszulness.
\newblock {\em Ann. Inst. Fourier (Grenoble)}, 46(2):307--323, 1996.

\bibitem{Markl2006a}
M.~Markl, S.~A. Merkulov, and S.~Shadrin.
\newblock Wheeled props, graph complexes and the master equation.
\newblock arXiv:math/0610683v3 [math.AG], 2006.

\bibitem{Markl2002}
M.~Markl, S.~Shnider, and J.~Stasheff.
\newblock {\em Operads in algebra, topology and physics}, volume~96 of {\em
  Mathematical Surveys and Monographs}.
\newblock American Mathematical Society, Providence, RI, 2002.

\bibitem{Merkulov2004a}
S.~A. Merkulov.
\newblock Operads, deformation theory and {$F$}-manifolds.
\newblock In {\em Frobenius manifolds}, Aspects Math., E36, pages 213--251.
  Vieweg, Wiesbaden, 2004.

\bibitem{Merkulov2005}
S.~A. Merkulov.
\newblock Nijenhuis infinity and contractible differential graded manifolds.
\newblock {\em Compos. Math.}, 141(5):1238--1254, 2005.

\bibitem{Merkulov2006}
S.~A. Merkulov.
\newblock Prop profile of {P}oisson geometry.
\newblock {\em Comm. Math Phys.}, 262:117--135, 2006.

\bibitem{Merkulov2007a}
S.~A. Merkulov.
\newblock Permutahedra, {H}{K}{R} isomorphism and polydifferential
  {G}erstenhaber-{S}chack complex.
\newblock arXiv:0710.0821v1 [math.QA], 2007.

\bibitem{Merkulov2008}
S.~A. Merkulov.
\newblock Graph complexes with loops and wheels.
\newblock In Y.~Tschinkel and Y.~Zarhin, editors, {\em Algebra, Arithmetic and
  Geometry - Manin Festschrift}. Birkh\"auser, 2008.

\bibitem{Merkulov2008a}
S.~A. Merkulov.
\newblock Lectures on props, poisson geometry and deformation quantization.
\newblock In G.~Dito, J.~Lu, Y.~Maeda, and A.~Weinstein, editors, {\em Poisson
  Geometry in Mathematics and Physics}, Contemporary Mathematics. American
  Mathematical Society, Providence, RI, 2008.

\bibitem{Merkulov2007}
S.~A. Merkulov and B.~Vallette.
\newblock Deformation theory of representations of prop(erad)s.
\newblock arXiv:0707.0889v2 [math.QA], 2007.

\bibitem{Strohmayer2007a}
H.~Strohmayer.
\newblock Operads of compatible structures and weighted partitions.
\newblock arXiv:0706.2196v2 [math.AT], to appear in J. Pure Appl. Algebra,
  2007.

\bibitem{Vallette2007b}
B.~Vallette.
\newblock A {K}oszul duality for {PROP}s.
\newblock {\em Trans. Amer. Math. Soc.}, 359(10):4865--4943 (electronic), 2007.

\bibitem{Vinogradov1998}
A.~Vinogradov and M.~Vinogradov.
\newblock On multiple generalizations of {L}ie algebras and {P}oisson
  manifolds.
\newblock In {\em Secondary calculus and cohomological physics (Moscow, 1997)},
  volume 219 of {\em Contemp. Math.}, pages 273--287. Amer. Math. Soc.,
  Providence, RI, 1998.

\bibitem{Weinstein1998}
A.~Weinstein.
\newblock Poisson geometry.
\newblock {\em Differential Geom. Appl.}, 9(1-2):213--238, 1998.
\newblock Symplectic geometry.

\end{thebibliography}

\end{document}